\documentclass[a4paper,11pt]{article}

\usepackage[latin1]{inputenc}
\usepackage[T1]{fontenc}
\usepackage[english]{babel}
\usepackage{url}
\usepackage{amsmath,amssymb,amsthm}
\usepackage{dsfont,bbm}
\usepackage{slashbox,pict2e}
\usepackage{multirow}
\usepackage{pgf,tikz}   
\usepackage{tikz-cd}   
\usepackage{ifthen}\newboolean{color}
\usepackage{placeins}
\usetikzlibrary{positioning}
\usetikzlibrary{calc}
\usetikzlibrary{matrix}

\usepackage{graphicx}
\usepackage{subfigure}
\tikzset{%
	>=latex, 
	inner sep=0pt,%
	outer sep=2pt,%
	mark coordinate/.style={inner sep=0pt,outer sep=0pt,minimum size=3pt,
		fill=black,circle}%
}
\usepackage{listings}
\lstset{language=Matlab,%
	basicstyle=\small\ttfamily,
	numbers=left,
	numberstyle=\tiny,
	stepnumber=2,
	frame=lines
}

\newcommand{\cT}{{\mathcal{T}}}

\usepackage[shortlabels]{enumitem}
\theoremstyle{plain}

\newtheorem{proposition}{Proposition}[section]
\newtheorem{theorem}{Theorem}[section]
\newtheorem{lemma}{Lemma}[section]
\newtheorem*{lem*}{Lemma}

\newtheorem*{cor*}{Corollary}
\newtheorem*{teo*}{Theorem}
\theoremstyle{definition}
\newtheorem{definition}{Definition}[section]
\newtheorem{ass}{Assumption}

\newtheorem{remark}{Remark}[section]

\newcommand{\minitab}[2][l]{\begin{tabular}{#1}#2\end{tabular}}
\setcounter{secnumdepth}{3}
\setcounter{tocdepth}{3}

 \usepackage[hmarginratio=1:1]{geometry}
\bibliographystyle{plain}
\title{BDDC preconditioners for divergence free virtual element discretizations of the Stokes equations}
\author{Tommaso Bevilacqua, Simone Scacchi}

\begin{document}

\title{BDDC preconditioners for divergence free virtual element discretizations of the Stokes equations
}


\author{Tommaso Bevilacqua \footnote{Dipartimento di matematica, Universit\'a degli studi di Milano, Via Saldini 50, 20133 Milano, Italy; e-mail: tommaso.bevilacqua@unimi.it} \and Simone Scacchi \footnote{Dipartimento di matematica, Universit\'a  degli studi di Milano, Via Saldini 50, 20133 Milano, Italy; e-mail: simone.scacchi@unimi.it}}




\maketitle

\begin{abstract}
	The Virtual Element Method (VEM) is a new family of numerical methods for the approximation of partial differential equations, where the geometry of the polytopal mesh elements can be very general. The aim of this article is to extend the balancing domain decomposition by constraints (BDDC) preconditioner to the solution of the saddle-point linear system arising from a VEM discretization of the two-dimensional Stokes equations. Under suitable hypotesis on the choice of the primal unknowns, the preconditioned linear system results symmetric and positive definite, thus the preconditioned conjugate gradient method can be used for its solution. We provide a theoretical convergence analysis estimating the condition number of the preconditioned linear system. Several numerical experiments validate the theoretical estimates, showing the scalability and quasi-optimality of the method proposed. Moreover, the solver exhibits a robust behavior with respect to the shape of the polygonal mesh elements. We also show that a faster convergence could be achieved with an easy to implement coarse space, slightly larger than the minimal one covered by the theory.  \\

\textbf{Keywords:}\quad Virtual element method, divergence free discretization,  saddle-point linear system, domain decomposition preconditioner.
\end{abstract}

\section{Introduction}
\label{intro}
The balancing domain decomposition by constraints (BDDC) preconditioner is an iterative substructuring 
method for the solution of partial differential equations (PDEs), that belongs to the class of
nonoverlapping domain decomposition algorithms \cite{smith2004domain,toselli2006domain}. 
BDDC, first introduced in \cite{dohrmann2003} for elliptic problems, represents an evolution of 
the balancing Neumann-Neumann preconditioner \cite{toselli2006domain}.
We also remark that BDDC presents several features in common with the dual-primal finite element tearing and 
interconnecting (FETI-DP) algorithm. In particular, the BDDC and FETI-DP operators share almost 
the same eigenvalues \cite{li2006feti,brenner2007}, thus they exhibit analogous convergence properties. 
Both BDDC and FETI-DP have been successfully developed for finite and spectral element discretizations of several
physical problems governed by PDEs, see e.g. \cite{klawonn2006,zampini2014dual,dohrmann2016bddc,oh2017}.
In particular, regarding the Stokes equations, they have been studied in \cite{li2006bddc,litu2013}. 
In recent years, BDDC and FETI-DP algorithms have been also extended to various innovative discretizations techniques for PDEs,
such as Mortar discretizations \cite{kim2009}, discontinuous Galerkin methods \cite{dryja2007,canuto2014}, isogeometric analysis \cite{hofer2018,widlund2021}, weak Galerkin methods \cite{tu2018} and virtual element methods \cite{bertoluzza2017bddc,bertoluzza2020}.

The Virtual Element Method (VEM), introduced in the pioneering paper \cite{beirao2013basic}, represents a generalization of the finite element method (FEM), that can easily handle general polytopal meshes. The core idea behind VEM is to use approximated discrete bilinear forms, whose computation requires only the integration of polynomials on the element boundary and interior. The resulting discrete solution is conforming and the accuracy guaranteed by such discrete bilinear forms turns to be sufficient to achieve the correct order of convergence. The advantage of these methods is that they can be applied on a wide choice of general polygonal meshes without the need to integrate complex non-polynomial functions on the elements, keeping an high degree of accuracy.

In the VEM literature only a few studies have focused on the construction and analysis of preconditioners for VEM approximations of PDEs; see \cite{antoniettiMasV.2018,calvo.2018,calvo.2019,dassiS.2020b}). BDDC for VEM discretizations of scalar elliptic problems have been first introduced in \cite{bertoluzza2017bddc,bertoluzza2020} and then extended to mixed formulations of scalar elliptic equations in \cite{dassiZS2022}.
To our knowledge, the development of effective non-overlapping domain decomposition preconditioners for VEM discretizations of the Stokes equations is still an open problem. 

The novelty of the present study is to develop a BDDC preconditioner for the divergence free VEM discretization of the two-dimensional Stokes equations introduced in \cite{da2017divergence}. Our algorithm represents an extension to VEM of the BDDC preconditioner proposed in \cite{li2006bddc} for FEM discretizations of the Stokes equations with discontinuous pressure spaces.
We prove a convergence rate estimate of the preconditioned system, independent of the number of subdomains and polylogarithmic 
with respect to the ratio $H/h$, where $H$ denotes the subdomain size and $h$ the mesh size. 
Such an estimate yields the scalability and quasi-optimality of the resulting algorithm.
Several numerical tests confirm the theoretical estimate and show the robustness of the solver with respect to different
polygonal meshes.

The paper is organized as follows: in Section 2 we introduce the continuous problem and its variational formulation; in Section 3  we describe the VEM discretization; in Section 4 we introduce the domain decomposition tecnique and the BDDC preconditioner; in Sections 5 and 6 we describe the theoretical aspects, while in Section 7 we report several numerical results; finally in Section 8 we draw the conclusions.

\section{Continuous problem}
\label{sec:1}
Let $\Omega \subseteq \mathbb{R}^2$, with $\Gamma = \partial \Omega$, and consider the stationary Stokes problem on $\Omega$ with homogeneous Dirichet boundary conditions: 
\begin{equation}\label{ContinuousProblem}
	\begin{cases}
		\text{Find }(\mathbf{u},p)\text{ such that} \\
		-\nu\mathbf\Delta\mathbf{u} - \nabla p = \mathbf{f} \qquad &\text{ in } \mathit{\Omega} \\
		\text{div }\mathbf{u} = 0  \qquad &\text{ in } \mathit{\Omega} \\
		\mathbf{u}=0 \qquad  &\text{ on } \mathit{\Gamma},
	\end{cases}
\end{equation}
where $\mathbf{u}$ and $p$ are the velocity and the pressure fields, respectively. Furthermore $\mathbf{\Delta}$, div and $\mathbf{\nabla}$ denote the vector Laplacian, the divergence and the gradient operators. Finally, $\mathbf{f}$ represents the external force, while $\nu > 0$ is the viscosity.

Let us consider the spaces:
\begin{align}
	\mathbf{V}:=[H^{1}_{0}(\mathit{\Omega})]^2,\qquad Q:=L^{2}_{0}(\mathit{\Omega})=\bigg \{ q\in L^{2}(\mathit{\Omega})\quad s.t. \quad \int_{\mathit{\Omega}} q \text{ d}\Omega=0 \bigg \}
\end{align}
with norms:
\begin{align}
	\Vert\mathbf{v}\Vert_{1}:=\Vert\mathbf{v}\Vert_{[H^{1}(\mathit{\Omega})]^2}, \quad \Vert q\Vert_{Q}:=\Vert q\Vert_{L^{2}(\mathit{\Omega})}.
\end{align}

We assume $\mathbf{f} \in [H^{-1}(\mathit{\Omega})]^2$, and $\nu\in L^{\infty}(\mathit{\Omega})$ uniformly positive in $\mathit{\Omega}$. Let the bilinear forms $a(\cdot,\cdot): \mathbf{V} \times \mathbf{V} \rightarrow \mathbb{R}$ and $b: \mathbf{V} \times Q \rightarrow \mathbb{R}$ be defined as:
\begin{equation}\label{aFormCont}
	a(\mathbf{u},\mathbf{v}) := \int_{\Omega}\nu \mathbf{\nabla u} : \mathbf{\nabla v}\text{ d}\Omega \qquad \text{for all } \mathbf{u},\mathbf{v} \in \mathbf{V}
\end{equation}
\begin{equation}\label{bFormCont}
	b(\mathbf{v},q) := \int_{\Omega} \text{div } \mathbf{v} q \text{ d}\Omega \qquad \text{for all } \mathbf{u} \in \mathbf{V}, q \in Q.
\end{equation}
Then a standard variational formulation of problem \eqref{ContinuousProblem} reads:
\begin{equation}\label{VarForm}
	\begin{cases}
		\text{find } (\mathbf{u},p) \in \mathbf{V} \times Q \text{ such that} \\
		a(\mathbf{u},\mathbf{v})+b(\mathbf{v},p)=(\mathbf{f},\mathbf{v}) & \text{for all } \mathbf{v} \in \mathbf{V}, \\
		b(\mathbf{u},q)=0 & \text{for all } q \in Q,
	\end{cases}
\end{equation}
where
\begin{align*}
	(\mathbf{f},\mathbf{v}):=\int_{\Omega} \mathbf{f} \cdot \mathbf{v}\text{ d}\Omega.
\end{align*}
It is well-known that:
\begin{itemize}
	\item $a(\cdot,\cdot)$ and $b(\cdot,\cdot)$ are continuous, \textit{i.e.} 
	\[
	\begin{array}{llll}
		|a(\mathbf{u},\mathbf{v})| & \leq & \Vert a \Vert \Vert\mathbf{u}\Vert_1 \Vert\mathbf{v}\Vert_1 & \quad \text{for all } \mathbf{u},\mathbf{v} \in \mathbf{V} \vspace{0.2cm},\\
		|b(\mathbf{v},q)| & \leq & \Vert b\Vert  \Vert\mathbf{v}\Vert_1 \Vert q\Vert_Q & \quad \text{for all } \mathbf{v} \in \mathbf{V} \text{and } q \in Q,
	\end{array}
	\]
	where $\Vert a \Vert$ and $\Vert b \Vert$ are the usual norm of the two bilinear forms;
	\item $a(\cdot,\cdot)$ is coercive \textit{i.e.}, there exists a positive constant $\alpha$ such that 
	\[
	|a(\mathbf{v},\mathbf{v})| \geq \alpha \Vert \mathbf{v} \Vert_1^2  \quad \text{for all } \mathbf{v} \in \mathbf{V};
	\]
	\item the bilinear form $b(\cdot,\cdot)$ satisfies the inf-sup condition \cite{boffi2013mixed}, \textit{i.e.} 
	\begin{equation}\label{infsup}
		\exists \beta>0 \text{ such that} \quad \sup_{\mathbf{v}\in \mathbf{V}, \mathbf{v} \neq \mathbf{0}} \frac{|b(\mathbf{v},q)|}{\Vert\mathbf{v}\Vert_1} \geq \beta \Vert q\Vert_Q \quad\text{for all } q \in Q.
	\end{equation}
\end{itemize}
Therefore, problem \eqref{VarForm} has a unique solution $(\mathbf{u},p) \in \mathbf{V} \times Q$ such that 
\begin{align}
	||\mathbf{u}||_1 + ||p||_Q \leq C ||\mathbf{f}||_{H^{-1}(\mathit{\Omega})},
\end{align}
where the constant $C$ depends only on $\mathit{\Omega}$ and $\nu$; see \cite{boffi2013mixed}.

\section{Virtual element discretization}
\label{sec:2}
We present here the discretization of problem \eqref{ContinuousProblem}, based on the virtual element space introduced in \cite{da2017divergence}, that is designed to solve a Stokes-like problem element-wise.
In particular we will use the reduced space presented in section 5 of \cite{da2017divergence}, that, exploiting the divergence free property of the solution, allows to save a lot of degrees of freedom especially when the polynomial degree $k$ is large. We recall here the definition of the local spaces.
Let $\{\cT_h\}_h$ be a sequence of triangulations of $\Omega$ into general polygonal elements $K$ with 
\begin{align*}
	h_K:=\text{diameter}(K), \quad h:=\sup_{K\in \cT_h}h_K.
\end{align*}

We suppose that, for all $h$, each element $K\in \cT_h$ satisfies the following assumptions:
\begin{itemize}
	\item$(\mathbf{A1})$ $K$ is star-shaped with respect to a ball of radius $\geq \gamma h_k$,
	\item $(\mathbf{A2})$ the distance between any two vertices of K is $\geq c h_K$,
	\item $(\mathbf{A3})$ the triangulation $\cT_h$ is quasi-uniform, \textit{i.e.} there exist positive constants $c_0,\,c_1$ such that for any two elements $K$ and $K'$ in $\cT_h$ we have $c_0\leq h_K/h_{K'}\leq c_1$.
\end{itemize}
where $\gamma$ and $c$ are positive constants. 
\begin{remark}
	These hypotheses could be weakened as in \cite{beirao2013basic}, for example assuming that every $K$ is a union of a finite (and uniformly bounded) number of star-shaped domains, each satisfying ($\mathbf{A1}$).
\end{remark}

We also assume that the scalar viscosity field $\nu$ is piecewise constant with respect to the decomposition $\cT_h$, i.e. $\nu$ is constant on each polygon $K \in \cT_h$.

For $k \in \mathbb{N}$, let us define the spaces:
\begin{itemize}
	\item $\mathbb{P}_k(K)$ the set of polynomials on $K$ of degree $\leq k$,
	\item $\mathbb{B}_k(K):=\{v \in C^0(\partial K ) \text{ s.t. } v_{|e} \in \mathbb{P}_k(e) \quad \forall \text{ edge } e \in \partial K \}$,
	\item $\mathit{G}_k(K):=\nabla(\mathbb{P}_{k+1}(K)) \subseteq [\mathbb{P}_{k}(K)]^2$,
	\item $\mathit{G}_k(K)^\perp \subseteq [\mathbb{P}_{k}(K)]^2$ the $L^2$-orthogonal complement to $\mathit{G}_k(K)$.
\end{itemize}  
On each element $K \in \mathit{T}_h$ we define, for $k\geq2$, the following finite dimensional local virtual element spaces:
\begin{equation}\label{NewLoc}
	\begin{split}
		\mathbf{\widehat{V}}_h^K := \bigg\{ \mathbf{v} \in [H^1(K)]^2 \text{ s.t. } \mathbf{v}_{| \partial K} \in \mathbb{B}_{k}(\partial K)]^2,\\
		\begin{cases} -\nu\mathbf\Delta\mathbf{v} - \nabla s \in\mathit{G}_{k-2}(K)^\perp, \\ \text{div } \mathbf{v} \in \mathbb{P}_{0}(K),\end{cases} \text{for some } s \in L^2(K)\bigg\}
	\end{split}
\end{equation}
and 
\begin{align}
	Q_h^K:=\mathbb{P}_{0}(K).
\end{align}

Now it is possible to introduce suitable sets of degrees of freedom for the local approximations fields.\\
Given a function $\mathbf{v} \in \mathbf{\widehat{V}}_h^K$ we take the following linear operators $\mathbf{D_{\widehat{V}}}$, split into three subsets:
\begin{itemize}
	\item $\mathbf{D_{\widehat{V}}1}$: the values of $\mathbf{v}$ at the vertices of the polygon $K$,
	\item $\mathbf{D_{\widehat{V}}2}$: the values of $\mathbf{v}$ at $k-1$ distinct points of every edge $e\in \partial K$ (for the implementation we will take the $k-1$ internal points of the $(k+1)$-Gauss-Lobatto quadrature rule in $e$),
	\item $\mathbf{D_{\widehat{V}}3}$: the moments of the values of $\mathbf{v}$
	\begin{align*}
		\int_{K}\mathbf{v} \cdot \mathbf{g}^\perp_{k-2}\text{ d}K \qquad \text{for all } \mathbf{g}^\perp_{k-2} \in \mathbf{G}_{k-2}(K)^\perp
	\end{align*}
\end{itemize} 
Furthermore, for the local pressure, given $q\in Q^K_h$, we consider the linear operators $\mathbf{D_Q}$:
\begin{itemize}
	\item $\mathbf{D_Q}$: the moment
	\begin{align*}
		\int_{K}q \text{ d}K.
	\end{align*}
\end{itemize}
Since $\mathbf{D_{\widehat{V}}}$ and $\mathbf{D_Q}$ are unisolvent respectively of 
$\mathbf{\widehat{V}}_h^K$ and $Q_h^K$, we can define the global virtual element spaces:
\begin{align}\label{globdiscvel}
	\mathbf{\widehat{V}}_h:=\{\mathbf{v}\in [H^{1}_{0}(\mathit{\Omega})]^2\quad \text{s.t.}\quad \mathbf{v}_{|K} \in \mathbf{\widehat{V}}^K_h\quad \text{for all } K\in \cT_h\}
\end{align} 
and
\begin{align}\label{globdiscpre}
	Q_h:=\{q\in L^{2}_{0}(\mathit{\Omega}) \quad \text{s.t.}\quad q_{|K} \in Q^K_h\quad \text{for all } K\in \cT_h\},
\end{align} 
with obvious associated sets of global degrees of freedom.

\subsection{Discrete problem}
\label{sec:3}
Referring to \cite{da2017divergence}, we can now state the discrete virtual element problem 
\begin{equation}\label{DiscreteProblem}
	\begin{cases}
		\text{find }(\mathbf{u}_h,p_h)\text{ such that} \\
		a_h(\mathbf{u}_h,\mathbf{v}_h)+b(\mathbf{v}_h,p_h)=(\mathbf{f}_h,\mathbf{v}_h)& \quad \text{for all }\mathbf{v}_h \in \mathbf{\widehat{V}}_h\\
		b(\mathbf{u}_h,q_h) = 0&\quad \text{for all }q_h \in Q_h
	\end{cases}
\end{equation}
By construction the discrete bilinear form $a_h(\cdot,\cdot)$ is stable (uniformly) with respect to the $\mathbf{V}$ norm and also obviously the bilinear form $b(\cdot,\cdot)$. Therefore, to prove the existence and uniqueness of the solution of the problem \eqref{DiscreteProblem} is necessary only a suitable inf-sup condition. For our work, we will only need this condition for the subdomains in which $\Omega$ will be divided into. In this way the local subdomains problem, as weel as the global one, will be well posed.
The proof of the following inf-sup condition could be found in \cite{da2017divergence}.
\begin{proposition}
	Given the discrete spaces $\mathbf{\widehat{V}}_h$ and $Q_h$ defined in (\ref{globdiscvel}) and (\ref{globdiscpre}), there exists a positive $\tilde{\beta}$, independent of h, such that:
	\begin{equation}\label{infsupdisc}
		\sup_{\mathbf{v}_h\in \mathbf{\widehat{V}}_h, \mathbf{v}_h \neq \mathbf{0}} \frac{|b(\mathbf{v}_h,q_h)|}{\Vert\mathbf{v}_h\Vert_1} \geq \tilde{\beta} \Vert q_h\Vert_Q \quad\text{for all } q_h \in Q_h.
	\end{equation}
\end{proposition}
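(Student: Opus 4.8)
The plan is to establish \eqref{infsupdisc} by the classical Fortin operator argument, leveraging the continuous inf-sup condition \eqref{infsup}. Recall that a sufficient condition for the discrete inf-sup to hold with a constant $\tilde\beta$ independent of $h$ is the existence of a linear \emph{Fortin operator} $\Pi_h\colon \mathbf{V}\to\mathbf{\widehat{V}}_h$ that is uniformly continuous, $\Vert\Pi_h\mathbf{v}\Vert_1\le C_F\Vert\mathbf{v}\Vert_1$ for all $\mathbf{v}\in\mathbf{V}$ with $C_F$ independent of $h$, and that preserves the relevant divergence pairing, $b(\mathbf{v}-\Pi_h\mathbf{v},q_h)=0$ for all $\mathbf{v}\in\mathbf{V}$ and all $q_h\in Q_h$. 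Granting such an operator, for a fixed $q_h\in Q_h\subset Q$ one picks $\mathbf{v}\in\mathbf{V}$ realizing \eqref{infsup}, and then $b(\Pi_h\mathbf{v},q_h)=b(\mathbf{v},q_h)\ge\beta\Vert q_h\Vert_Q\Vert\mathbf{v}\Vert_1\ge(\beta/C_F)\Vert q_h\Vert_Q\Vert\Pi_h\mathbf{v}\Vert_1$, which yields \eqref{infsupdisc} with $\tilde\beta=\beta/C_F$.

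The first step is to simplify the divergence-preserving requirement. Since $Q_h$ consists of functions that are constant on each element $K$, the divergence theorem gives $b(\mathbf{w},q_h)=\sum_{K\in\cT_h}q_h|_K\int_K\mathrm{div}\,\mathbf{w}\,\mathrm{d}K=\sum_{K\in\cT_h}q_h|_K\int_{\partial K}\mathbf{w}\cdot\mathbf{n}\,\mathrm{d}s$. Hence the condition $b(\mathbf{v}-\Pi_h\mathbf{v},q_h)=0$ for all $q_h\in Q_h$ is equivalent to matching the mean divergence element by element, that is $\int_{\partial K}(\mathbf{v}-\Pi_h\mathbf{v})\cdot\mathbf{n}\,\mathrm{d}s=0$ for every $K\in\cT_h$. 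This is a single scalar constraint per element, and it involves only the trace of $\Pi_h\mathbf{v}$ on $\partial K$, hence only the boundary degrees of freedom $\mathbf{D_{\widehat{V}}1}$ and $\mathbf{D_{\widehat{V}}2}$.

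I would then build $\Pi_h$ explicitly through the degrees of freedom of $\mathbf{\widehat{V}}_h$. Because $\mathbf{v}\in[H^1(\Omega)]^2$ does not admit bounded point evaluations in two dimensions, the naive VEM interpolant is not available, so I would first apply a regularization of Cl\'ement/Scott--Zhang type to replace point values by suitable local averages, and then fix the boundary DOFs $\mathbf{D_{\widehat{V}}1}$, $\mathbf{D_{\widehat{V}}2}$ and the interior moments $\mathbf{D_{\widehat{V}}3}$ of $\Pi_h\mathbf{v}$ from the regularized field. The crucial observation is that the edge traces of functions in $\mathbf{\widehat{V}}_h^K$ are polynomials of degree $k$ on each edge, and the $k+1$ boundary nodes per edge (two vertices plus the $k-1$ internal Gauss--Lobatto points) are unisolvent for $\mathbb{P}_k(e)$; combined with the freedom in the interior moments, this is more than enough to enforce, after a local correction, the flux constraint $\int_{\partial K}(\mathbf{v}-\Pi_h\mathbf{v})\cdot\mathbf{n}=0$ on each $K$ while retaining control of the $H^1$ norm.

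The main obstacle, and the technical heart of the argument, is the uniform continuity bound $\Vert\Pi_h\mathbf{v}\Vert_1\le C_F\Vert\mathbf{v}\Vert_1$ with $C_F$ independent of $h$ and of the particular polygonal shapes. This rests on a scaling argument reducing to a reference configuration, on trace and inverse inequalities on the polygons, and on norm equivalences between a VEM function and its vector of degrees of freedom; all of these hold uniformly precisely because of the mesh regularity assumptions $(\mathbf{A1})$--$(\mathbf{A3})$ (star-shapedness with respect to a ball of comparable radius, minimal vertex separation, and quasi-uniformity). Reconciling the regularization, needed to cope with the low $H^1$ regularity, with the exact element-wise flux preservation is the delicate point, since the local correction used to restore the flux must itself be shown to be bounded in $H^1$ uniformly in $h$.
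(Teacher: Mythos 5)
The paper does not actually prove this proposition: it states that ``the proof of the following inf-sup condition could be found in \cite{da2017divergence}'' and moves on. The proof in that reference is precisely the Fortin-operator argument you outline --- construct a quasi-interpolant $\Pi_h:\mathbf{V}\to\mathbf{\widehat{V}}_h$ that is uniformly $H^1$-stable and preserves the pairing with $Q_h$, then transfer the continuous inf-sup \eqref{infsup}. Your reduction of the divergence-preservation condition to the element-wise flux constraint $\int_{\partial K}(\mathbf{v}-\Pi_h\mathbf{v})\cdot\mathbf{n}=0$ is correct and is exactly the simplification that makes the construction tractable here: since functions in $\mathbf{\widehat{V}}_h^K$ have constant divergence and $Q_h^K=\mathbb{P}_0(K)$, one scalar boundary constraint per element suffices. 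So in strategy your proposal coincides with the source the paper defers to.

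That said, as written the proposal is a plan rather than a proof: the step that carries all the weight --- constructing the Cl\'ement/Scott--Zhang-regularized VEM interpolant on general polygons, performing the local flux correction, and proving $\Vert\Pi_h\mathbf{v}\Vert_1\le C_F\Vert\mathbf{v}\Vert_1$ with $C_F$ depending only on the constants in $(\mathbf{A1})$--$(\mathbf{A3})$ --- is asserted to follow from ``scaling, trace and inverse inequalities, and norm equivalences'' without being carried out, and you yourself flag the compatibility of the regularization with exact flux preservation as the delicate point. Concretely, one must exhibit the correction (e.g.\ a boundary polynomial perturbation on a single edge of $K$, or an interior adjustment) and bound its $H^1$ norm by the flux defect $\bigl|\int_{\partial K}(\mathbf{v}-\widetilde{\Pi}_h\mathbf{v})\cdot\mathbf{n}\bigr|$ times an $h$-independent factor; without that estimate the claimed uniform constant $\tilde\beta=\beta/C_F$ is not established. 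Since the paper itself outsources exactly this work to \cite{da2017divergence}, your sketch is at the same level of completeness as the paper's ``proof,'' but it does not constitute an independent verification of the statement.
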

A consequence of the previous proposition is the following statement.
\begin{theorem}
	Problem \eqref{DiscreteProblem} has a unique solution $(\mathbf{u}_h,p_h)\in \mathbf{\widehat{V}}_h\times Q_h$, verifying the estimate
	\begin{align}
		\Vert \mathbf{u}_h \Vert_1 + \Vert p_h \Vert_Q \leq C\Vert \mathbf{f}\Vert_0\text{.}
	\end{align}
\end{theorem}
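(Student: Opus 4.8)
The statement is the discrete counterpart of the well-posedness of the continuous saddle-point problem \eqref{VarForm}, so the plan is to apply the standard Babuška--Brezzi theory for mixed variational problems (see \cite{boffi2013mixed}) to the discrete pair $(\mathbf{\widehat{V}}_h, Q_h)$. The three structural ingredients required by that theory are already available in the excerpt: the discrete form $a_h(\cdot,\cdot)$ is continuous and uniformly coercive on $\mathbf{\widehat{V}}_h$ (its stability with respect to the $\mathbf{V}$ norm was recorded right after \eqref{DiscreteProblem}); the form $b(\cdot,\cdot)$ is continuous with the same constant $\Vert b\Vert$ as in the continuous case, since it is computed exactly; and the discrete inf-sup condition \eqref{infsupdisc} holds with an $h$-independent constant $\tilde{\beta}$ by the preceding Proposition.

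First I would record these three facts together with their constants, stressing that both the coercivity constant $\alpha$ of $a_h$ and $\tilde\beta$ are independent of $h$. In particular, because $\operatorname{div}\mathbf{v}_h$ is piecewise constant for every $\mathbf{v}_h\in\mathbf{\widehat{V}}_h$ and $Q_h=\{q : q|_K\in\mathbb{P}_0(K)\}$, the discrete kernel $\mathbf{Z}_h:=\{\mathbf{v}_h\in\mathbf{\widehat{V}}_h : b(\mathbf{v}_h,q_h)=0 \ \forall q_h\in Q_h\}$ is exactly the space of discretely divergence-free velocities; coercivity of $a_h$ holds on all of $\mathbf{\widehat{V}}_h$, hence a fortiori on $\mathbf{Z}_h$, which is all the abstract theorem requires for the coercivity hypothesis.

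Next, with these hypotheses verified, the Brezzi theorem yields at once the existence and uniqueness of $(\mathbf{u}_h,p_h)\in\mathbf{\widehat{V}}_h\times Q_h$ together with the a priori bound
\[
\Vert \mathbf{u}_h\Vert_1 + \Vert p_h\Vert_Q \leq C\,\Vert F_h\Vert_{\mathbf{\widehat{V}}_h'},
\]
where $F_h$ is the load functional $\mathbf{v}_h\mapsto(\mathbf{f}_h,\mathbf{v}_h)$ and the constant $C$ is a rational expression in $\Vert a_h\Vert$, $\alpha$, $\Vert b\Vert$ and $\tilde\beta$; since each of these is $h$-independent, so is $C$. The final step is to convert the dual norm on the right into $\Vert\mathbf{f}\Vert_0$: I would estimate $|(\mathbf{f}_h,\mathbf{v}_h)|\le\Vert\mathbf{f}\Vert_0\Vert\mathbf{v}_h\Vert_0\le\Vert\mathbf{f}\Vert_0\Vert\mathbf{v}_h\Vert_1$, using the $L^2$-stability of the discrete load $\mathbf{f}_h$ and the trivial inequality $\Vert\cdot\Vert_0\le\Vert\cdot\Vert_1$, so that $\Vert F_h\Vert_{\mathbf{\widehat{V}}_h'}\le\Vert\mathbf{f}\Vert_0$ and the claimed estimate follows.

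I do not expect a genuine obstacle here: the essential difficulty, namely the $h$-uniform inf-sup stability of the virtual element pair, has been isolated into the Proposition and quoted from \cite{da2017divergence}. The only points deserving care are verifying that the continuity and coercivity constants of the \emph{approximate} form $a_h$ are truly independent of $h$ (which is the content of the VEM stability assumption) and controlling the discrete load $\mathbf{f}_h$ by $\Vert\mathbf{f}\Vert_0$; both are routine within the VEM framework.
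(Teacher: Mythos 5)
Your argument is correct and is essentially the route the paper itself takes: the paper states this theorem as a direct consequence of the discrete inf-sup Proposition together with the uniform stability of $a_h(\cdot,\cdot)$ and $b(\cdot,\cdot)$ noted just before it, deferring the details to the standard Babu\v{s}ka--Brezzi theory and to the reference \cite{da2017divergence}. Your write-up simply makes explicit the verification of the hypotheses and the control of the discrete load functional, which is exactly what the paper leaves implicit.
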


We have also a convergence result

\begin{theorem}
	Let $(\mathbf{u},p)\in \mathbf{V}\times Q$ be the solution of problem \eqref{VarForm} and $(\mathbf{u}_h,p_h)\in \mathbf{\widehat{V}}_h\times Q_h$ be the solution of problem \eqref{DiscreteProblem}. Then it holds
	\begin{align}
		\Vert \mathbf{u}-\mathbf{u}_h \Vert_1 \leq C h^k(|\mathbf{f}|_{k-1}+|\mathbf{u}|_{k+1})
	\end{align}
	and
	\begin{align}
		\Vert p-p_h \Vert_Q \leq C h^k(|\mathbf{f}|_{k-1}+|\mathbf{u}|_{k+1}+|p|_{k})\text{.}
	\end{align}
\end{theorem}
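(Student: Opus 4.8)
The plan is to follow the classical error analysis for mixed VEM discretizations of saddle-point problems, crucially exploiting the divergence-free structure of the space $\mathbf{\widehat{V}}_h$. First I would observe that, since $\mathrm{div}\,\mathbf{u}_h \in \mathbb{P}_0(K)$ elementwise and $b(\mathbf{u}_h,q_h)=0$ for all $q_h \in Q_h$, the function $\mathrm{div}\,\mathbf{u}_h$ is piecewise constant with vanishing global mean (by the homogeneous boundary conditions), hence it is an admissible test pressure; choosing $q_h=\mathrm{div}\,\mathbf{u}_h$ forces $\mathrm{div}\,\mathbf{u}_h\equiv 0$ pointwise. Thus the discrete velocity is exactly, not merely discretely, solenoidal, so the discrete kernel $\mathbf{Z}_h := \{\mathbf{v}_h \in \mathbf{\widehat{V}}_h : b(\mathbf{v}_h,q_h)=0 \ \forall q_h \in Q_h\}$ is contained in the continuous kernel. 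This decouples the velocity error from the pressure and explains why the velocity bound carries no $|p|_k$ contribution.

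For the velocity estimate I would work entirely on the kernel and use a Strang-type argument. Let $\mathbf{u}_I \in \mathbf{Z}_h$ be the VEM interpolant of $\mathbf{u}$ determined by the degrees of freedom introduced above, and let $\mathbf{u}_\pi$ be the elementwise $L^2$-projection of $\mathbf{u}$ onto $[\mathbb{P}_k(K)]^2$. Setting $\mathbf{w}_h=\mathbf{u}_h-\mathbf{u}_I\in\mathbf{Z}_h$ and using the coercivity of $a_h$, I would start from $\alpha\|\mathbf{w}_h\|_1^2 \le a_h(\mathbf{w}_h,\mathbf{w}_h)$, use the discrete momentum equation together with $b(\mathbf{w}_h,p_h)=0$ to replace $a_h(\mathbf{u}_h,\mathbf{w}_h)$ by $(\mathbf{f}_h,\mathbf{w}_h)$, insert $\mathbf{u}_\pi$, and exploit the polynomial consistency $a_h(\mathbf{u}_\pi,\mathbf{w}_h)=a(\mathbf{u}_\pi,\mathbf{w}_h)$ together with $a(\mathbf{u},\mathbf{w}_h)=(\mathbf{f},\mathbf{w}_h)$ (again since $\mathbf{w}_h$ is solenoidal). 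This collapses to
\[
\|\mathbf{u}-\mathbf{u}_h\|_1 \leq C\big(\|\mathbf{u}-\mathbf{u}_I\|_1 + \|\mathbf{u}-\mathbf{u}_\pi\|_{1,h} + \mathcal{E}_f\big),
\]
where $\|\cdot\|_{1,h}$ is the broken $H^1$-norm and $\mathcal{E}_f$ collects the loading inconsistency $(\mathbf{f}_h-\mathbf{f},\mathbf{w}_h)$. The first two terms are controlled by the standard VEM interpolation and polynomial approximation estimates under $(\mathbf{A1})$--$(\mathbf{A3})$, both of order $h^k|\mathbf{u}|_{k+1}$, while $\mathcal{E}_f$ yields the $h^k|\mathbf{f}|_{k-1}$ term.

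For the pressure I would invoke the discrete inf-sup condition of the Proposition. Writing $p_I$ for the $L^2$-projection of $p$ onto the piecewise-constant space $Q_h$, I would estimate
\[
\tilde{\beta}\,\|p_I - p_h\|_Q \leq \sup_{\mathbf{v}_h \neq \mathbf{0}} \frac{b(\mathbf{v}_h,\, p_I - p_h)}{\|\mathbf{v}_h\|_1},
\]
rewrite $b(\mathbf{v}_h,p_h)$ through the discrete momentum equation and $b(\mathbf{v}_h,p)$ through the continuous one, and bound the resulting expression by the already-controlled velocity error, the consistency error of $a_h$, and the loading error. A triangle inequality with the projection estimate $\|p-p_I\|_Q \leq C h^k|p|_k$ then gives the stated pressure bound, the extra $|p|_k$ seminorm entering precisely through $\|p-p_I\|_Q$.

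I expect the main obstacle to be the consistency analysis of the non-computable form $a_h$: bounding $a_h(\mathbf{u}_I,\mathbf{v}_h)-a(\mathbf{u},\mathbf{v}_h)$ on the discrete kernel requires inserting the elementwise polynomial projection and combining the polynomial exactness of $a_h$ with its stability bounds, while simultaneously tracking the approximation orders so that both errors emerge at the sharp order $h^k$. The treatment of the discrete loading $\mathbf{f}_h$ and the verification that the reduced space of \cite{da2017divergence} still contains enough polynomials for the projection estimates are the other delicate points; the cleanest route is to lean on the corresponding local estimates of \cite{da2017divergence}, on whose space the present discretization is built.
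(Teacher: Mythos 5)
The paper itself offers no proof of this theorem: it is imported verbatim from the divergence-free VEM reference \cite{da2017divergence} (``Referring to \cite{da2017divergence}, we can now state\ldots''), so there is no in-paper argument to compare against. Your outline is a faithful reconstruction of the standard proof from that reference: the observation that $b(\mathbf{u}_h,q_h)=0$ together with $\text{div}\,\mathbf{u}_h\in\mathbb{P}_0(K)$ and the zero-mean constraint forces $\text{div}\,\mathbf{u}_h\equiv 0$, a Strang-type estimate on the discrete kernel for the velocity (which is exactly why no $|p|_k$ appears in the velocity bound), and the discrete inf-sup condition for the pressure.

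There is, however, one step that fails as written. You take $p_I$ to be the $L^2$-projection of $p$ onto $Q_h$, which for the reduced space of Section~3 is the space of piecewise constants, and you invoke $\Vert p-p_I\Vert_Q\le Ch^k|p|_k$. For $k\ge 2$ this is false: the best piecewise-constant approximation of a non-constant $p$ is only $O(h\,|p|_1)$, so your final triangle inequality can deliver at most a first-order bound for the raw piecewise-constant $p_h$. In \cite{da2017divergence} the $O(h^k)$ pressure estimate is proved for the \emph{full} scheme, whose pressure space is discontinuous $\mathbb{P}_{k-1}(K)$; for the reduced scheme one uses that its velocity coincides with that of the full scheme and recovers the $\mathbb{P}_{k-1}$ pressure by a local elementwise post-processing, and it is that recovered pressure which converges with order $h^k$. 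Your argument needs this recovery step (equivalently, the theorem must be read as referring to the post-processed pressure). A smaller point: the degree-of-freedom interpolant $\mathbf{u}_I$ is not automatically in the discrete kernel $\mathbf{Z}_h$, since $\int_{\partial K}\mathbf{u}_I\cdot\mathbf{n}$ is only a Gauss--Lobatto quadrature of $\int_{\partial K}\mathbf{u}\cdot\mathbf{n}$; one must either verify that the interpolation operator commutes with the projected divergence or correct $\mathbf{u}_I$ by a Fortin-type argument based on the discrete inf-sup condition before restricting the error equation to $\mathbf{Z}_h$.
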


\section{Construction of the BDDC preconditioner}
\label{sec:4}
In this section, we first divide the domain $\Omega$ into subdomains and introduce appropriate function spaces, paragraph 4.1. Then, in paragraph 4.2, we show how the global interface saddle-point problem takes form and then in 4.3 we define the BDDC preconditioner that allows us to use a preconditioned conjugate gradient method (PCG) for its solution.

\subsection{Domain decomposition}
\label{sec:5}
We decompose the domain $\Omega$ into $N$ non-overlapping subdomains $\Omega_i, i=1,2,...N$, of characteristic diameter $H$. Each subdomain is a union of shape regular elements and the nodes on the boundaries of neighboring subdomain match across the interface $\Gamma = (\cup\partial\Omega_i)\setminus \partial\Omega$; we define also $\Gamma_i = \partial\Omega_i \cap \Gamma$ as the interface of an individual subdomain $\Omega_i$.
According to [5], where more details could be found, we recall two requirements on the  subdomain partition:
\begin{itemize}
	\item \textbf{(S1)} Each subdomain $\Omega_i$ is the union of polygonal elements of the triangulation $\cT_h$ and the number of polygons forming an individual subdomain is uniformly bounded;
	\item \textbf{(S2)} If a face of a subdomain intersects $\partial\Omega$, then the measure of this set is comparable to that of $\partial\Omega_i$. Similarly, if an edge of a subdomain intersects $\partial\Omega$, the length of this intersection is bounded from below in terms of the diameter of $\partial\Omega_i$. 
\end{itemize} 

\begin{figure}[!b]
	\centering
	\includegraphics[width=5.5cm]{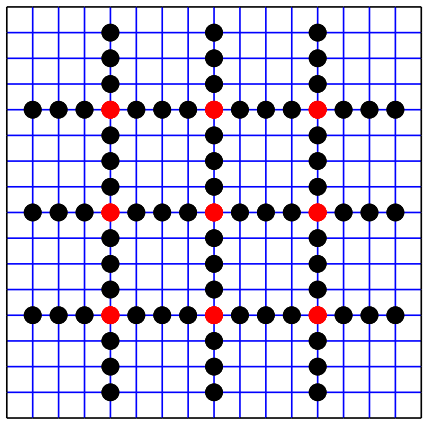}
	\caption{Interface of the subdomains (excluding the nodes on the boundary): red circles indicate the vertices of the subdomains, whereas black circles indicate the remainder interface nodes.}
	\label{splitfig}
\end{figure} 

Restricting to the two-dimensional case, although the theory of iterative substructuring (\cite{toselli2006domain} Section 4.2) does not cover the general cases where the boundary of a subdomain is not a straight line (as we have in our implementation, since we use general polygonal meshes), we can anyway define vertices and interface relatively easily. We say that a node $x$ belongs to the interface of a subdomain if it belongs to at least two subdomains, while a node $x$ is a vertex of a subdomain if it belongs to more than two subdomains (Figure \ref{splitfig}). This is the rule that we used in the implementation to split our mesh in the different subdomains.

\subsection{Decomposition of the virtual element spaces}
\label{sec:6}
The discrete variational problem \eqref{DiscreteProblem} can be written, in matrix form, as the following saddle-point linear system:
\begin{equation}\label{MatrixForm}
	\left[
	\begin{array}{cc}
		A & B^T\\
		B & 0\\
	\end{array}
	\right]
	\left[
	\begin{array}{c}
		\mathbf{u}\\
		p\\
	\end{array}
	\right]
	=
	\left[
	\begin{array}{c}
		\mathbf{f}\\
		0\\
	\end{array}
	\right]
\end{equation}
where the matrices A and B are associated with the discrete bilinear forms $a_h(\cdot,\cdot)$ and $b(\cdot,\cdot)$.
In the remainder of the paper, we omit the underscore $h$ since we will always refer to the finite dimensional space and so we write $\mathbf{\widehat{V}}\times Q$ instead of $\mathbf{\widehat{V}}_h\times Q_h$, only for sake of simplifying the notation.
Referring to the notations of the previous section, we naturally split the degrees of freedom (dofs) of the velocity components into boundary dofs ($\mathbf{D_V1}$ and $\mathbf{D_V2}$) and interior dofs ($\mathbf{D_V3}$ and $\mathbf{D_V4}$).
Following the notations introduced in \cite{li2006bddc}, we decompose the discrete velocity and pressure space $\mathbf{\widehat{V}}$ and $Q$ into:
\begin{equation}\label{discVQ}
	\mathbf{\widehat{V}} = \mathbf{V}_I \bigoplus \mathbf{\widehat{V}}_\Gamma\text{,}
	\quad Q = Q_I\bigoplus Q_0\text{.}
\end{equation}
$\mathbf{V}_I$ and $Q_I$ are direct sums of subdomain interior velocity spaces $\mathbf{V}_I^{(i)}$, and subdomain interior pressure spaces $Q_I^{(i)}$, respectively, i.e.,
\begin{equation}\label{discViQi}
	\mathbf{V}_I = \bigoplus_{i=1}^{N} \mathbf{V}_I^{(i)}\text{,}\quad Q_I = \bigoplus_{i=1}^{N} Q_I^{(i)}\text{.}
\end{equation}
The elements of $\mathbf{V}_I^{(i)}$ have support in the subdomain $\Omega_i$ and vanish on its interface $\Gamma_i$, while the elements of $Q_I^{(i)}$ are restrictions of elements in Q to $\Omega_i$. $\mathbf{\widehat{V}}_\Gamma$ is the space of the traces on $\Gamma$ of functions in $\mathbf{\widehat{V}}$ and $Q_0$ is the subspace of $Q$ with constant values $q_0^{(i)}$ in the subdomain $\Omega_i$.
We denote the space of interface velocity variables of the subdomain $\Omega_i$ by $\mathbf{V}_\Gamma^{(i)}$, and the associated product space by $\mathbf{V}_\Gamma = \prod_{i=1}^{N}\mathbf{V}_\Gamma^{(i)}$; generally functions in $\mathbf{V}_\Gamma$ are discontinuous across the interface. $R_\Gamma^{(i)}:\mathbf{\widehat{V}}_\Gamma\rightarrow\mathbf{V}_\Gamma^{(i)}$ is the operator which maps functions in the continuous interface velocity space $\mathbf{\widehat{V}}_\Gamma$ to their subdomain components in the space $\mathbf{V}_\Gamma^{(i)}$. We denote the direct sum of the $R_\Gamma^{(i)}$ with $R_\Gamma$.\\
With the decomposition of the solution space given in \eqref{discVQ}, the global saddle-point problem \eqref{MatrixForm} can be written as: find $(\mathbf{u}_I,p_I,\mathbf{u}_\Gamma,p_0) \in (\mathbf{V}_I,Q_I,\mathbf{\widehat{V}}_\Gamma,Q_0)$, such that:
\begin{equation}\label{discMat}
	\left[
	\begin{array}{cccc}
		A_{II} & B_{II}^T & \widehat{A}_{\Gamma I}^T & 0\\
		B_{II} & 0 & \widehat{B}_{I\Gamma} & 0\\
		\widehat{A}_{\Gamma I} & \widehat{B}_{I\Gamma}^T & \widehat{A}_{\Gamma \Gamma} & \widehat{B}_{0\Gamma}^T\\
		0 & 0 & \widehat{B}_{0\Gamma}^T & 0\\
	\end{array}
	\right]
	\left[
	\begin{array}{c}
		\mathbf{u}_I\\
		p_I\\
		\mathbf{u}_\Gamma\\
		p_0\\
	\end{array}
	\right]
	=
	\left[
	\begin{array}{c}
		\mathbf{f}_I\\
		0\\
		\mathbf{f}_\Gamma\\
		0\\
	\end{array}
	\right].
\end{equation}
\begin{remark}
	Here the lower left block of \eqref{discMat}  is zero because the bilinear form $b(\mathbf{u}_I,q_0)$ vanishes for any $\mathbf{v}_I \in \mathbf{V}_I$ and $q_0 \in Q_0$.
	To keep this property, when the change of basis for the pressure space is applied, it is important to take care of the fact that the shape and dimension of the elements is different.
\end{remark} 

The blocks related to the continuous interface velocity are assembled from the corresponding subdomain submatrices, e.g., $\widehat{A}_{\Gamma \Gamma} = \sum_{i=1}^{N} {R_\Gamma^{(i)}}^T \widehat{A}_{\Gamma \Gamma}^{(i)} R_\Gamma^{(i)}$ and $\widehat{B}_{0 \Gamma} = \sum_{i=1}^{N} \widehat{B}_{0 \Gamma}^{(i)} R_\Gamma^{(i)}$. Correspondingly, the right-hand side vector $\mathbf{f}_I$ consists of subdomain vectors $\mathbf{f}_I^{(i)}$, and $\mathbf{f}_\Gamma$ is assembled from the subdomain components $\mathbf{f}_\Gamma^{(i)}$; we denote the spaces of the right-hand side vectors $\mathbf{f}_I$ and $\mathbf{f}_\Gamma$ by $\mathbf{F}_I$ and $\mathbf{F}_\Gamma$ respectively.

By employing a symmetric permutation, the leading two by two blocks in the coefficient matrix can be rewritten as a block diagonal matrix with blocks corresponding to independent subdomain problems. We show here how such a matrix takes form in the simplest case of two subdomains:
\begin{equation}\label{twosub}
	\left[
	\begin{array}{cccccc}
		A_{II}^{(1)} & {B_{II}^{(1)}}^T & 0 & 0 & {A_{\Gamma I}^{(1)}}^T & 0\\
		B_{II}^{(1)} & 0 & 0 & 0 & B_{I\Gamma}^{(1)} & 0\\
		0 & 0 & A_{II}^{(2)} & {B_{II}^{(2)}}^T & {A_{\Gamma I}^{(2)}}^T & 0\\
		0 & 0 & B_{II}^{(2)} & 0 & B_{I\Gamma}^{(2)} & 0\\
		A_{\Gamma I}^{(1)} & {B_{I\Gamma}^{(1)}}^T & A_{\Gamma I}^{(2)} & {B_{I\Gamma}^{(2)}}^T & \widehat{A}_{\Gamma\Gamma} & {\widehat{B}_{0\Gamma}}^T\\
		0 & 0 & 0 & 0 & \widehat{B}_{0\Gamma} & 0
	\end{array}
	\right]
	\left[
	\begin{array}{c}
		\mathbf{u}_I^{(1)}\\
		p_I^{(1)}\\
		\mathbf{u}_I^{(2)}\\
		p_I^{(2)}\\
		\mathbf{u}_\Gamma\\
		p_0\\
	\end{array}
	\right]
	=
	\left[
	\begin{array}{c}
		\mathbf{f}_I^{(1)}\\
		0\\
		\mathbf{f}_I^{(2)}\\
		0\\
		\mathbf{f}_\Gamma\\
		0\\
	\end{array}
	\right].
\end{equation}
In the rest of this section the computations are always performed in the case of two subdomains. The extension to the general case with more subdomains is natural, but the computations are clearly more involved.\\
We proceed eliminating, by static condensation, the independent subdomain variables $(\mathbf{u}_I^{(1)},p_I^{(1)})$ and $(\mathbf{u}_I^{(2)},p_I^{(2)})$ in the system (\ref{twosub}). To do so, we solve two independent Dirichlet problems: 
\begin{equation}
	\left[
	\begin{array}{cc}
		A_{II}^{(1)} & {B_{II}^{(1)}}^T\\
		B_{II}^{(1)} & 0
	\end{array}
	\right]
	\left[
	\begin{array}{c}
		\mathbf{u}_I^{(1)}\\
		p_I^{(1)}\\
	\end{array}
	\right] + 
	\left[
	\begin{array}{cc}
		{A_{\Gamma I}^{(1)}}^T & 0\\
		B_{I\Gamma}^{(1)} & 0
	\end{array}
	\right]
	\left[
	\begin{array}{c}
		\mathbf{u}_\Gamma\\
		p_0\\
	\end{array}
	\right] = 
	\left[
	\begin{array}{c}
		\mathbf{F}_I^{(1)}\\
		0\\
	\end{array}
	\right] \text{,}
\end{equation}
\begin{align}
	\left[
	\begin{array}{cc}
		A_{II}^{(2)} & {B_{II}^{(2)}}^T\\
		B_{II}^{(2)} & 0
	\end{array}
	\right]
	\left[
	\begin{array}{c}
		\mathbf{u}_I^{(2)}\\
		p_I^{(2)}\\
	\end{array}
	\right] + 
	\left[
	\begin{array}{cc}
		{A_{\Gamma I}^{(2)}}^T & 0\\
		B_{I\Gamma}^{(2)} & 0
	\end{array}
	\right]
	\left[
	\begin{array}{c}
		\mathbf{u}_\Gamma\\
		p_0\\
	\end{array}
	\right] = 
	\left[
	\begin{array}{c}
		\mathbf{F}_I^{(2)}\\
		0\\
	\end{array}
	\right]\text{,}
\end{align}
thus
\begin{equation}\label{i1}
	\left[
	\begin{array}{c}
		\mathbf{u}_I^{(1)}\\
		p_I^{(1)}\\
	\end{array}
	\right] = 
	\left[
	\begin{array}{cc}
		A_{II}^{(1)} & {B_{II}^{(1)}}^T\\
		B_{II}^{(1)} & 0
	\end{array}
	\right]^{-1}
	\left(
	\left[
	\begin{array}{c}
		\mathbf{F}_I^{(1)}\\
		0\\
	\end{array}
	\right] - 
	\left[
	\begin{array}{cc}
		{A_{\Gamma I}^{(1)}}^T & 0\\
		B_{I\Gamma}^{(1)} & 0
	\end{array}
	\right]
	\left[
	\begin{array}{c}
		\mathbf{u}_\Gamma\\
		p_0\\
	\end{array}
	\right]
	\right) \text{,}
\end{equation}
\begin{equation}\label{i2}
	\left[
	\begin{array}{c}
		\mathbf{u}_I^{(2)}\\
		p_I^{(2)}\\
	\end{array}
	\right] = 
	\left[
	\begin{array}{cc}
		A_{II}^{(2)} & {B_{II}^{(2)}}^T\\
		B_{II}^{(2)} & 0
	\end{array}
	\right]^{-1}
	\left(
	\left[
	\begin{array}{c}
		\mathbf{F}_I^{(2)}\\
		0\\
	\end{array}
	\right] - 
	\left[
	\begin{array}{cc}
		{A_{\Gamma I}^{(2)}}^T & 0\\
		B_{I\Gamma}^{(2)} & 0
	\end{array}
	\right]
	\left[
	\begin{array}{c}
		\mathbf{u}_\Gamma\\
		p_0\\
	\end{array}
	\right]
	\right) \text{,}
\end{equation}
Then, substituting the solutions of \eqref{i1} and \eqref{i2} in
\begin{equation}\label{eqGamma}
	\left[
	\begin{array}{cc}
		A_{\Gamma I}^{(1)} & {B_{I\Gamma}^{(1)}}^T \\
		0 & 0 
	\end{array}
	\right]
	\left[
	\begin{array}{c}
		\mathbf{u}_I^{(1)}\\
		p_I^{(1)}\\
	\end{array}
	\right] + 
	\left[
	\begin{array}{cc}
		A_{\Gamma I}^{(2)} & {B_{I\Gamma}^{(2)}}^T \\
		0 & 0 
	\end{array}
	\right]
	\left[
	\begin{array}{c}
		\mathbf{u}_I^{(2)}\\
		p_I^{(2)}\\
	\end{array}
	\right] + 
	\left[
	\begin{array}{cc}
		\widehat{A}_{\Gamma\Gamma} & {\widehat{B}_{0\Gamma}}^T\\
		\widehat{B}_{0\Gamma} & 0
	\end{array}
	\right]
	\left[
	\begin{array}{c}
		\mathbf{u}_\Gamma\\
		p_0\\
	\end{array}
	\right]
	=
	\left[
	\begin{array}{c}
		\mathbf{F}_\Gamma\\
		0\\
	\end{array}
	\right]
\end{equation}
we obtain the global interface saddle-point problem:
\begin{equation}\label{globInt}
	\widehat{S}\text{ }\widehat{u} = 
	\left[
	\begin{array}{cc}
		\widehat{S}_\Gamma & {\widehat{B}_{0\Gamma}}^T\\
		\widehat{B}_{0\Gamma} & 0\\
	\end{array}
	\right]
	\left[
	\begin{array}{c}
		\mathbf{u}_\Gamma\\
		p_0\\
	\end{array}
	\right]
	=
	\left[
	\begin{array}{c}
		\mathbf{g}_\Gamma\\
		0\\
	\end{array}
	\right]
	= \widehat{\mathbf{g}}\text{,}
\end{equation}
where the right-hand side $\widehat{\mathbf{g}}\in \mathbf{F}_\Gamma \times F_0$ is given by
\begin{align}
	\widehat{\mathbf{g}} = \sum_{i=1}^{2} {R_\Gamma^{(i)}}^T \bigg \{ 
	\left[
	\begin{array}{c}
		\mathbf{f}_\Gamma^{(i)}\\
		0\\
	\end{array}
	\right] - 
	\left[
	\begin{array}{cc}
		A_{\Gamma I}^{(i)} & {B_{I\Gamma}^{(i)}}^T\\
		0 & 0
	\end{array}
	\right]
	\left[
	\begin{array}{cc}
		A_{II}^{(i)} & {B_{II}^{(i)}}^T\\
		B_{II}^{(i)} & 0
	\end{array}
	\right]^{-1}
	\left[
	\begin{array}{c}
		\mathbf{f}_I^{(i)}\\
		0\\
	\end{array}
	\right]
	\bigg \}.
\end{align}
We note that $\widehat{S}$ is assembled from the subdomain Stokes Schur complements $S^{(i)}$, which are defined by: given ${\mathbf{w}}^{(i)} = \mathbf{w}_\Gamma^{(i)} \times q_0^{(i)}  \in \mathbf{V}_\Gamma^{(i)} \times Q_0^{(i)}$, determine $S^{(i)}{\mathbf{w}}^{(i)}\in \mathbf{F}_\Gamma^{(i)} \times F_0^{(i)}$ such that 
\begin{equation}\label{firstSchur}
	\left[
	\begin{array}{cccc}
		A_{II}^{(i)} & {B_{II}^{(i)}}^T & {A_{\Gamma I}^{(i)}}^T & 0\\
		B_{II}^{(i)} & 0 & B_{I\Gamma}^{(i)} & 0\\
		A_{\Gamma I}^{(i)} & {B_{I\Gamma}^{(i)}}^T & {A}_{\Gamma\Gamma}^{(i)} & {{B}_{0\Gamma}^{(i)}}^T\\
		0 & 0 & {B}_{0\Gamma}^{(i)} & 0
	\end{array}
	\right]
	\left[
	\begin{array}{c}
		\mathbf{w}_I^{(i)}\\
		q_I^{(i)}\\
		\mathbf{w}_\Gamma^{(i)}\\
		q_0^{(i)}
	\end{array}
	\right] = 
	\left[
	\begin{array}{c}
		\mathbf{0}\\ 0\\ S^{(i)}{\mathbf{w}}^{(i)}
	\end{array}
	\right]\text{.}
\end{equation}
Denoting by $S_\Gamma$ the direct sum of the $S^{(i)}_\Gamma$, then $\widehat{S}_\Gamma$ is given by
\begin{equation}\label{ScapGdef}
	\widehat{S}_\Gamma = R_\Gamma^T S_\Gamma R_\Gamma = \sum_{i=1}^{2} {R^{(i)}_\Gamma}^T S^{(i)}_\Gamma R^{(i)}_\Gamma,
\end{equation} 
and then we set 
\begin{equation}
	R = \left[
	\begin{array}{cc}
		R_\Gamma  & 0\\
		0 & I
	\end{array}
	\right] \text{,} \quad 
	R^{(i)} = \left[
	\begin{array}{cc}
		R_\Gamma^{(i)}  & 0\\
		0 & I
	\end{array}
	\right]\text{.}
\end{equation}
Finally we see from \eqref{firstSchur}, that the action of $S^{(i)}$ on a vector can be evaluated by solving a Dirichlet problem on the subdomain $\Omega_i$ as in (\ref{i1}) and (\ref{i2}),
so it is not necessary to assemble the matrix $\widehat{S}$ because only its action is required. \\
In the next section we introduce a BDDC preconditioner for problem \eqref{globInt}, where the operator of the preconditioned problem is symmetric and positive definite, so we will use the PCG method to solve it.

\subsection{BDDC preconditioner}
\label{sec:7}
We now present the BDDC preconditioner, first designed in \cite{li2006bddc} for finite element discretizations of the Stokes equations, that we will extend to the VEM discretization introduced in the previous sections.
This preconditioner is very similar to FETI-DP, but there is a main difference between them: while in a FETI-DP algorithm the continuity of the solution will not be fully satisfied until the algorithm has converged, in the BDDC one full continuity is restored at the end of each iteration step, by using an average operator. \\
Before entering into the definition of the function space used to construct the BDDC preconditioner, we briefly justify the choice of our notation.
The subscript $\Gamma$ indicates dofs living on the interface, $\Pi$ and $\Delta$ are instead used to distinguish dofs of $\Gamma$ that belong to the primal and dual spaces, respectively, defined here below. Two other subscripts are used: $C$ indicates an operator referred to the coarse space and $D$ is instead used to highlight that an operator has been rescaled by suitable scaling functions, defined later. The hat $\widehat{\cdot}$ refers to a continuous space, the $\widetilde{\cdot}$ means that the space is continuous on primal interface dofs and discontinuous on the dual ones and finally no hat is used for the product of local spaces, which is discontinuous at all interface dofs.\\
As a first step, we introduce a partially assembled interface velocity space $\mathbf{\widetilde{V}}_\Gamma$,
\begin{equation}
	\mathbf{\widetilde{V}}_\Gamma = \mathbf{\widehat{V}}_\Pi \bigoplus \mathbf{V}_\Delta = \mathbf{\widehat{V}}_\Pi \bigoplus \big( \prod_{i=1}^N \mathbf{V}_\Delta^{(i)} \big).
\end{equation}
$\mathbf{\widehat{V}}_\Pi$ is the continuous coarse level primal interface velocity space which typically is spanned by subdomain vertex nodal basis functions, and/or by interface edge basis functions with constant values, or with values of weight functions, on these edge. These basis functions correspond to the primal interface velocity continuity constraints, which will be discussed later. We will always assume that the basis has been changed so that each primal basis function corresponds to an explicit degree of freedom. In other words, we will have explicit primal unknowns corresponding to the primal continuity constraints on edges. The primal degrees of freedom are shared by neighboring subdomains. The complimentary space $\mathbf{V}_\Delta$ is the direct sum of the subdomain dual interface velocity spaces $\mathbf{V}_\Delta^{(i)}$ , which correspond to the remaining interface velocity degrees of freedom and are spanned by basis functions which vanish at the primal degrees of freedom. Thus, an element in the space $\mathbf{\widetilde{V}}_\Gamma$ has a continuous primal velocity and typically a discontinuous dual velocity component.\\
We now introduce several restriction, extension, and scaling operators between a variety of spaces. As in \cite{li2006bddc}, $R_\Gamma^{(i)}$ is the operator which maps a function in the space  $\mathbf{\widehat{V}}_\Gamma$ to its component in $\mathbf{V}_\Gamma^{(i)}$.
We define $R_{\Delta}^{(i)}$ as the operator which maps the space $\mathbf{\widehat{V}}_\Gamma$ to its dual component in the space $\mathbf{V}_\Delta^{(i)}$. $R_{\Gamma \Pi}$ is the restriction operator from the space $\mathbf{\widehat{V}}_\Gamma$ to its subspace $\mathbf{\widehat{V}}_\Pi$; $R_\Pi^{(i)}$ is the operator which maps $\mathbf{\widehat{V}}_\Pi$ into its $\Gamma_i$-component. $\widetilde{R}_\Gamma$ is the direct sum of $R_{\Gamma \Pi}$ and the $R_\Delta^{(i)}$ , and it is a map from $\mathbf{\widehat{V}}_\Gamma$ into $\mathbf{\widetilde{V}}_\Gamma$.\\
The relationships among the previous spaces and operators are summarized in the following diagram:
\[
\begin{tikzcd}
	& \mathbf{\widetilde{V}}_\Gamma & \\
    \mathbf{V}_\Gamma^{(i)}  & \ar{l}[swap]{R_\Gamma^{(i)}} \mathbf{\widehat{V}}_\Gamma \ar{r}{R_\Delta^{(i)}}\ar{d}{R_{\Pi \Gamma}} \ar{u}[swap]{\widetilde{R}_\Gamma} & \mathbf{V}_\Delta^{(i)}\\
    & \mathbf{\widehat{V}}_\Pi \ar{r}{R_\Pi^{(i)}} & \mathbf{\widehat{V}}_\Pi^{(i)}\\
\end{tikzcd}
\]

In order to define certain scaling operators, which will be used in the definition of the BDDC preconditioner, see \eqref{BDDCprec} , we introduce a positive scaling factor $\delta^\dagger_i(x)$ for the nodes on the interface $\Gamma_i$ of each subdomain $\Omega_i$. For the type of problem we will use in the numerical experiment (incompressible Stokes problems), we simply define the  $\delta^\dagger_i(x)$ as the pseudoinverse counting functions, so:
\begin{equation}\label{pseudoinv}
	\delta^\dagger_i(x):=1/card(I_x),\quad x\in \Gamma_{i} 
\end{equation}
where $I_x$ is the set of indices of subdomains which have $x$ on their boundaries and $card(I_x)$ is the number of these subdomains.
Now we can define the scaled restriction operators $R^{(i)}_{D,\Delta}$, simply multiplying each non-zero element of $R^{(i)}_{\Delta}$, only one for row, by the corresponding scaling factor $\delta^\dagger_i(x)$. We construct also the scaled operator $\widetilde{R}_{D,\Gamma}$ as the direct sum of $R_{\Gamma,\Pi}$ and $R^{(i)}_{D,\Delta}$.
After the change of basis, the interface velocity Schur complement $\widetilde{S}_\Gamma$ is defined on the partially assembled interface velocity space $\widetilde{\mathbf{V}}_\Gamma$ by: given $\mathbf{v}_\Gamma \in \widetilde{\mathbf{V}}_\Gamma$, $\widetilde{S}_\Gamma \mathbf{v}_\Gamma \in \mathbf{\widetilde{F}}_\Gamma$ satisfies\\

\begin{align}
	\left[
	\begin{array}{ccccc}
		A_{II}^{(i)} & B_{II}^{{(i)}^T} & A_{\Delta I}^{{(i)}^T} & & \widetilde{A}_{\Pi I}^{{(i)}^T}\\
		B_{II}^{(i)} & 0 & B_{I\Delta}^{(i)} & & \widetilde{B}_{I\Pi}^{(i)}\\
		A_{\Delta I}^{(i)} & B_{I \Delta}^{{(i)}^T} & A_{\Delta \Delta}^{(i)} & & \widetilde{A}_{\Pi \Delta}^{{(i)}^T}\\
		& & & \ddots & \vdots \\
		\widetilde{A}_{\Pi I}^{(i)} & \widetilde{B}_{I\Pi}^{{(i)}^T} & \widetilde{A}_{\Pi \Delta}^{(i)} & \ldots & \widetilde{A}_{\Pi \Pi}
	\end{array}
	\right]
	\left[
	\begin{array}{c}
		\mathbf{v}_I^{(i)}\\
		p_I^{(i)}\\
		\mathbf{v}_\Delta^{(i)}\\
		\vdots\\	
		\mathbf{v}_\Pi^{(i)}\\
	\end{array}
	\right] =
	\left[
	\begin{array}{c}
		\mathbf{0}\\
		0\\
		(\widetilde{S}_\Gamma\mathbf{v}_\Gamma)_\Delta^{(i)}\\
		\vdots\\	
		(\widetilde{S}_\Gamma\mathbf{v}_\Gamma)_\Pi^{(i)}\\
	\end{array}
	\right]
\end{align}
Here $\widetilde{A}_{\Pi\Pi} = \sum_{i=1}^{N}{R_\Pi^{(i)}}^T A_{\Pi\Pi}^{(i)} R_\Pi^{(i)}$, $\widetilde{A}_{\Pi I}^{(i)} = {R_\Pi^{(i)}}^T A_{\Pi I}^{(i)}$, $\widetilde{A}_{\Pi \Delta}^{(i)} = {R_\Pi^{(i)}}^T A_{\Pi \Delta}^{(i)}$ and $\widetilde{B}_{I \Pi}^{(i)} = B_{I \Pi}^{(i)} R_\Pi^{(i)}$.

Defining by $\overline{R}_\Gamma$ the operator that maps the space $\widetilde{\mathbf{V}}_\Gamma$ into the product space $\mathbf{V}_\Gamma$ associated with the set of subdomains, we observe that $\widetilde{S}_\Gamma$ can be obtained from the Schur complements $S^{(i)}_\Gamma$ by assembling only the primal interface velocity part, i.e. as 
\begin{equation}\label{stildedef}
	\widetilde{S}_\Gamma = {\overline{R}_\Gamma}^T S_\Gamma \overline{R}_\Gamma.
\end{equation}
As we saw before \eqref{firstSchur} the global interface Schur operator $\widehat{S}_\Gamma$ is obtanied by fully assembling the $S_\Gamma^{(i)}$ across the subdomain interface, therefore it can be also obtained from $\widetilde{S}_\Gamma$ by further assembling the dual interface velocity part, $\widehat{S}_\Gamma = {\widetilde{R}_\Gamma}^T \widetilde{S}_\Gamma \widetilde{R}_\Gamma$. So we need to define an operator $\widetilde{B}_{0\Gamma}$, which maps the partially assembled interface velocity space $\widetilde{V}_\Gamma$ into $F_0$, the space of right hand sides corresponding to $Q_0$, and it is obtained from $\widetilde{B}_{0\Gamma}$ by assembling the dual interface velocity part on the subdomain interfaces, i.e. $\widehat{B}_{\Gamma} = \widetilde{B}_{\Gamma} \widetilde{R}_\Gamma$.\\ 
Introducing
\begin{align}
	\widetilde{R} = \left[
	\begin{array}{cc}
		\widetilde{R}_\Gamma  & 0\\
		0 & I
	\end{array}
	\right] \text{,} \quad 
	\widetilde{S} = \left[
	\begin{array}{cc}
		\widetilde{S}_\Gamma  & \widetilde{B}_{0\Gamma}^T\\
		\widetilde{B}_{0\Gamma} & 0
	\end{array}
	\right]\text{,}
\end{align}
we can write $\widehat{S}$, the operator of the global interface problem \eqref{globInt}, as 
\begin{align}
	\widehat{S} = \left[
	\begin{array}{cc}
		\widehat{S}_\Gamma  & {\widehat{B}_{0\Gamma}}^T\\
		\widehat{B}_{0\Gamma} & 0
	\end{array}
	\right] = 
	\left[
	\begin{array}{cc}
		{\widetilde{R}_\Gamma}^T \widetilde{S}_\Gamma \widetilde{R}_\Gamma   & {\widetilde{R}_\Gamma}^T{\widetilde{B}_{0\Gamma}}^T\\
		\widetilde{B}_{0\Gamma}\widetilde{R}_\Gamma & 0
	\end{array}
	\right] = \widetilde{R}^T \widetilde{S} \widetilde{R}\text{.}
\end{align}
The preconditioner for solving the global saddle-point problem \eqref{globInt} is
\begin{equation}\label{BDDCprec}
	M^{-1}=\widetilde{R}_D^T \widetilde{S}^{-1} \widetilde{R}_D,
\end{equation}
where we have defined 
\begin{align}
	\widetilde{R}_D:= \left[
	\begin{array}{cc}
		\widetilde{R}_{D,\Gamma}  & 0\\
		0 & I
	\end{array}
	\right] \text{,}
\end{align}
and so we have the BDDC preconditioned problem: find $(\mathbf{u_\Gamma},p_0)\in \mathbf{\widehat{V}}_\Gamma \times Q_0$, such that 
\begin{align}
	\widetilde{R}_D^T \widetilde{S}^{-1} \widetilde{R}_D \widehat{S}
	\left[
	\begin{array}{c}
		\mathbf{u}_\Gamma\\
		p_0\\
	\end{array}
	\right]
	= \widetilde{R}_D^T \widetilde{S}^{-1} \widetilde{R}_D
	\left[
	\begin{array}{c}
		\mathbf{g}_\Gamma\\
		0\\
	\end{array}
	\right]\text{.}
\end{align}
What we need in our implementation is to determine the action $\widetilde{S}^{-1}\mathbf{q}$ for any given $\mathbf{q} = (\mathbf{q_\Gamma},q_0) \in \widetilde{\mathbf{F}}_\Gamma \times F_0$, so we have to solve the linear system
\begin{equation}\label{bo}
	\left[
	\begin{array}{cc}
		\widetilde{S}_\Gamma & \widetilde{B}_{0\Gamma}^T\\
		\widetilde{B}_{0\Gamma} & 0\\
	\end{array}
	\right]
	\left[
	\begin{array}{c}
		\mathbf{u}_\Gamma\\
		p_0\\
	\end{array}
	\right]
	=
	\left[
	\begin{array}{c}
		\mathbf{q}_\Gamma\\
		q_0\\
	\end{array}
	\right]\text{.}
\end{equation}
Given the definition of $\widetilde{S}_\Gamma$ in (5.3), we have that solving (5.10) is equivalent to solve

\begin{align}
	\left[
	\begin{array}{cccccc}
		A_{II}^{(i)} & B_{II}^{{(i)}^T} & A_{\Delta I}^{{(i)}^T} & & \widetilde{A}_{\Pi I}^{{(i)}^T} & 0 \\
		B_{II}^{(i)} & 0 & B_{I\Delta}^{(i)} & & \widetilde{B}_{I\Pi}^{(i)} & 0 \\
		A_{\Delta I}^{(i)} & B_{I \Delta}^{{(i)}^T} & A_{\Delta \Delta}^{(i)} & & \widetilde{A}_{\Pi \Delta}^{{(i)}^T} & B_{0 \Delta}^{{(i)}^T}\\
		& & & \ddots & \vdots & \\
		\widetilde{A}_{\Pi I}^{(i)} & \widetilde{B}_{I\Pi}^{{(i)}^T} & \widetilde{A}_{\Pi \Delta}^{(i)} & \ldots & \widetilde{A}_{\Pi \Pi} & \widetilde{B}_{0 \Pi}^{T}\\
		0 & 0 & B_{0 \Delta}^{(i)} & & \widetilde{B}_{0 \Pi} & 0 \\
	\end{array}
	\right]
	\left[
	\begin{array}{c}
		\mathbf{u}_I^{(i)}\\
		p_I^{(i)}\\
		\mathbf{u}_\Delta^{(i)}\\
		\vdots\\	
		\mathbf{u}_\Pi\\
		p_0\\
	\end{array}
	\right] =
	\left[
	\begin{array}{c}
		\mathbf{0}\\
		0\\
		\mathbf{q}_\Delta^{(i)}\\
		\vdots\\	
		\mathbf{q}_\Pi \\
		q_0\\
	\end{array}
	\right]
\end{align}
where $\widetilde{B}_{0\Pi} = \sum_{i=1}^{N} B_{0\Pi}^{(i)} R_\Pi^{(i)}$. 
Now using a block factorization we obtain
\begin{align}
	\widetilde{S}^{-1} = \sum_{i=1}^{N} 
	\left[
	\begin{array}{ccc}
		0 & 0 & R_{\Delta,i}^T
	\end{array}
	\right]
	\left[
	\begin{array}{ccc}
		A_{II}^{(i)} & B_{II}^{{(i)}^T} & A_{\Delta I}^{{(i)}^T}\\
		B_{II}^{(i)} & 0 & B_{I \Delta}^{(i)}\\
		A_{\Delta I}^{(i)} & B_{I \Delta}^{{(i)}^T} & A_{\Delta \Delta}^{(i)}\\
	\end{array}
	\right]^{-1}
	\left[
	\begin{array}{c}
		0 \\
		0 \\
		R_{\Delta,i}
	\end{array}
	\right] + \Phi S_{CC}^{-1} \Phi^T \text{,}
\end{align}
where $R_{\Delta,i}$ maps $\widetilde{\mathbf{F}}_\Gamma \times F_0$ into $\mathbf{F}_\Delta^{(i)}$, the set of right hand sides corresponding to $\mathbf{V}_\Delta^{(i)}$.
The matrix $S_{CC}$, relatively to the primal constraints, has to be completely assembled in this way
\begin{align}
	\begin{split}
		S_{CC} = \sum_{i=1}^{N} R_C^{{(i)}^T} \bigg\{
		\left[
		\begin{array}{cc}
			A_{\Pi \Pi}^{(i)} & B_{0 \Pi}^{{(i)}^T} \\
			B_{0 \Pi}^{(i)} & 0
		\end{array}
		\right] - 
		\left[
		\begin{array}{ccc}
			A_{\Pi I}^{(i)} & B_{I \Pi}^{{(i)}^T} &  A_{\Pi \Delta}^{(i)} \\
			0 & 0 & B_{0 \Delta}^{(i)}
		\end{array}
		\right] \\
		\left[
		\begin{array}{ccc}
			A_{II}^{(i)} & B_{II}^{{(i)}^T} & A_{\Delta I}^{{(i)}^T}\\
			B_{II}^{(i)} & 0 & B_{I \Delta}^{(i)}\\
			A_{\Delta I}^{(i)} & B_{I \Delta}^{{(i)}^T} & A_{\Delta \Delta}^{(i)}\\
		\end{array}
		\right]^{-1}
		\left[
		\begin{array}{cc}
			A_{\Pi I}^{{(i)}^T} & 0 \\
			B_{I\Pi}^{(i)} & 0 \\
			A_{\Pi \Delta}^{{(i)}^T} & B_{0 \Delta}^{{(i)}^T} \\
		\end{array}
		\right]
		\bigg\} R_C^{(i)} \text{,}
	\end{split}
\end{align}
where we have defined 
\begin{align}
	R_C^{(i)} := \left[
	\begin{array}{cc}
		R_\Pi^{(i)} & 0\\
		0 & I
	\end{array}\right] \text{,}
\end{align}
the maps from $\widehat{\mathbf{V}}_\Pi \times Q_0$ to ${\mathbf{V}}_\Pi^{(i)} \times Q_0$. 
Finally we define the matrix 
\begin{align}
	\Phi = R_{\Pi 0}^T - \sum_{i=1}^{N} \left[
	\begin{array}{ccc}
		0 & 0 & R_{\Delta,i}^T
	\end{array}
	\right]
	\left[
	\begin{array}{ccc}
		A_{II}^{(i)} & B_{II}^{{(i)}^T} & A_{\Delta I}^{{(i)}^T}\\
		B_{II}^{(i)} & 0 & B_{I \Delta}^{(i)}\\
		A_{\Delta I}^{(i)} & B_{I \Delta}^{{(i)}^T} & A_{\Delta \Delta}^{(i)}\\
	\end{array}
	\right]^{-1}
	\left[
	\begin{array}{cc}
		A_{\Pi I}^{{(i)}^T} & 0 \\
		B_{I\Pi}^{(i)} & 0 \\
		A_{\Pi \Delta}^{{(i)}^T} & B_{0 \Delta}^{{(i)}^T} \\
	\end{array}
	\right] R_C^{(i)} \text{,}
\end{align}
where $R_{\Pi 0}$ is the map between the space $\widetilde{\mathbf{F}}_\Gamma \times F_0$ and $\widehat{\mathbf{F}}_\Pi \times F_0$.
\section{Theoretical estimates}
\label{sec:8}
We now present an estimate for the eigenvalues of the preconditioned operator $M^{-1}\widehat{S}$, following the theory developed in \cite{li2006bddc} and adapting it to our VEM formulation. We can do so because the space $\mathbf{V}_\Gamma$ coincides with the analogous space that would be obtained applying the same procedure with the FEM.  
This substantially allow us to carry over the theory formulated for the FEM to the VEM, except for the second assumption we will see later. For this result, a different proof is necessary and we follow \cite{bertoluzza2017bddc}, where a proof independent of the tassellation is given. 

We have, as a consequence of a result on the inertia of Schur complements, the following:
\begin{lemma}
	The subdomain Schur complements $S_\Gamma^{(i)}$, defined in \eqref{globInt}, are symmetric and positive definite.
\end{lemma}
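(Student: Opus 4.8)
\emph{Symmetry} is the easy half and I would dispose of it first: the $4\times 4$ block matrix in \eqref{firstSchur} is symmetric, and the Schur complement of a symmetric matrix with respect to a symmetric invertible principal block is again symmetric. Since $S^{(i)}_\Gamma$ is precisely the velocity--velocity block of $S^{(i)}$ (the pressure variable $q_0^{(i)}$ decouples from the interior elimination because $B_{I\Gamma}^{(i)}$ and $A_{\Gamma I}^{(i)}$ do not involve $q_0^{(i)}$), it inherits this symmetry.

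For the definiteness the main tool is Haynsworth inertia additivity: for a symmetric $M=\left[\begin{smallmatrix} M_{11} & M_{12}\\ M_{12}^T & M_{22}\end{smallmatrix}\right]$ with $M_{11}$ invertible one has
\[
\mathrm{In}(M)=\mathrm{In}(M_{11})+\mathrm{In}\big(M/M_{11}\big),
\]
where $\mathrm{In}(\cdot)$ denotes the triple (number of positive, negative, zero eigenvalues). I would apply this to the local matrix $M$ of \eqref{firstSchur} with $M_{11}=\left[\begin{smallmatrix} A_{II}^{(i)} & {B_{II}^{(i)}}^T\\ B_{II}^{(i)} & 0\end{smallmatrix}\right]$ the interior block, so that $M/M_{11}=S^{(i)}$. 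The first step is to pin down $\mathrm{In}(M_{11})$: the block $A_{II}^{(i)}$ is symmetric positive definite by coercivity of $a_h^{(i)}$ on the interior velocity dofs (which vanish on $\Gamma_i$), while $B_{II}^{(i)}$ has full row rank, this being the interior inf--sup, i.e. surjectivity of the discrete divergence onto the zero--mean interior pressures $Q_I^{(i)}$, a consequence of the divergence--free construction of the VEM space together with \eqref{infsupdisc} localized to $\Omega_i$. A standard saddle--point computation then gives $\mathrm{In}(M_{11})=(n_I,m_I,0)$ with $n_I=\dim\mathbf{V}_I^{(i)}$, $m_I=\dim Q_I^{(i)}$, so $M_{11}$ is invertible.

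The heart of the argument, and the place where I expect the real work, is the energy identity. Solving the interior saddle point for $(\mathbf{w}_I^{(i)},q_I^{(i)})$ in terms of $\mathbf{w}_\Gamma^{(i)}$ and substituting back, a short manipulation (using the two interior equations to cancel the interior stiffness and divergence terms) yields $\langle S_\Gamma^{(i)}\mathbf{w}_\Gamma^{(i)},\mathbf{w}_\Gamma^{(i)}\rangle=a_h^{(i)}(\mathbf{w},\mathbf{w})\ge 0$, where $\mathbf{w}$ is the discrete Stokes extension of $\mathbf{w}_\Gamma^{(i)}$. This already delivers symmetry and positive \emph{semi}definiteness. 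The obstacle is to exclude a nontrivial kernel: $a_h^{(i)}(\mathbf{w},\mathbf{w})=0$ forces $\nabla\mathbf{w}=0$, i.e. $\mathbf{w}$ a constant vector, and one must argue that this constant vanishes. In the two--subdomain configuration used throughout this section each $\Omega_i$ shares a portion of its boundary with $\partial\Omega$, where the homogeneous Dirichlet condition built into $\widehat{\mathbf{V}}_h$ pins $\mathbf{w}=0$ and hence the constant to zero; this is exactly what removes the rigid velocity mode and upgrades semidefiniteness to definiteness. Equivalently, once this kernel is shown trivial, inertia additivity forces $\mathrm{In}(M)=(n_I+n_\Gamma,\,m_I+1,\,0)$ and hence $\mathrm{In}(S^{(i)})=(n_\Gamma,1,0)$, with the single negative eigenvalue carried by the pressure constraint $q_0^{(i)}$ through the full--rank $B_{0\Gamma}^{(i)}$, so that the velocity block $S_\Gamma^{(i)}$ is positive definite.

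Finally, I would make the kernel/energy step robust to the polytopal setting: because the coercivity, the interior inf--sup and the characterization of $\ker a_h^{(i)}$ are purely algebraic facts about the local VEM forms, the whole argument is independent of the particular tessellation, so I would phrase it following \cite{bertoluzza2017bddc} rather than relying on any quasi--uniform triangular structure. The single delicate point that deserves care is the control of the constant velocity mode described above; everything else is routine once $M_{11}$ is known to be invertible.
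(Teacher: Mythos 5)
Your proof runs on the same engine as the paper's: both arguments are Sylvester--Haynsworth inertia counts for the local saddle-point matrix after eliminating an invertible interior block, so in that sense you have reproduced the intended route. The differences are in the bookkeeping and in what each version is honest about. The paper works with the three-by-three matrix \eqref{secondSchur} (no $q_0^{(i)}$), asserts that the full local velocity block is symmetric positive definite ``by coercivity of $a$'', and concludes by matching negative-eigenvalue counts; you work with the four-by-four matrix of \eqref{firstSchur}, justify invertibility of the interior block via the interior inf-sup, and add the energy identity $\mathbf{w}_\Gamma^{(i)T} S_\Gamma^{(i)}\mathbf{w}_\Gamma^{(i)} = a_h^{(i)}(\mathbf{w},\mathbf{w})$, which yields semidefiniteness directly and makes the inertia count a consistency check rather than the whole proof. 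Crucially, you isolate the one genuinely delicate point --- the constant velocity mode --- which the paper's proof buries: for a floating subdomain whose closure does not meet $\partial\Omega$, the local Neumann velocity block is only positive \emph{semi}definite, since $a^{(i)}$ is a seminorm vanishing on constants, so the paper's ``SPD by coercivity'' claim fails there; correspondingly the constant trace $c$ extends to a constant field that is divergence free and $a_h$-orthogonal to everything, giving $S_\Gamma^{(i)}c=0$. Be aware, though, that your resolution (pinning the constant through the homogeneous Dirichlet condition) only works under the hypothesis you smuggle in, namely that every subdomain touches $\partial\Omega$; it does not rescue the lemma for a general decomposition with interior subdomains, and neither does the paper's argument. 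The statement that survives for all subdomains is positive semidefiniteness; strict definiteness is properly a property of the assembled operators $\widehat{S}_\Gamma$ and $\widetilde{S}_\Gamma$, recovered from the global boundary condition and the primal constraints, as the paper itself notes after this lemma. So: same approach, your write-up is the more careful one, but you should state the floating-subdomain restriction as an explicit hypothesis rather than as an aside.
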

\begin{proof}
We know from (\ref{firstSchur}) that the Schur complement related to the velocity is defined by: given $\mathbf{w}_\Gamma^{(i)} \in \mathbf{V}_\Gamma^{(i)}$, determine $S_\Gamma^{(i)}{\mathbf{w}_\Gamma}^{(i)}\in \mathbf{F}_\Gamma^{(i)}$ such that 
\begin{equation}\label{secondSchur}
	\left[
	\begin{array}{ccc}
		A_{II}^{(i)} & {B_{II}^{(i)}}^T & {A_{\Gamma I}^{(i)}}^T\\
		B_{II}^{(i)} & 0 & B_{I\Gamma}^{(i)}\\
		A_{\Gamma I}^{(i)} & {B_{I\Gamma}^{(i)}}^T & {A}_{\Gamma\Gamma}^{(i)}\\
	\end{array}
	\right]
	\left[
	\begin{array}{c}
		\mathbf{w}_I^{(i)}\\
		q_I^{(i)}\\
		\mathbf{w}_\Gamma^{(i)}
	\end{array}
	\right] = 
	\left[
	\begin{array}{c}
		\mathbf{0}\\ 0\\ S_\Gamma^{(i)}{\mathbf{w}_\Gamma}^{(i)}
	\end{array}
	\right]\text{.}
\end{equation}
By the coercivity of $a(\cdot,\cdot)$ we know that the matrices 
\begin{align*}
\left[
	\begin{array}{cc}
		A_{II}^{(i)} & {A_{\Gamma I}^{(i)}}^T\\
		A_{\Gamma I}^{(i)}  & {A}_{\Gamma\Gamma}^{(i)}\\
	\end{array}
	\right]
\end{align*}
are symmetric and positive definite and so the left two by two upper block of the left-hand-side of (\ref{secondSchur}) has the same number of negative eigenvalues of the all matrix. 
Now, the left-hand-side matrices of (\ref{secondSchur}) are congruent to: 
\begin{align*}
	\left[
	\begin{array}{cccc}
		A_{II}^{(i)} & {B_{II}^{(i)}}^T & 0\\
		B_{II}^{(i)} & 0 & 0\\
		0 & 0 & S_{\Gamma}^{(i)}\\
	\end{array}
	\right]
\end{align*}
and so, by the Sylvester's law of inertia the velocity Schur complements are positive definite.
\end{proof}
In the following, we denote by $a^{(i)}$, $a_h^{(i)}$ and $b^{(i)}$ the restrictions to subdomain $\Omega_i$ of the
bilinear forms $a$, $a_h$ and $b$, respectively.
Then, we introduce the $|.|_{S_\Gamma^{(i)}}$ and $|.|_{S_\Gamma}$ seminorms defined by
\begin{align}
	|\mathbf{v}_\Gamma^{(i)}|_{S_\Gamma^{(i)}}^2 = {\mathbf{v}_\Gamma^{(i)}}^TS_\Gamma^{(i)} \mathbf{v}_\Gamma^{(i)}\text{,} \quad |\mathbf{v}_\Gamma|_{S_\Gamma}^2 = {\mathbf{v}_\Gamma}^T S_\Gamma \mathbf{v}_\Gamma = \sum_{i=1}^{N} |\mathbf{v}_\Gamma^{(i)}|_{S_\Gamma^{(i)}}^2,
\end{align}
and a norm and a seminorm on the space $\mathbf{V}_\Gamma^{(i)}$ 
\begin{equation}\label{semigrad}
	\Vert\mathbf{v}_\Gamma^{(i)}\Vert_{1/2,\Gamma_i}^2 = \Vert \mathbf{v}_\Gamma^{(i)}\Vert_{[H^{1/2}(\partial \Omega_i)]^2}^2\text{,}\qquad |\mathbf{v}_\Gamma^{(i)}|_{1/2,\Gamma_i}^2 = | \mathbf{v}_\Gamma^{(i)}|_{[H^{1/2}(\partial \Omega_i)]^2}^2\text{,}
\end{equation}
with consequently the norm $\Vert.\Vert_{1/2,\Gamma}$ and seminorm $|.|_{1/2,\Gamma}$ defined on the space $\mathbf{V}_\Gamma$ by $\Vert\mathbf{v}_\Gamma\Vert_{1/2,\Gamma}^2 = \sum_{i=1}^{N} \Vert\mathbf{v}_\Gamma^{(i)}\Vert_{1/2,\Gamma_i}^2$ and $|\mathbf{v}_\Gamma|_{1/2,\Gamma}^2 = \sum_{i=1}^{N} |\mathbf{v}_\Gamma^{(i)}|_{1/2,\Gamma_i}^2$.
\begin{lemma}\label{Bramble}
	There exist positive constant $c_1$ and $c_2$, independent of $H$, $h$ and the shape of subdomains, such that
	\begin{equation*}
		c_1\tilde{\beta}^2|\mathbf{v}_\Gamma|_{S_\Gamma}^2 \leq |\mathbf{v}_\Gamma|_{1/2,\Gamma}^2 \leq c_2 |\mathbf{v}_\Gamma|_{S_\Gamma}^2 \quad \forall \mathbf{v}_\Gamma \in \mathbf{V}_\Gamma\text{,}
	\end{equation*}
	where $\tilde{\beta}$ is the inf-sup stability constant defined in \eqref{infsupdisc}.
\end{lemma}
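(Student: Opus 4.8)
The plan is to read the Schur seminorm as a constrained discrete energy, reduce it to an $H^1$ seminorm by VEM stability, and then prove the two inequalities separately by a trace estimate (right bound) and by an inf-sup-based divergence correction (left bound). Since $S_\Gamma^{(i)}$ arises from the local saddle-point system \eqref{secondSchur}, I would first test that system with $(\mathbf{w}_I^{(i)},q_I^{(i)},\mathbf{v}_\Gamma^{(i)})$ and use the second block equation $B_{II}^{(i)}\mathbf{w}_I^{(i)}+B_{I\Gamma}^{(i)}\mathbf{v}_\Gamma^{(i)}=0$ to cancel all pressure contributions, obtaining
\[
|\mathbf{v}_\Gamma^{(i)}|_{S_\Gamma^{(i)}}^2 = a_h^{(i)}(\mathbf{w}^{(i)},\mathbf{w}^{(i)}),
\]
where $\mathbf{w}^{(i)}$ is the velocity part of the solution of \eqref{secondSchur}. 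Equivalently, $|\mathbf{v}_\Gamma^{(i)}|_{S_\Gamma^{(i)}}^2$ is the minimum of $a_h^{(i)}(\mathbf{v},\mathbf{v})$ over all discrete velocity fields $\mathbf{v}$ on $\Omega_i$ (restrictions of functions in $\mathbf{\widehat{V}}$) with trace $\mathbf{v}_\Gamma^{(i)}$ on $\Gamma_i$ satisfying $b^{(i)}(\mathbf{v},q_I)=0$ for every $q_I\in Q_I^{(i)}$; since such $\mathbf{v}$ have element-wise constant divergence, this constraint simply fixes that divergence to the value $c_i=|\Omega_i|^{-1}\int_{\Gamma_i}\mathbf{v}_\Gamma^{(i)}\cdot\mathbf{n}$. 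By the $h$- and shape-independent stability of the VEM form $a_h$, this energy is equivalent to the ($\nu$-weighted) seminorm $|\mathbf{w}^{(i)}|_{1,\Omega_i}^2$, and I would henceforth argue with the $H^1$ seminorm of this constrained minimal extension, folding $\nu$ into the constants as in \cite{li2006bddc}.

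For the right inequality I would simply note that $\mathbf{w}^{(i)}$ is one admissible $H^1(\Omega_i)$ extension of $\mathbf{v}_\Gamma^{(i)}$, so a trace inequality $|\mathbf{v}_\Gamma^{(i)}|_{1/2,\Gamma_i}\le C\,|\mathbf{w}^{(i)}|_{1,\Omega_i}$ yields $|\mathbf{v}_\Gamma^{(i)}|_{1/2,\Gamma_i}^2\le c_2\,|\mathbf{v}_\Gamma^{(i)}|_{S_\Gamma^{(i)}}^2$, and summing over $i$ gives the claim. For the left inequality I would build an explicit competitor and invoke minimality. Starting from any bounded discrete extension $\mathbf{u}_H$ of $\mathbf{v}_\Gamma^{(i)}$ with $|\mathbf{u}_H|_{1,\Omega_i}\le C\,|\mathbf{v}_\Gamma^{(i)}|_{1/2,\Gamma_i}$, its element-wise constant divergence need not equal $c_i$, so I would correct it by an interior field $\mathbf{z}\in\mathbf{V}_I^{(i)}$ (vanishing on $\Gamma_i$) solving $\operatorname{div}\mathbf{z}=c_i-\operatorname{div}\mathbf{u}_H$ in the discrete sense. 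The compatibility condition $\int_{\Omega_i}(c_i-\operatorname{div}\mathbf{u}_H)=0$ holds by the divergence theorem and the definition of $c_i$, and the discrete inf-sup condition \eqref{infsupdisc} applied on $\Omega_i$ furnishes such a $\mathbf{z}$ with
\[
|\mathbf{z}|_{1,\Omega_i}\le \frac{C}{\tilde{\beta}}\,\|c_i-\operatorname{div}\mathbf{u}_H\|_{0,\Omega_i}\le \frac{C}{\tilde{\beta}}\,|\mathbf{u}_H|_{1,\Omega_i}.
\]
Then $\widetilde{\mathbf{w}}=\mathbf{u}_H+\mathbf{z}$ is admissible for the constrained minimisation, so $|\mathbf{v}_\Gamma^{(i)}|_{S_\Gamma^{(i)}}^2\le C\,|\widetilde{\mathbf{w}}|_{1,\Omega_i}^2\le C\,\tilde{\beta}^{-2}\,|\mathbf{v}_\Gamma^{(i)}|_{1/2,\Gamma_i}^2$, which rearranges to $c_1\tilde{\beta}^2|\mathbf{v}_\Gamma^{(i)}|_{S_\Gamma^{(i)}}^2\le|\mathbf{v}_\Gamma^{(i)}|_{1/2,\Gamma_i}^2$; summing over $i$ closes the estimate and explains why the factor $\tilde{\beta}^2$ appears only on the lower side.

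The main obstacle is making the trace inequality and the bounded extension $\mathbf{u}_H$ uniform in the subdomain \emph{shape}. On the general polygonal subdomains produced by the VEM decomposition the classical FEM device — mapping to a fixed reference subdomain and scaling — is unavailable, and a careless argument would let the constant depend on the aspect ratio or on the number of edges of $\Omega_i$. This is precisely the point where I would abandon the finite element proof of \cite{li2006bddc} and instead invoke the tessellation-independent trace and extension estimates of \cite{bertoluzza2017bddc}, whose constants depend only on the regularity parameters of (A1)--(A3) and (S1)--(S2) and not on the individual subdomain geometry. Since $\tilde{\beta}$ in \eqref{infsupdisc} is $h$-independent by the Proposition, the resulting $c_1,c_2$ are independent of $H$, $h$ and the subdomain shape, as asserted.
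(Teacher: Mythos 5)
Your proposal is correct, and its skeleton matches the paper's: both arguments identify $|\mathbf{v}_\Gamma^{(i)}|^2_{S_\Gamma^{(i)}}$ with the energy $a_h^{(i)}(T(\mathbf{v}_\Gamma),T(\mathbf{v}_\Gamma))$ of the discrete Stokes extension, compare it with the discrete harmonic extension, invoke the uniform stability of $a_h$, and charge the divergence constraint to the subdomain inf-sup constant; the upper bound $|\mathbf{v}_\Gamma|^2_{1/2,\Gamma}\le c_2|\mathbf{v}_\Gamma|^2_{S_\Gamma}$ is obtained in essentially the same way in both (minimality of the harmonic extension, equivalently a trace inequality). Where you genuinely differ is in the mechanism for the lower bound. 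The paper follows Bramble--Pasciak \cite{bramble1990domain}: it keeps the pressure $S(\mathbf{v}_\Gamma)$ of the Stokes extension as an explicit object, bounds $\Vert S(\mathbf{v}_\Gamma)\Vert_{Q_i}\le C\tilde{\beta}^{-1}a_h^{(i)}(T(\mathbf{v}_\Gamma),T(\mathbf{v}_\Gamma))^{1/2}$ via the inf-sup condition on $\Omega_i$, and concludes through the identity $a_h^{(i)}(T(\mathbf{v}_\Gamma),T(\mathbf{v}_\Gamma))=a_h^{(i)}(T(\mathbf{v}_\Gamma),\mathbf{v}_\Gamma^{\mathcal H})+b^{(i)}(\mathbf{v}_\Gamma^{\mathcal H},S(\mathbf{v}_\Gamma))$ and Cauchy--Schwarz. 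You instead work entirely on the primal side: you use the constrained-minimization characterization of the Schur energy and exhibit an admissible competitor $\mathbf{u}_H+\mathbf{z}$, where $\mathbf{z}\in\mathbf{V}_I^{(i)}$ corrects the elementwise-constant divergence and is controlled, again through the local inf-sup (read as surjectivity of the discrete divergence with a $\tilde{\beta}^{-1}$-bounded right inverse), by $C\tilde{\beta}^{-1}|\mathbf{u}_H|_{1,\Omega_i}$. The two arguments are dual to one another and yield the same $\tilde{\beta}^{-2}$; yours makes it transparent why the inf-sup constant appears only on one side and avoids handling the pressure, at the cost of having to verify the compatibility condition and construct the right inverse explicitly. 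Both proofs ultimately rest on the same two shape-robust ingredients, a uniform trace/extension estimate and the subdomain-wise inf-sup, and your explicit appeal to the tessellation-independent estimates of \cite{bertoluzza2017bddc} for the former is consistent with, indeed slightly more careful than, what the paper states.
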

\begin{proof}
	This proof follows substantially the result presented in Bramble and Pasciak (\cite{bramble1990domain} Theorem 4.1) where a proof for FEM is provided.
	Given $\mathbf{v}_\Gamma \in \mathbf{V}_\Gamma$, we define the operators $T:\mathbf{V}_\Gamma \rightarrow \mathbf{\widehat{V}}$ and $S:\mathbf{V}_\Gamma \rightarrow Q$ satisfying $\forall i=1,...,N$:
	\begin{align}
		\begin{split}
			&(1) \quad S(\mathbf{v}_\Gamma)|_{\Omega_i} \in Q_I^{(i)},\\
			&(2) \quad T(\mathbf{v}_\Gamma)|_\Gamma = \mathbf{v}_\Gamma,\\
			&(3) \quad a_h^{(i)}(T(\mathbf{v}_\Gamma),\mathbf{v})+b^{(i)}(\mathbf{v},S(\mathbf{v}_\Gamma) = \mathbf{0} \quad \text{for all} \quad \mathbf{v} \in \mathbf{V}_I^{(i)},\\
			&(4) \quad b^{(i)}(T(\mathbf{v}_\Gamma), q) = 0 \quad \text{for all} \quad q \in Q_I^{(i)}.\\
		\end{split}
	\end{align}
	The above condition uniquely defines $S$ and $T$.
	Now given $\mathbf{v}_\Gamma\in \mathbf{V}_\Gamma$, let $\mathbf{v}_\Gamma^\mathcal{H}\in \mathbf{\widehat{V}}$ be the discrete harmonic extension of $\mathbf{v}_\Gamma$, i.e. the unique function in $\mathbf{\widehat{V}}$ which equals $\mathbf{v}_\Gamma$ on $\Gamma$ and 
	satisfies $\forall i=1,...,N$:
	\begin{align}
		a^{(i)}(\mathbf{v}_\Gamma^\mathcal{H},\mathbf{v}) = 0 \quad \text{for all} \quad \mathbf{v} \in \mathbf{V}_I^{(i)}.
	\end{align}
	By the stability of the discrete harmonic extension and the stability of the discrete bilinear form $a_h$ \cite{da2017divergence}, we have on each subdomain:
	\begin{align}\label{h1}
		a_h^{(i)}(\mathbf{v}_\Gamma^\mathcal{H},\mathbf{v}_\Gamma^\mathcal{H})\leq c_3 a^{(i)}(\mathbf{v}_\Gamma^\mathcal{H},\mathbf{v}_\Gamma^\mathcal{H})\leq c_3|\mathbf{v}^{(i)}_\Gamma|^2_{1/2,\Gamma_i},
	\end{align}
	where $c_3$ is a positive constant independent of $h$, $H$ and the number of subdomains $N$.
	Now, by definition of $S$ and $T$, and since $b^{(i)}(T(\mathbf{v}_\Gamma),S(\mathbf{v}_\Gamma)) = 0$, we have:
	\begin{align}\label{bes}
		a_h^{(i)}(T(\mathbf{v}_\Gamma),T(\mathbf{v}_\Gamma)) = a_h^{(i)}(T(\mathbf{v}_\Gamma),\mathbf{v}_\Gamma^\mathcal{H}) + b^{(i)}(\mathbf{v}_\Gamma^\mathcal{H},S(\mathbf{v}_\Gamma)).
	\end{align}
	Applying \eqref{infsupdisc} on the subdomains, we have, for some $c > 0$:
	\begin{align}\label{bor}
		\begin{split}
			||S(\mathbf{v}_\Gamma)||^2_{Q_i} \leq \tilde{\beta}^{-2} \sup_{\mathbf{w}\in \mathbf{V}_I^{(i)}}\frac{b^{(i)}(\mathbf{w},S(\mathbf{v}_\Gamma))^2}{\Vert \mathbf{w} \Vert_1^2} \leq c \tilde{\beta}^{-2} \sup_{\mathbf{w}\in \mathbf{V}_I^{(i)}}\frac{b^{(i)}(\mathbf{w},S(\mathbf{v}_\Gamma))^2}{a_h^{(i)}(\mathbf{w},\mathbf{w})}\\
			= c \tilde{\beta}^{-2} \sup_{\mathbf{w}\in \mathbf{V}_I^{(i)}}\frac{a_h^{(i)}(T(\mathbf{v}_\Gamma),\mathbf{w})^2}{a_h^{(i)}(\mathbf{w},\mathbf{w})} \leq c \tilde{\beta}^{-2} a_h^{(i)}(T(\mathbf{v}_\Gamma),T(\mathbf{v}_\Gamma)),
		\end{split}
	\end{align}
	Applying Cauchy-Schwarz to the first term in \eqref{bes} and using \eqref{bor}, we have:
	\begin{align*}
		\begin{split}
			|\mathbf{v}^{(i)}_\Gamma|^2_{S_\Gamma^{(i)}} = a_h^{(i)}(T(\mathbf{v}_\Gamma),T(\mathbf{v}_\Gamma)) \leq a_h^{(i)}(T(\mathbf{v}_\Gamma),T(\mathbf{v}_\Gamma))^{1/2}a_h^{(i)}(\mathbf{v}_\Gamma^\mathcal{H},\mathbf{v}_\Gamma^\mathcal{H})^{1/2}\\ + c|\mathbf{v}_\Gamma^\mathcal{H}|_{H^1(\Omega_i)}||S(\mathbf{v}_\Gamma)||_{Q_i}
			\leq a_h(T(\mathbf{v}_\Gamma),T(\mathbf{v}_\Gamma))^{1/2}a_h(\mathbf{v}_\Gamma^\mathcal{H},\mathbf{v}_\Gamma^\mathcal{H})^{1/2} +\\ c\tilde{\beta}^{-1}a_h(\mathbf{v}_\Gamma^\mathcal{H},\mathbf{v}_\Gamma^\mathcal{H})^{1/2}  a_h^{(i)}(T(\mathbf{v}_\Gamma),T(\mathbf{v}_\Gamma))^{1/2}
		\end{split}
	\end{align*}
	and then:
	\begin{equation}
		c \tilde{\beta}^2 |\mathbf{v}^{(i)}_\Gamma|^2_{S_\Gamma^{(i)}} = c \tilde{\beta}^2 a_h^{(i)}(T(\mathbf{v}_\Gamma),T(\mathbf{v}_\Gamma)) \leq a_h^{(i)}(\mathbf{v}_\Gamma^\mathcal{H},\mathbf{v}_\Gamma^\mathcal{H}).
	\end{equation}
	Finally, from \eqref{h1} and summing on the subdomains
	\begin{align*}
		c \tilde{\beta}^2 |\mathbf{v}_\Gamma|_{S_\Gamma}^2= c \tilde{\beta}^2 a_h(T(\mathbf{v}_\Gamma),T(\mathbf{v}_\Gamma)) \leq a_h(\mathbf{v}_\Gamma^\mathcal{H},\mathbf{v}_\Gamma^\mathcal{H}) \leq c_3 |\mathbf{v}_\Gamma|_{1/2,\Gamma}^2,
	\end{align*}
	which yields the first inequality of the thesis with $c_1=c/c_3$.
	
	For the second inequality we have, by definition of the discrete harmonic extension and again the stability of the discrete bilinear form $a_h$:
	\begin{equation}
		|\mathbf{v}_\Gamma|_{1/2,\Gamma_i}^2 \leq a^{(i)}(\mathbf{v}_\Gamma^\mathcal{H},\mathbf{v}_\Gamma^\mathcal{H}) \leq a^{(i)}(T(\mathbf{v}_\Gamma),T(\mathbf{v}_\Gamma)) \leq c_2 a_h^{(i)}(T(\mathbf{v}_\Gamma),T(\mathbf{v}_\Gamma))
	\end{equation}
	and then:
	\begin{equation}
		|\mathbf{v}_\Gamma|_{1/2,\Gamma}^2 \leq c_2 |\mathbf{v}_\Gamma|_{S_\Gamma}^2 \text{.} 
	\end{equation}
\end{proof}

The operators $\widehat{S}_\Gamma$ and $\widetilde{S}_\Gamma$, given in \eqref{ScapGdef} and \eqref{stildedef}, are both symmetric and positive definite, because of the Dirichlet boundary conditions on $\partial \Omega$ and provided that sufficiently many primal constraints are chosen. We can then define the $\widehat{S}_\Gamma$ and $\widetilde{S}_\Gamma$ norms on the spaces $\widehat{V}_\Gamma$ and $\widetilde{V}_\Gamma$ by 
\begin{align*}
	\begin{split}
		\Vert \mathbf{v}_\Gamma \Vert_{\widehat{S}_\Gamma}^2 = \mathbf{v}_\Gamma^T R_\Gamma^T S_\Gamma R_\Gamma \mathbf{v}_\Gamma = |R_\Gamma \mathbf{v}_\Gamma|_{S_\Gamma}^2 \quad \forall \mathbf{v}_\Gamma \in \widehat{\mathbf{V}}_\Gamma\text{,}\\
		\Vert \mathbf{v}_\Gamma \Vert_{\widetilde{S}_\Gamma}^2 = \mathbf{v}_\Gamma^T \overline{R}_\Gamma^T S_\Gamma \overline{R}_\Gamma \mathbf{v}_\Gamma = |\overline{R}_\Gamma \mathbf{v}_\Gamma|_{S_\Gamma}^2 \quad \forall \mathbf{v}_\Gamma \in \widetilde{\mathbf{V}}_\Gamma\text{.}
	\end{split}
\end{align*}
We then define two spaces, whose utility is that, restricted to such spaces, the interface problem operators $\widehat{S}$ of \eqref{globInt} and $\widetilde{S}$ of \eqref{bo} are positive semi-definite. As in \cite{li2006bddc}, we give the following:
\begin{definition}
	Given the discrete spaces $\widehat{\mathbf{V}}_{\Gamma}$ and $\widetilde{\mathbf{V}}_{\Gamma}$, we define the two subspaces
	\begin{align*}
		\begin{split}
			\widehat{\mathbf{V}}_{\Gamma,B} = \{\mathbf{v}_\Gamma \in \widehat{\mathbf{V}}_\Gamma | \widehat{B}_{0\Gamma}\mathbf{v}_\Gamma = 0\},\\
			\widetilde{\mathbf{V}}_{\Gamma,B} = \{\mathbf{v}_\Gamma \in \widetilde{\mathbf{V}}_\Gamma | \widetilde{B}_{0\Gamma}\mathbf{v}_\Gamma = 0\}.
		\end{split}
	\end{align*}
	We call $\widehat{\mathbf{V}}_{\Gamma,B} \times Q_0$ and $\widetilde{\mathbf{V}}_{\Gamma,B} \times Q_0$ the benign subspaces of $\widehat{\mathbf{V}}_{\Gamma} \times Q_0$ and $\widetilde{\mathbf{V}}_{\Gamma} \times Q_0$. 
\end{definition} 
\begin{lemma}
	The interface operator $\widehat{S}$ of \eqref{globInt}, restricted to the subspace $\widehat{\mathbf{V}}_{\Gamma,B} \times Q_0$ is positive semi-definite. The same is true for $\widetilde{S}$ of \eqref{stildedef} restricted to $\widetilde{\mathbf{V}}_{\Gamma,B} \times Q_0$.
\end{lemma}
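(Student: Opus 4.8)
The plan is to evaluate directly the quadratic form associated with $\widehat{S}$ on a generic element of the benign subspace and to observe that the saddle-point coupling disappears there. Writing a generic element as $(\mathbf{v}_\Gamma, p_0) \in \widehat{\mathbf{V}}_{\Gamma,B} \times Q_0$ and using the block form of $\widehat{S}$ in \eqref{globInt}, I would compute
\begin{equation*}
	\begin{bmatrix} \mathbf{v}_\Gamma \\ p_0 \end{bmatrix}^T \widehat{S} \begin{bmatrix} \mathbf{v}_\Gamma \\ p_0 \end{bmatrix} = \mathbf{v}_\Gamma^T \widehat{S}_\Gamma \mathbf{v}_\Gamma + 2\, p_0^T \widehat{B}_{0\Gamma}\mathbf{v}_\Gamma,
\end{equation*}
where the two off-diagonal contributions coincide because they are scalars and transposes of one another. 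The defining property $\widehat{B}_{0\Gamma}\mathbf{v}_\Gamma = 0$ of the benign subspace then annihilates the second term, leaving precisely $\mathbf{v}_\Gamma^T \widehat{S}_\Gamma \mathbf{v}_\Gamma$.

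The conclusion follows immediately from the fact, recalled just before the statement, that $\widehat{S}_\Gamma = R_\Gamma^T S_\Gamma R_\Gamma$ is symmetric and positive definite (which in turn rests on the preceding lemma asserting the positive definiteness of the subdomain Schur complements $S_\Gamma^{(i)}$). Hence $\mathbf{v}_\Gamma^T \widehat{S}_\Gamma \mathbf{v}_\Gamma \geq 0$ for every admissible $\mathbf{v}_\Gamma$, so the quadratic form of $\widehat{S}$ is non-negative on the whole of $\widehat{\mathbf{V}}_{\Gamma,B} \times Q_0$. I would also emphasize why the form is merely semi-definite and not definite: any pair $(\mathbf{0}, p_0)$ with $p_0 \neq 0$ lies in the benign subspace and makes the form vanish, since the pressure variable does not enter the reduced expression at all. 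This degeneracy in the pressure direction is exactly what motivates restricting the analysis to the benign subspaces.

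For $\widetilde{S}$ the argument is identical word for word: on $\widetilde{\mathbf{V}}_{\Gamma,B} \times Q_0$ the constraint $\widetilde{B}_{0\Gamma}\mathbf{v}_\Gamma = 0$ removes the off-diagonal contribution and reduces the quadratic form to $\mathbf{v}_\Gamma^T \widetilde{S}_\Gamma \mathbf{v}_\Gamma$, which is non-negative because $\widetilde{S}_\Gamma = \overline{R}_\Gamma^T S_\Gamma \overline{R}_\Gamma$ is symmetric and positive definite by the same reasoning. There is essentially no obstacle here: the only substantive ingredient is the positive definiteness of $\widehat{S}_\Gamma$ and $\widetilde{S}_\Gamma$, which is already available from \eqref{ScapGdef} and \eqref{stildedef}, and the statement is then a one-line consequence of the benign constraint once the quadratic form has been expanded.
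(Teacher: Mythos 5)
Your proof is correct: expanding the quadratic form of the block operator, noting that the zero lower-right block leaves only the cross term $2\,p_0^T\widehat{B}_{0\Gamma}\mathbf{v}_\Gamma$, which the benign constraint annihilates, and then invoking the positive (semi-)definiteness of $\widehat{S}_\Gamma$ and $\widetilde{S}_\Gamma$ is exactly the intended argument. The paper itself states this lemma without proof (deferring to the Li--Widlund reference), but your computation is precisely what underlies the identity $|\mathbf{v}|_{\widehat{S}}^2=\Vert\mathbf{v}_\Gamma\Vert_{\widehat{S}_\Gamma}^2$ that the paper writes down immediately afterwards, and your remark on the degeneracy in the direction $(\mathbf{0},p_0)$ correctly explains why only semi-definiteness holds.
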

We define the $\widehat{S}$ and $\widetilde{S}$ seminorms on the benign subspaces
\begin{align*}
	\begin{split}
		|\mathbf{v}|_{\widehat{S}}^2 = \mathbf{v}^T \widehat{S} \mathbf{v} = \Vert \mathbf{v}_\Gamma\Vert_{\widehat{S}_\Gamma} \quad \forall \mathbf{v} = (\mathbf{v}_{\Gamma},q_0) \in \widehat{\mathbf{V}}_{\Gamma,B}\times Q_0\text{,}\\
		|\mathbf{v}|_{\widetilde{S}}^2 = \mathbf{v}^T \widetilde{S} \mathbf{v} = \Vert \mathbf{v}_\Gamma\Vert_{\widetilde{S}_\Gamma} \quad \forall \mathbf{v} = (\mathbf{v}_{\Gamma},q_0) \in \widetilde{\mathbf{V}}_{\Gamma,B}\times Q_0\text{.}
	\end{split}
\end{align*}
Now we define an average operator $E_D = \widetilde{R}\widetilde{R}_D^T$, which maps $\mathbf{\widetilde{V}}_\Gamma\times Q_0$, with generally discontinuous interface velocities, to elements with continuous interface velocities in the same space. For any $\mathbf{v} = (\mathbf{v}_\Gamma,q_0) \in \widetilde{\mathbf{V}}_{\Gamma,B}\times Q_0$,
\begin{align}
	E_D = 
	\left[
	\begin{array}{c}
		\mathbf{v}_\Gamma\\
		q_0
	\end{array}
	\right] = 
	\left[
	\begin{array}{cc}
		\widetilde{R}_\Gamma & 0\\
		0 & I
	\end{array}
	\right]
	\left[
	\begin{array}{cc}
		\widetilde{R}_{D,\Gamma} & 0\\
		0 & I
	\end{array}
	\right]
	\left[
	\begin{array}{c}
		\mathbf{v}_\Gamma\\
		q_0
	\end{array}
	\right] = 
	\left[
	\begin{array}{c}
		E_{D,\Gamma}\mathbf{v}_\Gamma\\
		q_0
	\end{array}
	\right]
\end{align} 
where $E_D = \widetilde{R}\widetilde{R}_{D,\Gamma}^T$, provides the average of the interface velocities across the interface $\Gamma$. Recalling that we can split $\mathbf{v} = \mathbf{v}_\Pi \oplus \mathbf{v}_\Delta$, we have $E_D \mathbf{v} = \mathbf{v}_\Pi \oplus E_{D,\Delta} \mathbf{v}_\Delta$, where $E_{D,\Delta} \mathbf{v}_\Delta$ is the dual part of the averaged vector.
As in the FEM case (see \cite{li2006bddc}) we need two assumptions to proceed in the discussion, these will be satisfied when a reasonable choice of the primal constraints will be done.
\begin{ass}\label{ass1}
	For any $\mathbf{v}_\Delta\in\mathbf{V}_\Delta$, $\int_{\partial\Omega_i}\mathbf{v}_\Delta^{(i)}\cdot\mathbf{n} = 0$ and $\int_{\partial\Omega_i}(E_{D,\Delta}\mathbf{v}_\Delta)^{(i)}\cdot\mathbf{n} = 0$, where $\mathbf{n}$ is the outward normal of $\partial\Omega_i$. We can equivalently write $B_{0\Delta}^{(i)}\mathbf{v}_\Delta^{(i)} = 0$ and $B_{0\Delta}^{(i)}(E_{D,\Delta}\mathbf{v}_\Delta)^{(i)} = 0$
\end{ass}
\begin{ass}\label{ass2}
	There exists a positive constant C, which is independent of $H$, $h$ and the number of subdomains, such that
	\begin{align*}
		|\bar{R}_\Gamma(E_{D,\Gamma}\mathbf{v}_\Gamma)|_{1/2,\Gamma} \leq C \bigg(1+\log\left(\frac{Hk^2}{h}\right)\bigg)|\bar{R}_\Gamma\mathbf{v}_\Gamma|_{1/2,\Gamma}\text{,} \quad \forall\mathbf{v}_\Gamma \in \mathbf{V}_\Gamma.
	\end{align*} 
\end{ass}
With these two assumptions, we have the following results (proof of \ref{lemma4} in \cite{li2006bddc}):
\begin{lemma}\label{lemma4}
	Let Assumption 1 hold. Then $\widetilde{R}_D^T\mathbf{v}\in \widehat{\mathbf{V}}_{\Gamma,B}\times Q_0$, for any $\mathbf{v}\in \widetilde{\mathbf{V}}_{\Gamma,B}\times Q_0$.
\end{lemma}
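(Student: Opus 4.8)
The plan is to reduce the claim to a single flux identity for the velocity component and then to discharge it using the block structure of $\widetilde{B}_{0\Gamma}$ together with both parts of Assumption~\ref{ass1}. First I would write $\mathbf{v}=(\mathbf{v}_\Gamma,q_0)$ and observe that, since $\widetilde{R}_D$ is block diagonal, $\widetilde{R}_D^T\mathbf{v}=(\widetilde{R}_{D,\Gamma}^T\mathbf{v}_\Gamma,\,q_0)$. The pressure component is untouched and stays in $Q_0$, and $\widetilde{R}_{D,\Gamma}^T$ maps $\widetilde{\mathbf{V}}_\Gamma$ into $\widehat{\mathbf{V}}_\Gamma$, so continuity across the interface is restored automatically by construction. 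Hence the only thing left to verify is the benign (divergence) constraint $\widehat{B}_{0\Gamma}\bigl(\widetilde{R}_{D,\Gamma}^T\mathbf{v}_\Gamma\bigr)=0$, which is exactly what places $\widetilde{R}_{D,\Gamma}^T\mathbf{v}_\Gamma$ in $\widehat{\mathbf{V}}_{\Gamma,B}$.

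To attack that identity I would use the assembly relation $\widehat{B}_{0\Gamma}=\widetilde{B}_{0\Gamma}\widetilde{R}_\Gamma$ stated above, together with the definition $E_{D,\Gamma}=\widetilde{R}_\Gamma\widetilde{R}_{D,\Gamma}^T$ of the averaging operator, to rewrite
\[
\widehat{B}_{0\Gamma}\widetilde{R}_{D,\Gamma}^T\mathbf{v}_\Gamma
=\widetilde{B}_{0\Gamma}\,\widetilde{R}_\Gamma\widetilde{R}_{D,\Gamma}^T\mathbf{v}_\Gamma
=\widetilde{B}_{0\Gamma}\,E_{D,\Gamma}\mathbf{v}_\Gamma .
\]
Now I would split into primal and dual parts. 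Since $E_{D,\Gamma}$ leaves the primal component fixed and averages only the dual one, $E_{D,\Gamma}\mathbf{v}_\Gamma=\mathbf{v}_\Pi\oplus E_{D,\Delta}\mathbf{v}_\Delta$, and the $i$-th (subdomain) component of $\widetilde{B}_{0\Gamma}$ acts as $B_{0\Pi}^{(i)}R_\Pi^{(i)}\mathbf{v}_\Pi+B_{0\Delta}^{(i)}(E_{D,\Delta}\mathbf{v}_\Delta)^{(i)}$.

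Finally I would invoke Assumption~\ref{ass1}. Its second part gives $B_{0\Delta}^{(i)}(E_{D,\Delta}\mathbf{v}_\Delta)^{(i)}=0$, so the dual contribution drops out. For the primal contribution I would use the hypothesis $\mathbf{v}_\Gamma\in\widetilde{\mathbf{V}}_{\Gamma,B}$, i.e.\ $\widetilde{B}_{0\Gamma}\mathbf{v}_\Gamma=0$, which reads $B_{0\Pi}^{(i)}R_\Pi^{(i)}\mathbf{v}_\Pi+B_{0\Delta}^{(i)}\mathbf{v}_\Delta^{(i)}=0$ on each subdomain; combined with the first part of Assumption~\ref{ass1}, $B_{0\Delta}^{(i)}\mathbf{v}_\Delta^{(i)}=0$, this forces $B_{0\Pi}^{(i)}R_\Pi^{(i)}\mathbf{v}_\Pi=0$ for every $i$. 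Both terms therefore vanish subdomain by subdomain, so $\widetilde{B}_{0\Gamma}E_{D,\Gamma}\mathbf{v}_\Gamma=0$ and the constraint is proved. The one point requiring care — the real content of the argument — is recognizing that the two halves of Assumption~\ref{ass1} play different roles: the second half removes the averaged dual flux, while the first half is what lets us transfer the benign condition on the un-averaged vector into the statement that the primal flux already vanishes. Collecting these observations over all $i$ yields $\widetilde{R}_D^T\mathbf{v}\in\widehat{\mathbf{V}}_{\Gamma,B}\times Q_0$.
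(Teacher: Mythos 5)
Your argument is correct: reducing the claim to $\widehat{B}_{0\Gamma}\widetilde{R}_{D,\Gamma}^{T}\mathbf{v}_\Gamma=\widetilde{B}_{0\Gamma}E_{D,\Gamma}\mathbf{v}_\Gamma=0$, killing the averaged dual flux with the second half of Assumption~\ref{ass1}, and extracting $B_{0\Pi}^{(i)}R_\Pi^{(i)}\mathbf{v}_\Pi=0$ from the benign condition together with the first half is exactly the standard proof. The paper itself does not reproduce this proof but defers to \cite{li2006bddc}, and your write-up is essentially the argument given there, so there is nothing to flag.
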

\begin{lemma}\label{lemma5}
	Let Assumptions 1 and 2 hold. Then there exists a positive constant C, which is independent of $H$, $h$ and the number of subdomains, such that 
	\begin{align*}
		|E_D\mathbf{v}|_{\widetilde{S}} \leq C \frac{1}{\tilde{\beta}}\bigg(1+\log\left(\frac{Hk^2}{h}\right)\bigg)|\mathbf{v}|_{\widetilde{S}}, \quad \forall \mathbf{v}=(\mathbf{v}_{\Gamma},q_0)\in \widetilde{\mathbf{V}}_{\Gamma,B}\times Q_0\text{,}
	\end{align*}
	where $\tilde{\beta}$ is the inf-sup stability constant of \eqref{infsupdisc}.
\end{lemma}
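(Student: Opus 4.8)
The plan is to chain together the norm equivalence of Lemma \ref{Bramble} with the stability estimate of Assumption \ref{ass2}, exploiting the seminorm identities that hold on the benign subspaces. The first thing I would establish is that the averaged field stays benign. Writing the decomposition $E_D\mathbf{v} = \mathbf{v}_\Pi \oplus E_{D,\Delta}\mathbf{v}_\Delta$ and using Assumption \ref{ass1}, in particular the relation $B_{0\Delta}^{(i)}(E_{D,\Delta}\mathbf{v}_\Delta)^{(i)} = 0$ together with $B_{0\Delta}^{(i)}\mathbf{v}_\Delta^{(i)} = 0$, one checks that $\widetilde{B}_{0\Gamma}E_{D,\Gamma}\mathbf{v}_\Gamma = 0$, so that $E_D\mathbf{v}$ again belongs to $\widetilde{\mathbf{V}}_{\Gamma,B}\times Q_0$ (this is the content combining Lemma \ref{lemma4} with Assumption \ref{ass1}). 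Only on this benign subspace is $\widetilde{S}$ positive semi-definite, and only there do the seminorms collapse to the velocity part, giving $|E_D\mathbf{v}|_{\widetilde{S}}^2 = |\overline{R}_\Gamma E_{D,\Gamma}\mathbf{v}_\Gamma|_{S_\Gamma}^2$ and $|\mathbf{v}|_{\widetilde{S}}^2 = |\overline{R}_\Gamma\mathbf{v}_\Gamma|_{S_\Gamma}^2$.

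Having reduced everything to the velocity $S_\Gamma$-seminorm, I would pass to the $H^{1/2}$-seminorm through the upper bound in Lemma \ref{Bramble}, applied to the averaged field:
\begin{equation*}
	|\overline{R}_\Gamma E_{D,\Gamma}\mathbf{v}_\Gamma|_{S_\Gamma}^2 \leq \frac{1}{c_1\tilde{\beta}^2}\,|\overline{R}_\Gamma E_{D,\Gamma}\mathbf{v}_\Gamma|_{1/2,\Gamma}^2 .
\end{equation*}
Then Assumption \ref{ass2} controls the averaged $H^{1/2}$-seminorm by the polylogarithmic factor times $|\overline{R}_\Gamma\mathbf{v}_\Gamma|_{1/2,\Gamma}$, and the reverse inequality of Lemma \ref{Bramble} returns to the $S_\Gamma$-seminorm, $|\overline{R}_\Gamma\mathbf{v}_\Gamma|_{1/2,\Gamma}^2 \leq c_2\,|\overline{R}_\Gamma\mathbf{v}_\Gamma|_{S_\Gamma}^2 = c_2\,|\mathbf{v}|_{\widetilde{S}}^2$. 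Chaining the three estimates and taking square roots yields the claim: the single factor $1/\tilde{\beta}$ emerges from the $1/\tilde{\beta}^2$ sitting inside the squared inequality, while the final constant $C$ absorbs $\sqrt{c_2/c_1}$ and the constant of Assumption \ref{ass2}.

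The algebra of chaining the three inequalities is routine; the step that genuinely carries the argument, and the main obstacle, is the first one, namely verifying that $E_D$ preserves the benign subspace. This is precisely where Assumption \ref{ass1} is indispensable: without the vanishing normal flux of both $\mathbf{v}_\Delta$ and its average $E_{D,\Delta}\mathbf{v}_\Delta$ on each $\partial\Omega_i$, the pressure-coupling block would destroy the seminorm identities $|\cdot|_{\widetilde{S}}^2 = |\overline{R}_\Gamma(\cdot)|_{S_\Gamma}^2$, and the seminorm of $E_D\mathbf{v}$ could not be reduced to a pure velocity $H^{1/2}$-seminorm amenable to Lemma \ref{Bramble} and Assumption \ref{ass2}. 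Finally I would remark that all constants stay independent of $H$, $h$ and the number of subdomains $N$, since Lemma \ref{Bramble} and Assumption \ref{ass2} are themselves uniform in these quantities, so that the only essential dependence is the explicit polylogarithmic factor $1+\log(Hk^2/h)$ and the inf-sup constant $\tilde{\beta}$.
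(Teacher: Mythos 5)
Your proposal is correct and follows essentially the same route as the paper's proof: establish via Lemma \ref{lemma4} and Assumption \ref{ass1} that $E_D\mathbf{v}$ remains in the benign subspace, reduce both seminorms to the velocity $S_\Gamma$-seminorm, and then chain the two inequalities of Lemma \ref{Bramble} around Assumption \ref{ass2} to obtain the polylogarithmic bound with the $1/\tilde{\beta}$ factor. No substantive differences from the paper's argument.
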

\begin{proof}
	Given any $\mathbf{v} = (\mathbf{v}_\Gamma,q_0) \in \widetilde{\mathbf{V}}_{\Gamma,B}\times Q_0$, we know, from Lemma \ref{lemma4}, that $\widetilde{R}_D^T\mathbf{v}\in \widehat{\mathbf{V}}_{\Gamma,B}\times Q_0$. Therefore, $E_D \mathbf{v} = \widetilde{R} \widetilde{R}_D^T\mathbf{v}\in \widetilde{\mathbf{V}}_{\Gamma,B}\times Q_0$. We have from the definition of the $\widetilde{S}$-seminorm, that
	\begin{equation}
		\vert E_D \mathbf{v} \vert^2_{\widetilde{S}} = \Vert E_{D,\Gamma}\mathbf{v}_\Gamma\Vert^2_{\widetilde{S}_\Gamma} = \vert \bar{R}_\Gamma(E_{D,\Gamma}\mathbf{v}_\Gamma)\vert^2_{S_\Gamma} \leq C\frac{1}{\tilde{\beta}^2}\vert\bar{R}_\Gamma(E_{D,\Gamma}\mathbf{v}_\Gamma)\vert^2_{1/2,\Gamma}\text{,}
	\end{equation}
	where the last inequality follows from Lemma \ref{Bramble}.
	We have, from Assumption 2 and Lemma \ref{Bramble}
	\begin{equation}
		\begin{split}
			\vert \bar{R}_\Gamma(E_{D,\Gamma}\mathbf{v}_\Gamma)\vert^2_{1/2,\Gamma} \leq C\bigg(1+ \log\left(\frac{Hk^2}{h}\right)\bigg)^2\vert \bar{R}_\Gamma \mathbf{v}_\Gamma \vert ^2_{1/2,\Gamma}\\
			\leq C\bigg(1+ \log\left(\frac{Hk^2}{h}\right)\bigg)^2\vert \bar{R}_\Gamma \mathbf{v}_\Gamma \vert ^2_{S_\Gamma} \leq C\bigg(1+ \log\left(\frac{Hk^2}{h}\right)\bigg)^2\Vert \mathbf{v}_\Gamma \Vert ^2_{\widetilde{S}_\Gamma}\textit{.}
		\end{split}
	\end{equation}
	Consequently we have 
	\begin{equation}
		\vert E_D \mathbf{v} \vert^2_{\widetilde{S}} \leq C\frac{1}{\tilde{\beta}^2}\bigg(1+ \log\left(\frac{Hk^2}{h}\right)\bigg)^2\Vert \mathbf{v}_\Gamma \Vert ^2_{\widetilde{S}_\Gamma} = C \frac{1}{\tilde{\beta}^2}\bigg(1+ \log\left(\frac{Hk^2}{h}\right)\bigg)^2\vert \mathbf{v} \vert ^2_{\widetilde{S}}\textit{.}
	\end{equation}
\end{proof}

We have the following lemma (proof in \cite{li2006bddc}):

\begin{lemma}\label{lemma6}
	Any vector of the form $\mathbf{u} = (\mathbf{0},p_0)\in \widehat{\mathbf{V}}_{\Gamma,B}\times Q_0$ is an eigenvector of the preconditioner operator $M^{-1}\widehat{S}$ with eigenvalue equal to 1.
\end{lemma}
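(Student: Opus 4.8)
The plan is to reduce the eigenvector identity $M^{-1}\widehat{S}\mathbf{u} = \mathbf{u}$ to a single algebraic relation between the assembly operators and the pressure block, and then to extract that relation from Assumption \ref{ass1}. Since the velocity component of $\mathbf{u} = (\mathbf{0},p_0)$ vanishes, I would regard $\mathbf{u}$ simultaneously as an element of $\widehat{\mathbf{V}}_{\Gamma,B}\times Q_0$ and of $\widetilde{\mathbf{V}}_{\Gamma,B}\times Q_0$ (the two coincide when the velocity is zero). Writing $M^{-1} = \widetilde{R}_D^T\widetilde{S}^{-1}\widetilde{R}_D$, it then suffices to prove the two facts $\widetilde{R}_D\widehat{S}\mathbf{u} = \widetilde{S}\mathbf{u}$ and $\widetilde{R}_D^T\mathbf{u} = \mathbf{u}$: indeed the first gives $\widetilde{S}^{-1}\widetilde{R}_D\widehat{S}\mathbf{u} = \mathbf{u}$ by invertibility of $\widetilde{S}$, and the second then yields $M^{-1}\widehat{S}\mathbf{u} = \widetilde{R}_D^T\mathbf{u} = \mathbf{u}$. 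The second fact is immediate, because $\widetilde{R}_D^T = \mathrm{diag}(\widetilde{R}_{D,\Gamma}^T, I)$ and $\widetilde{R}_{D,\Gamma}^T\mathbf{0} = \mathbf{0}$, so only the pressure block $p_0$ survives and it is left untouched.

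Next I would compute both sides of the first identity. Using the saddle-point form of $\widehat{S}$ and $\widetilde{S}$ and the vanishing of the velocity, one gets $\widehat{S}\mathbf{u} = (\widehat{B}_{0\Gamma}^T p_0,\,0)$ and $\widetilde{S}\mathbf{u} = (\widetilde{B}_{0\Gamma}^T p_0,\,0)$, so the pressure components already agree and only the velocity components must be matched. Applying $\widetilde{R}_D$ to $\widehat{S}\mathbf{u}$ and invoking the assembly relation $\widehat{B}_{0\Gamma} = \widetilde{B}_{0\Gamma}\widetilde{R}_\Gamma$ together with $E_{D,\Gamma}^T = \widetilde{R}_{D,\Gamma}\widetilde{R}_\Gamma^T$, the velocity component of $\widetilde{R}_D\widehat{S}\mathbf{u}$ becomes $\widetilde{R}_{D,\Gamma}\widehat{B}_{0\Gamma}^T p_0 = \widetilde{R}_{D,\Gamma}\widetilde{R}_\Gamma^T\widetilde{B}_{0\Gamma}^T p_0 = E_{D,\Gamma}^T\widetilde{B}_{0\Gamma}^T p_0$. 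Thus the whole identity collapses to the single claim $E_{D,\Gamma}^T\widetilde{B}_{0\Gamma}^T p_0 = \widetilde{B}_{0\Gamma}^T p_0$, or equivalently, after transposition, $\widetilde{B}_{0\Gamma}E_{D,\Gamma} = \widetilde{B}_{0\Gamma}$.

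This last identity is the heart of the argument and the place where Assumption \ref{ass1} enters. I would prove it by splitting an arbitrary $\mathbf{v}_\Gamma = \mathbf{v}_\Pi \oplus \mathbf{v}_\Delta \in \widetilde{\mathbf{V}}_\Gamma$ into its primal and dual parts and using the two structural facts recalled before the statement: first, that the averaging operator preserves the primal component, $E_{D,\Gamma}\mathbf{v}_\Gamma = \mathbf{v}_\Pi \oplus E_{D,\Delta}\mathbf{v}_\Delta$; and second, that $\widetilde{B}_{0\Gamma}$ acts as $\widetilde{B}_{0\Pi}$ on the primal part and as $B_{0\Delta}$ on the dual part. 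Assumption \ref{ass1} forces $B_{0\Delta}^{(i)}\mathbf{v}_\Delta^{(i)} = 0$ for every dual function, and since $E_{D,\Delta}\mathbf{v}_\Delta$ is again a dual function the same assumption gives $B_{0\Delta}^{(i)}(E_{D,\Delta}\mathbf{v}_\Delta)^{(i)} = 0$. Hence both $\widetilde{B}_{0\Gamma}\mathbf{v}_\Gamma$ and $\widetilde{B}_{0\Gamma}E_{D,\Gamma}\mathbf{v}_\Gamma$ reduce to the same primal contribution $\widetilde{B}_{0\Pi}\mathbf{v}_\Pi$, proving $\widetilde{B}_{0\Gamma}E_{D,\Gamma} = \widetilde{B}_{0\Gamma}$.

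I expect the routine part to be the bookkeeping of which restriction and assembly operator acts on which block, while the genuine content — and the only nontrivial step — is recognizing that the zero-net-flux condition of Assumption \ref{ass1} is exactly what makes the pressure residual $\widetilde{B}_{0\Gamma}^T p_0$ invariant under the dual averaging $E_{D,\Gamma}$. Once $\widetilde{B}_{0\Gamma}E_{D,\Gamma} = \widetilde{B}_{0\Gamma}$ is in hand, the two reductions above close the proof and the eigenvalue is seen to be exactly $1$.
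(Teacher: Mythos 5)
Your proof is correct, and it is essentially the standard argument: the paper itself gives no proof for this lemma (it defers to Li and Widlund), and the proof there follows the same route you take, reducing everything to $\widetilde{R}_{D,\Gamma}\widehat{B}_{0\Gamma}^T p_0=\widetilde{B}_{0\Gamma}^T p_0$, i.e.\ to $\widetilde{B}_{0\Gamma}E_{D,\Gamma}=\widetilde{B}_{0\Gamma}$, which is exactly what Assumption~\ref{ass1} delivers. One small remark: the lemma as stated omits ``Let Assumption~\ref{ass1} hold'' from its hypotheses, but your analysis correctly shows that this hypothesis is genuinely needed (without it $\widetilde{S}^{-1}\widetilde{R}_D\widehat{S}\mathbf{u}$ would acquire a nonzero velocity component), which is consistent with the paper's later observation that the vertex-only coarse space violates Assumption~\ref{ass1} and forfeits the spectral estimates.
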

\begin{theorem}
	Let Assumptions 1 and 2 hold. The preconditioned operator $M^{-1}\widehat{S}$ is then symmetric, positive definite
	with respect to the bilinear form $\langle\cdot,\cdot\rangle_{\widehat{S}}$ on the benign space $\widehat{\mathbf{V}}_{\Gamma,B}\times Q_0$. Its minimum eigenvalue is 1 and its maximum eigenvalue is bounded by
	\begin{align}
		C \frac{1}{\tilde{\beta}^2}\bigg(1+\log\left(\frac{Hk^2}{h}\right)\bigg)^2\text{.}
	\end{align}
	Here, $C$ is a constant which is independent of $H$, $h$ and the number of subdomains, and $\tilde{\beta}$ is the inf-sup stability constant defined in \eqref{infsupdisc}.
\end{theorem}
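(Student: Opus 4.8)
The plan is to transplant the abstract BDDC argument of \cite{li2006bddc} into the present VEM setting, exploiting that all the geometry-dependent work has already been absorbed into Lemmas \ref{Bramble} and \ref{lemma5}. Everything rests on two structural identities recorded above: the factorization $\widehat{S} = \widetilde{R}^T\widetilde{S}\widetilde{R}$, and the partition-of-unity property $\widetilde{R}_D^T\widetilde{R} = I$ on $\widehat{\mathbf{V}}_\Gamma\times Q_0$, which is exactly what makes $E_D = \widetilde{R}\widetilde{R}_D^T$ a projection with $E_D\widetilde{R} = \widetilde{R}$. I would first dispatch the symmetry and definiteness claims. Since $M^{-1} = \widetilde{R}_D^T\widetilde{S}^{-1}\widetilde{R}_D$ and $\widehat{S}$ are both symmetric, the computation $\langle M^{-1}\widehat{S}\mathbf{u},\mathbf{w}\rangle_{\widehat{S}} = \mathbf{u}^T\widehat{S}M^{-1}\widehat{S}\mathbf{w} = \langle\mathbf{u},M^{-1}\widehat{S}\mathbf{w}\rangle_{\widehat{S}}$ shows self-adjointness in the $\widehat{S}$-bilinear form. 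For definiteness I would invoke the semi-definiteness lemma above: on the benign space $\langle\cdot,\cdot\rangle_{\widehat{S}}$ is positive, degenerate only in the pressure directions $(\mathbf{0},p_0)$, which Lemma \ref{lemma6} identifies as eigenvectors of eigenvalue $1$, so that on their $\widehat{S}_\Gamma$-complement the form is a genuine inner product and $M^{-1}\widehat{S}$ is honestly s.p.d. Hence the spectrum is real and captured by the generalized Rayleigh quotient, and it remains only to bound it below by $1$ and above by the stated quantity.

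The second step is to rewrite that Rayleigh quotient purely in terms of $E_D$. Setting $\widetilde{\mathbf{w}} = \widetilde{R}\mathbf{w}$ for $\mathbf{w}\in\widehat{\mathbf{V}}_{\Gamma,B}\times Q_0$, and using $\widehat{S} = \widetilde{R}^T\widetilde{S}\widetilde{R}$, $M^{-1} = \widetilde{R}_D^T\widetilde{S}^{-1}\widetilde{R}_D$ together with $\widetilde{R}_D\widetilde{R}^T = E_D^T$, I expect the quotient to collapse to
\begin{equation*}
\frac{\langle M^{-1}\widehat{S}\mathbf{w},\mathbf{w}\rangle_{\widehat{S}}}{\langle\mathbf{w},\mathbf{w}\rangle_{\widehat{S}}} = \frac{|E_D^{*}\widetilde{\mathbf{w}}|_{\widetilde{S}}^2}{|\widetilde{\mathbf{w}}|_{\widetilde{S}}^2},
\end{equation*}
where $E_D^{*} = \widetilde{S}^{-1}E_D^T\widetilde{S}$ is the $\widetilde{S}$-adjoint of $E_D$. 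To make this manipulation legitimate I must keep every intermediate vector inside a benign subspace, so that $\widetilde{S}$ and $\widehat{S}$ really are inner products on the relevant complement and not merely seminorms; this is precisely guaranteed by Lemma \ref{lemma4}, which ensures $\widetilde{R}_D^T\mathbf{v}\in\widehat{\mathbf{V}}_{\Gamma,B}\times Q_0$, and by Assumption \ref{ass1}, which keeps the pressure component $q_0$ untouched by the averaging.

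With the quotient in this form both bounds are short. For the lower bound, $E_D\widetilde{\mathbf{w}} = \widetilde{\mathbf{w}}$ gives $\langle E_D^{*}\widetilde{\mathbf{w}},\widetilde{\mathbf{w}}\rangle_{\widetilde{S}} = \langle\widetilde{\mathbf{w}},E_D\widetilde{\mathbf{w}}\rangle_{\widetilde{S}} = |\widetilde{\mathbf{w}}|_{\widetilde{S}}^2$, so Cauchy--Schwarz yields $|\widetilde{\mathbf{w}}|_{\widetilde{S}}\le|E_D^{*}\widetilde{\mathbf{w}}|_{\widetilde{S}}$ and hence $\lambda\ge 1$; together with Lemma \ref{lemma6}, which produces eigenvectors $(\mathbf{0},p_0)$ of eigenvalue exactly $1$, this pins the minimum eigenvalue at $1$. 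For the upper bound, $\|E_D^{*}\|_{\widetilde{S}} = \|E_D\|_{\widetilde{S}}$ reduces the estimate to $\lambda\le\|E_D\|_{\widetilde{S}}^2$, and Lemma \ref{lemma5} supplies $\|E_D\|_{\widetilde{S}}\le C\tilde{\beta}^{-1}\big(1+\log(Hk^2/h)\big)$, giving the claimed maximal eigenvalue bound $C\tilde{\beta}^{-2}\big(1+\log(Hk^2/h)\big)^2$.

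I expect the main obstacle to be the algebraic bookkeeping of the saddle-point structure rather than any analytic estimate: I must verify that the identities $\widehat{S} = \widetilde{R}^T\widetilde{S}\widetilde{R}$ and $\widetilde{R}_D\widetilde{R}^T = E_D^T$ survive the extra pressure rows and columns (the $\widetilde{B}_{0\Gamma}$ and $B_{0\Delta}^{(i)}$ blocks), and that Assumption \ref{ass1} is exactly the condition needed for $E_D$ to map $\widetilde{\mathbf{V}}_{\Gamma,B}\times Q_0$ into itself, so that the reduction of the Rayleigh quotient to $|E_D^{*}\widetilde{\mathbf{w}}|_{\widetilde{S}}^2/|\widetilde{\mathbf{w}}|_{\widetilde{S}}^2$ is valid. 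Once this is in place, the genuinely quantitative content is entirely inherited from Lemma \ref{lemma5} (and through it from Lemma \ref{Bramble}), and the theorem follows.
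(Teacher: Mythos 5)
Your proposal is correct and follows essentially the same route as the paper: the paper's auxiliary vector $\mathbf{v}=\widetilde{S}^{-1}\widetilde{R}_D\widehat{S}\mathbf{u}$ is exactly your $E_D^{*}\widetilde{R}\mathbf{u}$, its lower bound is your Cauchy--Schwarz identity $\langle E_D^{*}\widetilde{\mathbf{w}},\widetilde{\mathbf{w}}\rangle_{\widetilde{S}}=|\widetilde{\mathbf{w}}|_{\widetilde{S}}^2$, and its upper bound is the same $\|E_D^{*}\|_{\widetilde{S}}=\|E_D\|_{\widetilde{S}}$ computation written out with Lemma \ref{lemma5}. The only difference is presentational: you phrase the argument in adjoint-operator language, while the paper carries out the identical manipulations on explicit vectors.
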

\begin{proof}
	We know from Lemma \ref{lemma6}, that any vector of the form $\mathbf{u} = (\mathbf{0},p_0)\in \widehat{\mathbf{V}}_{\Gamma,B}\times Q_0$ is an eigenvector of the preconditioned operator $M^{-1}\widehat{S}$ with an eigenvalue equal to 1. It is sufficient to find lower and upper bounds of the quotient $\big \langle M^{-1}\widehat{S} \mathbf{u},\mathbf{u}\big \rangle_{\widehat{S}} / \langle\mathbf{u},\mathbf{u}\rangle_{\widehat{S}}$, for any $\mathbf{u} = (\mathbf{u}_\Gamma,p_0)\in \widehat{\mathbf{V}}_{\Gamma,B}\times Q_0$, where $\mathbf{u}_\Gamma$ is non zero and therefore $\langle\mathbf{u},\mathbf{u}\rangle_{\widehat{S}}> 0$.\\
	\textit{Lower bound}: Given  $\mathbf{u}\in \widehat{\mathbf{V}}_{\Gamma,B}\times Q_0$, let
	\begin{equation}\label{25}
		\mathbf{v} = \widetilde{S}^{-1} \widetilde{R}_D \widetilde{S} \mathbf{u} \in \widetilde{\mathbf{V}}_{\Gamma,B}\times Q_0 \text{.}
	\end{equation}
	We have from the fact that $\widetilde{R}^T \widetilde{R}_D = \widetilde{R}^T_D \widetilde{R} = I$, 
	\begin{equation}\label{26}
		\langle\mathbf{u},\mathbf{u}\rangle_{\widehat{S}} = \mathbf{u}^T \widehat{S} \widetilde{R}^T_D \widetilde{R} \mathbf{u} = \mathbf{u}^T \widehat{S} \widetilde{R}^T_D \widetilde{S}^{-1} \widetilde{S} \widetilde{R} \mathbf{u} = \langle\mathbf{v},\widetilde{R}\mathbf{u}\rangle_{\widetilde{S}}.
	\end{equation}
	From the Cauchy-Schwartz inequality and the fact that $\widehat{S} = \widetilde{R}^T \widetilde{S} \widetilde{R}$, we find that
	\begin{equation}\label{27}
		\langle\mathbf{v},\widetilde{R}\mathbf{u}\rangle_{\widetilde{S}} \leq \langle\mathbf{v},\mathbf{v}\rangle^{1/2}_{\widetilde{S}}\langle\widetilde{R}\mathbf{u},\widetilde{R}\mathbf{u}\rangle^{1/2}_{\widetilde{S}} = \langle\mathbf{v},\mathbf{v}\rangle^{1/2}_{\widetilde{S}}\langle\mathbf{u},\mathbf{u}\rangle^{1/2}_{\widetilde{S}}\text{.}
	\end{equation}
	Therefore from \eqref{26} and \eqref{27},
	\begin{equation}\label{28}
		\langle\mathbf{u},\mathbf{u}\rangle_{\widetilde{S}} \leq \langle\mathbf{v},\mathbf{v}\rangle_{\widetilde{S}} \text{.}
	\end{equation}
	Since,
	\begin{equation}\label{29}
		\langle\mathbf{v},\mathbf{v}\rangle_{\widetilde{S}}=\mathbf{u}^T \widehat{S} \widetilde{R}^T_D \widetilde{S}^{-1} \widetilde{S} \widetilde{S}^{-1} \widetilde{R}_D \widehat{S} \mathbf{u} = \big \langle \mathbf{u},\widetilde{R}^T_D \widetilde{S}^{-1} \widetilde{R}_D \widehat{S} \mathbf{u}\rangle_{\widehat{S}} = \big \langle \mathbf{u}, M^{-1}\widehat{S}\mathbf{u}\big \rangle_{\widehat{S}}\text{,}
	\end{equation}
	we obtain, from equations \eqref{28} and \eqref{29}, that $\langle\mathbf{u},\mathbf{u}\rangle_{\widetilde{S}} \leq \big \langle \mathbf{u}, M^{-1}\widehat{S}\mathbf{u}\big \rangle_{\widehat{S}}$, which gives a lower bound of 1 for the eigenvalues. Then from Lemma \ref{lemma6}, we know that 1 is the minimum eigenvalue of the preconditioned operator.\\
	\textit{Upper bound}:  Given  $\mathbf{u}\in \widehat{\mathbf{V}}_{\Gamma,B}\times Q_0$, take $\mathbf{v}\in \widetilde{\mathbf{V}}_{\Gamma,B}\times Q_0$ as in \eqref{25}.
	We have, $\widetilde{R}^T_D \mathbf{v} = M^{-1}\widehat{S} \mathbf{u}$. Since $\widehat{S} = \widetilde{R}^T \widetilde{S} \widetilde{R}$ and by using Lemma \ref{lemma5}, we have
	\begin{equation}
		\begin{split}
			\langle M^{-1}\widehat{S}\mathbf{u}, M^{-1}\widehat{S} \mathbf{u}\rangle_{\widehat{S}} = \langle \widetilde{R}^T_D\mathbf{v}, \widetilde{R}^T_D\mathbf{v} \rangle_{\widehat{S}} = \langle \widetilde{R} \widetilde{R}^T_D\mathbf{v}, \widetilde{R} \widetilde{R}^T_D\mathbf{v} \rangle_{\widetilde{S}} \\
			= 	|E_D\mathbf{v}|^2_{\widetilde{S}} \leq C^2 \frac{1}{\tilde{\beta}^2}\bigg(1+\log\left(\frac{Hk^2}{h}\right)\bigg)^2|\mathbf{v}|^2_{\widetilde{S}}\text{.}
		\end{split}
	\end{equation}
	Therefore from equation \eqref{29}, we have
	\begin{equation}\label{30}
		\langle M^{-1}\widehat{S}\mathbf{u}, M^{-1}\widehat{S} \mathbf{u}\rangle_{\widehat{S}} \leq C^2 \frac{1}{\tilde{\beta}^2}\bigg(1+\log\left(\frac{Hk^2}{h}\right)\bigg)^2 		\langle \mathbf{u}, M^{-1}\widehat{S} \mathbf{u}\rangle_{\widehat{S}}\text{.}
	\end{equation}
	Using the Cauchy-Schwarz inequality and equation \eqref{30}, we have 
	\begin{equation}
		\begin{split}
			\langle \mathbf{u}, M^{-1}\widehat{S} \mathbf{u}\rangle_{\widehat{S}} \leq \langle M^{-1}\widehat{S}\mathbf{u}, M^{-1}\widehat{S} \mathbf{u}\rangle^{1/2}_{\widehat{S}}\langle \mathbf{u}, \mathbf{u}\rangle^{1/2}_{\widehat{S}} \\
			\leq C \frac{1}{\tilde{\beta}}\bigg(1+\log\left(\frac{Hk^2}{h}\right) \bigg) \langle \mathbf{u},  \mathbf{u} \rangle^{1/2}_{\widehat{S}} \langle \mathbf{u}, M^{-1}\widehat{S} \mathbf{u}\rangle^{1/2}_{\widehat{S}} \text{.}
		\end{split}
	\end{equation}
	This gives
	\begin{equation}
		\langle \mathbf{u}, M^{-1}\widehat{S} \mathbf{u}\rangle_{\widehat{S}} \leq C \frac{1}{\tilde{\beta}^2}\bigg(1+\log\left(\frac{Hk^2}{h}\right) \bigg)^2 \langle \mathbf{u},  \mathbf{u} \rangle_{\widehat{S}} \text{,}
	\end{equation}
	and the upper bound of the theorem.
\end{proof}

\section{Satisfying the Assumptions}
\label{sec:9}
To satisfy the assumptions 1 and 2 in of the previous section we have to choose properly the primal constraints for the interface velocity space. In particular to satisfy the Assumption 1, it is not sufficient to choose as primal constraints the subdomain vertices, i.e. make both components of the velocity continuous at those nodes, but some extra edge constraints are necessary. To do so, for each interface edge $\Gamma_{ij}$, which is shared by a pair of subdomains $\Omega_i$ and $\Omega_j$, we impose
\begin{align}
	\int_{\Gamma_ij} \mathbf{v}_\Gamma^{(i)}\cdot\mathbf{n}_{ij} = \int_{\Gamma_ij} \mathbf{v}_\Gamma^{(j)}\cdot\mathbf{n}_{ij}
\end{align}
for a fixed selection of the normal $\mathbf{n}_{ij}$ of $\Gamma_{i,h}$.
Proceeding the discussion with this first choice, after changing the variables, the dual interface velocity component will vanish at the subdomain vertices and its normal component will have a weighted zero average over each $\Gamma_{ij}$, i.e. 
\begin{align*}
\int_{\Gamma_i} \mathbf{v_\Gamma}^{(i)}\cdot\mathbf{n}_{ij} = \int_{\Gamma_j} \mathbf{v_\Gamma}^{(j)}\cdot\mathbf{n}_{ij} = 0.
\end{align*}
By the definition of the average operator $E_{D,\Delta}$ we have that the average interface velocity is $E_{D,\Delta}\mathbf{v}_\Delta = \frac{1}{2}(\mathbf{v}_\Delta^{(i)}+\mathbf{v}_\Delta^{(j)})$ on each edge and hence 
\begin{align}
	\int_{\Gamma_ij} (E_{D,\Delta}\mathbf{v}_\Delta)^{(i)}\cdot\mathbf{n}_{ij} = 0\text{.}
\end{align}
In our codes we also choose a strong condition, we decide to require that the integral of both velocity components have common values across each interface edge
\begin{align}
	\int_{\Gamma_ij} \mathbf{v}_\Gamma^{(i)}\big|_x = \int_{\Gamma_j} \mathbf{v}_\Gamma^{(j)}\big|_x\text{,}\quad
	\int_{\Gamma_ij} \mathbf{v}_\Gamma^{(i)}\big|_y = \int_{\Gamma_j} \mathbf{v}_\Gamma^{(j)}\big|_y\text{.}
\end{align} 
In this way, we clearly satisfy Assumption \ref{ass1}. The advantage of this condition is that it is easiest to implement and, enlarging a little the coarse space, it yields a faster convergence.
Assumption 2 is also satisfied, requiring only vertices as primal constraints, and it derives directly from the following lemma, proved in \cite{bertoluzza2017bddc}:
\begin{lemma}\label{logest}
	For all $\mathbf{v}_\Gamma \in \widetilde{\mathbf{V}}_\Gamma$ we have:
	\begin{align}
		|E_{D,\Gamma}\mathbf{v}_\Gamma|_{1/2,\Gamma} \leq C \left(1+\log\left(\frac{Hk^2}{h}\right)\right)|\mathbf{v}_\Gamma|_{1/2,\Gamma}
	\end{align}
	with $C$ positive constant independent of $H$, $h$ and the number of subdomains.
\end{lemma}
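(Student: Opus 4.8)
The plan is to follow the classical iterative-substructuring route for FEM (as in \cite{toselli2006domain}, Section~4, and \cite{li2006bddc}), but to carry out the local edge estimates with the polynomial-degree-dependent constants dictated by the virtual element trace space. First I would localize the problem: since $E_{D,\Gamma}$ acts subdomain by subdomain and $|\mathbf{v}_\Gamma|_{1/2,\Gamma}^2 = \sum_i |\mathbf{v}_\Gamma^{(i)}|_{1/2,\Gamma_i}^2$, it suffices to bound $|(E_{D,\Gamma}\mathbf{v}_\Gamma)^{(i)}|_{1/2,\Gamma_i}$ on each $\Omega_i$ and then sum. With the pseudoinverse counting scaling \eqref{pseudoinv}, on an interface edge $\Gamma_{ij}$ shared by exactly two subdomains the averaging of a dual component reads $(E_{D,\Delta}\mathbf{v}_\Delta)^{(i)} = \tfrac12(\mathbf{v}_\Delta^{(i)}+\mathbf{v}_\Delta^{(j)})$, so the primal part is left unchanged and
\begin{equation*}
	(E_{D,\Gamma}\mathbf{v}_\Gamma)^{(i)} - \mathbf{v}_\Gamma^{(i)} = \tfrac12 \sum_{\Gamma_{ij}\subset\partial\Omega_i} \bigl(\mathbf{v}_\Delta^{(j)} - \mathbf{v}_\Delta^{(i)}\bigr),
\end{equation*}
each summand being a function on $\partial\Omega_i$ supported on the single edge $\Gamma_{ij}$ and vanishing on the remaining edges. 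Because the subdomain vertices are chosen as primal constraints, the dual jumps vanish at the endpoints of every edge, which is exactly the hypothesis required for the edge estimate below.

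The heart of the matter is then a single \emph{edge lemma}: for a discrete interface function $w$ that vanishes at the two endpoints of an edge $\mathcal{E}=\Gamma_{ij}$, the seminorm on $\partial\Omega_i$ of its extension by zero to the other edges is controlled by $\bigl(1+\log(Hk^2/h)\bigr)^2$ times a suitable trace norm of $w$. In the FEM case this rests on two ingredients, both of which I would re-derive for the VEM space: a discrete Sobolev inequality bounding $\|w\|_{L^\infty(\mathcal{E})}$ by $\bigl(1+\log(Hk^2/h)\bigr)^{1/2}|w|_{1/2}$, and an $H^{1/2}$-stability bound for the product with the edge cutoff $\theta_{\mathcal{E}}$, each contributing one logarithmic factor. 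The extra power of $k$, which upgrades $H/h$ to $Hk^2/h$, enters precisely here: the edge restrictions of functions in $\mathbf{\widehat{V}}_h$ are polynomials of degree $k$, so the pertinent inverse and trace inequalities on $\mathcal{E}$ carry a factor $k^2$, in complete analogy with the spectral and $hp$ analyses of FETI-DP/BDDC. Since the virtual element space admits no explicit nodal basis functions, I would not argue through nodal values but rather invoke the stability of VEM extensions and projections, following the tessellation-independent argument of \cite{bertoluzza2017bddc}.

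Granted the edge lemma, the conclusion is routine bookkeeping. I would bound each jump $\mathbf{v}_\Delta^{(j)} - \mathbf{v}_\Delta^{(i)}$ by the traces $\mathbf{v}_\Gamma^{(i)}$, $\mathbf{v}_\Gamma^{(j)}$ of the two neighboring subdomains, apply the triangle inequality $|(E_{D,\Gamma}\mathbf{v}_\Gamma)^{(i)}|_{1/2,\Gamma_i}\leq |\mathbf{v}_\Gamma^{(i)}|_{1/2,\Gamma_i}+|(E_{D,\Gamma}\mathbf{v}_\Gamma)^{(i)}-\mathbf{v}_\Gamma^{(i)}|_{1/2,\Gamma_i}$, and sum over the uniformly bounded number of edges of each $\Omega_i$ guaranteed by assumption $(\mathbf{S1})$, and then over all subdomains. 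The finite number of neighbors keeps $C$ independent of $N$, and collecting the terms yields the stated bound. I expect the edge lemma to be the genuine obstacle: obtaining the sharp $k$-dependence $\log(Hk^2/h)$ requires controlling the cutoff and discrete-Sobolev constants for degree-$k$ polynomials on the edges, and doing so in a setting where the standard nodal-basis computations are unavailable, so that one must lean entirely on the abstract stability properties of the virtual element construction.
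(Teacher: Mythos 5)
The first thing to say is that the paper does not actually prove this lemma: it is imported verbatim from \cite{bertoluzza2017bddc}, where a tessellation-independent proof for VEM is given, so there is no in-paper argument to compare yours against line by line. Your outline reconstructs the standard iterative-substructuring route that underlies that reference: localization to subdomains, the identity $(E_{D,\Gamma}\mathbf{v}_\Gamma)^{(i)} - \mathbf{v}_\Gamma^{(i)} = \tfrac12\sum_{\Gamma_{ij}}(\mathbf{v}_\Delta^{(j)}-\mathbf{v}_\Delta^{(i)})$ coming from the counting scaling \eqref{pseudoinv} on edges shared by exactly two subdomains, the vanishing of the dual jumps at the primal vertices, and the reduction to an edge estimate in the $H^{1/2}_{00}$ sense. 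That is structurally the right skeleton, and your identification of where the $k^2$ enters (inverse/trace constants for degree-$k$ polynomial traces on edges) is consistent with the form of the bound.

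The genuine gap is that the edge lemma you isolate \emph{is} the theorem. Everything else in your proposal is indeed routine bookkeeping; the discrete Sobolev inequality $\|w\|_{L^\infty(\mathcal{E})}\leq C(1+\log(Hk^2/h))^{1/2}|w|_{1/2}$ and the $H^{1/2}$-stability of multiplication by the edge cutoff are asserted, not derived, and for the virtual element trace space they cannot be obtained by the classical nodal-basis computations of \cite{toselli2006domain}. Proving them without a nodal basis -- via stable polynomial quasi-interpolants on the edge, VEM lifting/extension stability, and $hp$-type inverse estimates tracking the $k^2$ -- is precisely the content of \cite{bertoluzza2017bddc}, so as a self-contained argument your proposal stops exactly where the real work begins. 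A second, smaller omission: in the "bookkeeping" step, bounding the $H^{1/2}_{00}(\Gamma_{ij})$ norm of each jump by the seminorms $|\mathbf{v}_\Gamma^{(i)}|_{1/2,\Gamma_i}$ and $|\mathbf{v}_\Gamma^{(j)}|_{1/2,\Gamma_j}$ requires subtracting a common constant (an edge average or vertex value) from both traces before applying the edge lemma, since the $H^{1/2}$ seminorm does not control the $L^2$ or $L^\infty$ scale of the function; this is where the vanishing at the shared vertices and the discrete Sobolev inequality are actually consumed, and it should be made explicit rather than folded into the triangle inequality.
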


\section{Numerical Results}
\label{sec:10}
In this section, we provide some numerical tests to study the behavior of the BDDC preconditioner with respect to the mesh size $h$,
the number of subdomains $N$ and the shape of the polygonal mesh elements. We solve the Stokes equations on the unit square domain $\Omega = [0,1]\times[0,1]$, applying homogeneous Dirichlet boundary conditions on the whole $\partial\Omega$. 
We choose the load term $\mathbf{f}$ by imposing that the analytical solution is
\begin{align*}
	\mathbf{u}(x,y) = 
	\left(
	\begin{array}{c}
		-\sin(\pi x)\sin(\pi x) \sin(2\pi y)\\
		\sin(\pi y)\sin(\pi y)\sin(2\pi x)
	\end{array}
	\right),\quad p(x,y) = \sin(\pi x)-\sin(\pi y)\text{.}
\end{align*}

\begin{figure*}[!h]	
	\centering
	\subfigure[QUAD mesh.]{
		\includegraphics[width=5.5cm]{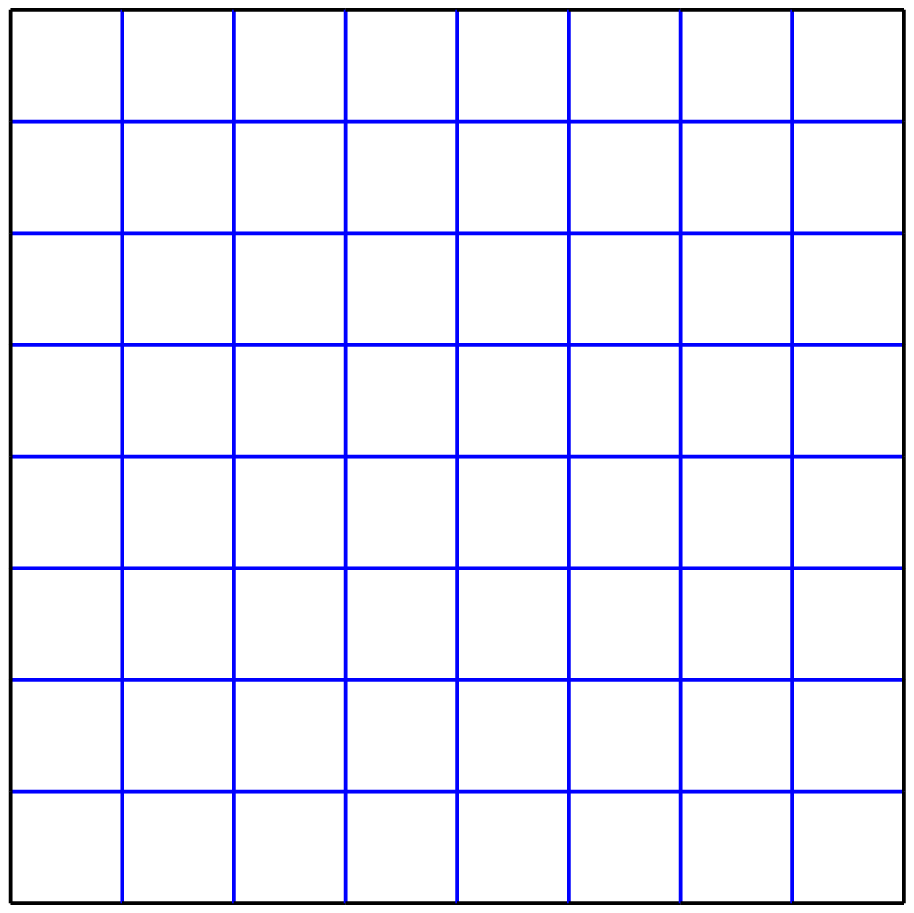}
	}
	\subfigure[HEXA mesh.]{
		\includegraphics[width=5.5cm]{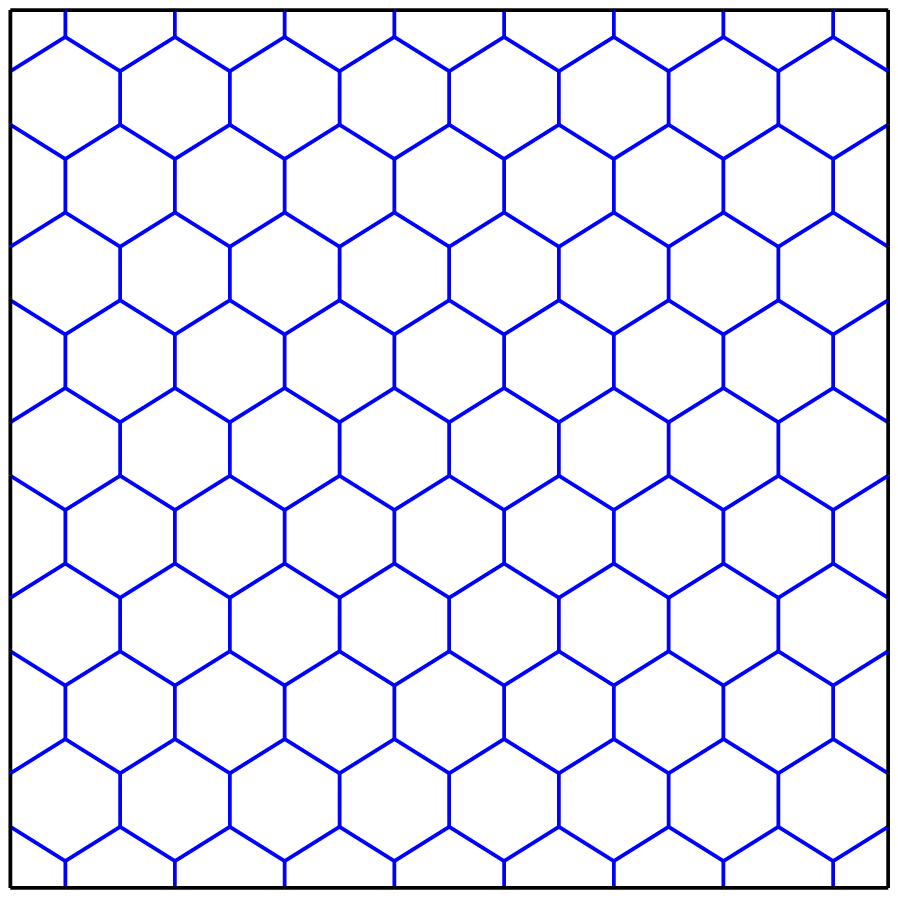}
	}
	\\
	\subfigure[TRI mesh.]{
		\includegraphics[width=5.5cm]{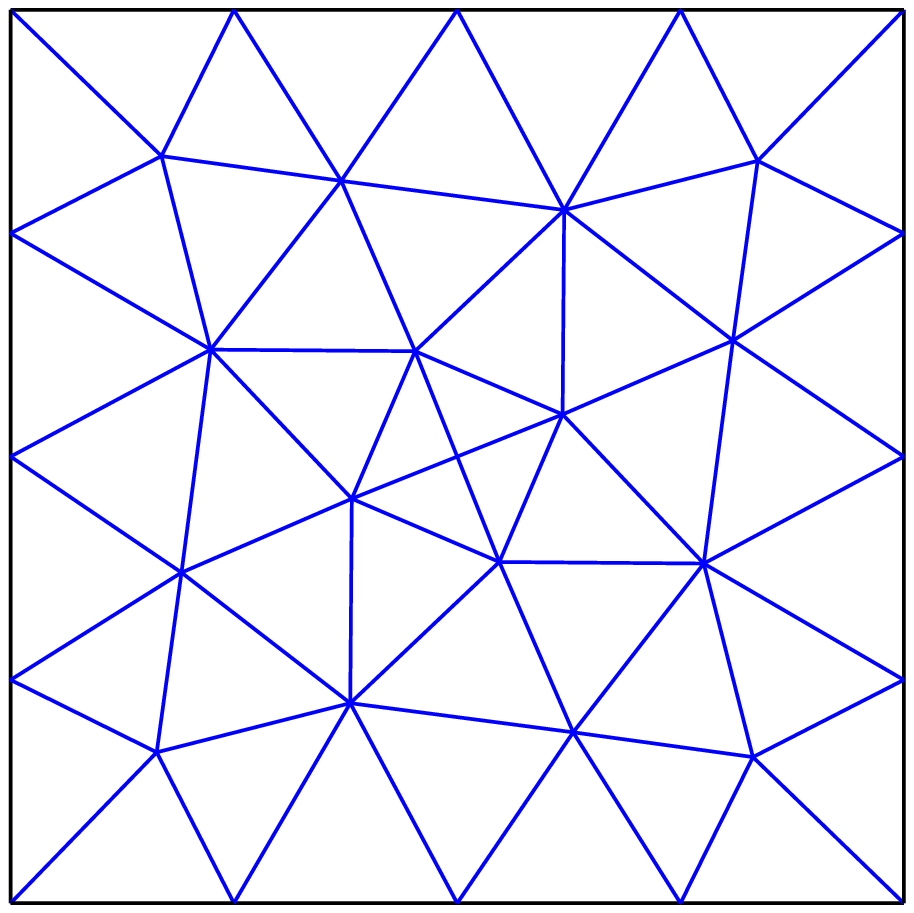}
	}
	\subfigure[CVT mesh.]{
		\includegraphics[width=5.5cm]{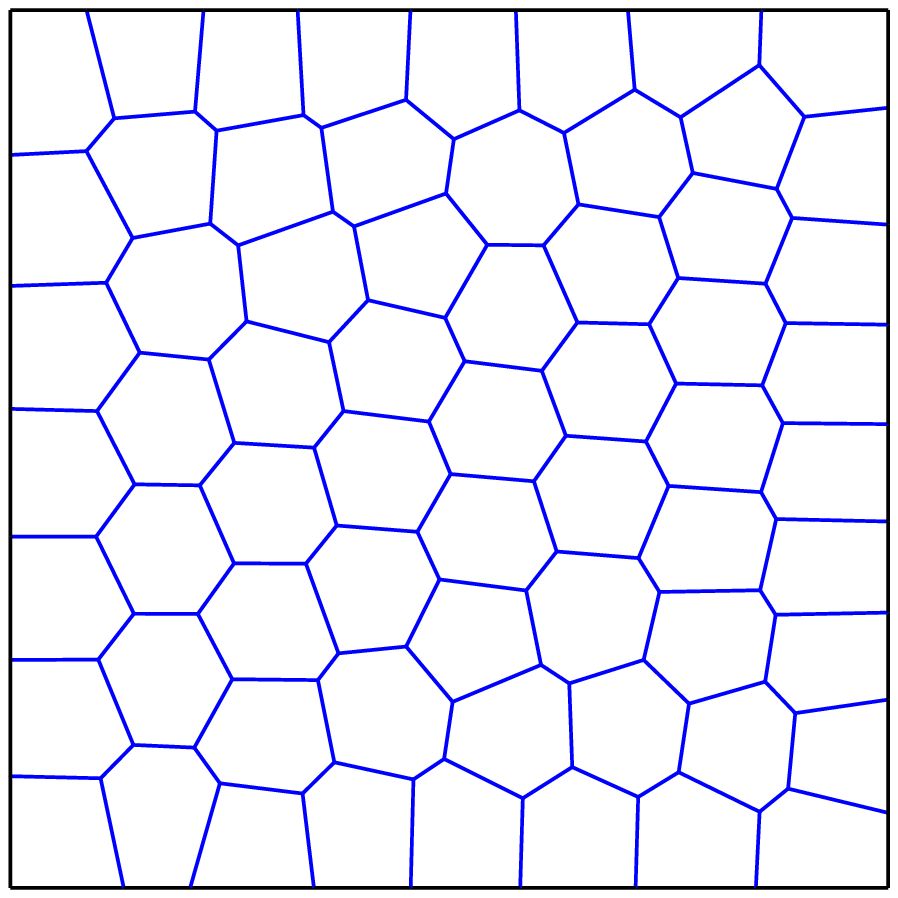}
	}
	\caption{Examples of the different type of meshes we consider in our numerical experiments.}
	\label{meshfig}
\end{figure*}

\begin{table}[htbp]
		\caption{GMRES iteration counts to solve the interface saddle-point problem  without preconditioner varying the mesh size $h$ and the number of subdomains $N=1/H^2$.}\label{GMRES}
	\subtable[QUAD meshes.\label{GMRESqua}]{%
		\scalebox{0.73}{
			\begin{tabular}{|c|c c c c c|}
				\hline
				\backslashbox{1/H}{1/h} & \minitab[c]{8 \\ it} & \minitab[c]{16 \\ it} & \minitab[c]{32 \\ it} & \minitab[c]{64 \\ it} & \minitab[c]{128 \\ it}\\
				\hline
				2 & 7 & 13 & 18 & 27 & 40 \\
				\hline
				4 & x & 40 & 57 & 79 & 99 \\
				\hline
				8 & x & x & 140 & 203 & 282 \\
				\hline
				16 & x & x & x & 295 & 437 \\
				\hline
				32 & x & x & x & x & 605 \\
				\hline
			\end{tabular}
		}
	}
	\subtable[HEXA meshes.\label{GMRESexa}]{%
		\scalebox{0.73}{
			\begin{tabular}{|c|c c c c c|}
				\hline
				\backslashbox{1/H}{1/h} & \minitab[c]{8 \\ it} & \minitab[c]{16 \\ it} & \minitab[c]{32 \\ it} & \minitab[c]{64 \\ it} & \minitab[c]{128 \\ it}\\
				\hline
				2 & 18 & 23 & 31 & 42 & 56\\
				\hline
				4 & x & 91 & 131 & 163 & 199\\
				\hline
				8 & x & x & 221 & 295 & 434\\
				\hline
				16 & x & x & x & 486 & 583\\
				\hline
				16 & x & x & x & x & 976\\
				\hline
			\end{tabular}
		}
	}\\
	\subtable[TRI meshes.\label{GMREStri}]{%
		\scalebox{0.73}{
			\begin{tabular}{|c|c c c c c|}
				\hline
				\backslashbox{1/H}{1/h} & \minitab[c]{8 \\ it} & \minitab[c]{16 \\ it} & \minitab[c]{32 \\ it} & \minitab[c]{64 \\ it} & \minitab[c]{128 \\ it}\\
				\hline
				2 & 12 & 17 & 25 & 38 & 52 \\
				\hline
				4 & x & 42 & 63 & 82 & 105 \\
				\hline
				8 & x & x & 127 & 190 & 286 \\
				\hline
				16 & x & x & x & 380 & 569 \\
				\hline
				32 & x & x & x & x & 1130 \\
				\hline
			\end{tabular}
		}
	}
	\subtable[CVT meshes.\label{GMRESvor}]{%
		\scalebox{0.73}{
			\begin{tabular}{|c|c c c c c|}
				\hline
				\backslashbox{1/H}{1/h} & \minitab[c]{8 \\ it} & \minitab[c]{16 \\ it} & \minitab[c]{32 \\ it} & \minitab[c]{64 \\ it} & \minitab[c]{128 \\ it}\\
				\hline
				2 & 31 & 44 & 55 & 79 & 101 \\
				\hline
				4 & x & 89 & 104 & 134 & 172 \\
				\hline
				8 & x & x & 267 & 307 & 336 \\
				\hline
				16 & x & x & x & 885 & 924 \\
				\hline
				32 & x & x & x & x & 2522 \\
				\hline
			\end{tabular}
		}
	}	
\end{table}

\begin{table}[htbp]
		\caption{PCG iteration counts to solve the interface saddle-point problem with BDDC preconditioner, varying the mesh size $h$ and the number of subdomains $N=1/H^2$. The primal coarse space is spanned only by the subdomain vertices.}\label{BDDCV}
	\subtable[QUAD meshes.\label{BDDCVqua}]{%
		\scalebox{0.73}{
			\begin{tabular}{|c|c c c c c|}
				\hline
				\backslashbox{1/H}{1/h} & \minitab[c]{8 \\ it} & \minitab[c]{16 \\ it} & \minitab[c]{32 \\ it} & \minitab[c]{64 \\ it} & \minitab[c]{128 \\ it}\\
				\hline
				2 & 7 & 7 & 7 & 7 & 8 \\
				\hline
				4 & x & 12 & 13 & 15 & 17 \\
				\hline
				8 & x & x & 22 & 23 & 29 \\
				\hline
				16 & x & x & x & 22 & 26 \\
				\hline
				32 & x & x & x & x & 22 \\
				\hline
			\end{tabular}
		}
	}
	\subtable[HEXA meshes.\label{BDDCVexa}]{%
		\scalebox{0.73}{
			\begin{tabular}{|c|c c c c c|}
				\hline
				\backslashbox{1/H}{1/h} & \minitab[c]{8 \\ it} & \minitab[c]{16 \\ it} & \minitab[c]{32 \\ it} & \minitab[c]{64 \\ it} & \minitab[c]{128 \\ it}\\
				\hline
				2 & 8 & 9 & 9 & 9 & 9\\
				\hline
				4 & x & 18 & 19 & 21 & 22\\
				\hline
				8 & x & x & 26 & 30 & 33\\
				\hline
				16 & x & x & x & 28 & 30\\
				\hline
				32 & x & x & x & x & 29\\
				\hline
			\end{tabular}
		}
	}\\
	\subtable[TRI meshes.\label{BDDCVtri}]{%
		\scalebox{0.73}{
			\begin{tabular}{|c|c c c c c|}
				\hline
				\backslashbox{1/H}{1/h} & \minitab[c]{8 \\ it} & \minitab[c]{16 \\ it} & \minitab[c]{32 \\ it} & \minitab[c]{64 \\ it} & \minitab[c]{128 \\ it}\\
				\hline
				2 & 8 & 8 & 8 & 9 & 9 \\
				\hline
				4 & x & 15 & 17 & 19 & 20 \\
				\hline
				8 & x & x & 18 & 24 & 27 \\
				\hline
				16 & x & x & x & 20 & 25 \\
				\hline
				32 & x & x & x & x & 20 \\
				\hline
			\end{tabular}
		}
	}
	\subtable[CVT meshes.\label{BDDCVvor}]{%
		\scalebox{0.73}{
			\begin{tabular}{|c|c c c c c|}
				\hline
				\backslashbox{1/H}{1/h} & \minitab[c]{8 \\ it} & \minitab[c]{16 \\ it} & \minitab[c]{32 \\ it} & \minitab[c]{64 \\ it} & \minitab[c]{128 \\ it}\\
				\hline
				2 & 16 & 17 & 17 & 17 & 17 \\
				\hline
				4 & x & 26 & 30 & 32 & 33 \\
				\hline
				8 & x & x & 36 & 41 & 41 \\
				\hline
				16 & x & x & x & 50 & 50 \\
				\hline
				32 & x & x & x & x & 51 \\
				\hline
			\end{tabular}
		}
	}	
\end{table}

\begin{table}[htbp]
		\caption{PCG iteration counts to solve the interface problem with BDDC preconditioner, varying the mesh size $h$ and the number of subdomains $N=1/H^2$. The primal coarse space is spanned by the subdomain vertices and only one basis function per subdomain edge.}\label{BDDCVN}
	\centering
	\subtable[QUAD meshes.\label{BDDCVNqua}]{%
		\scalebox{0.75}{
			\begin{tabular}{|c|c c| c c | c c | c c| c c |}
				\hline
				\backslashbox{1/H}{1/h} & \minitab[c]{ \\ $\kappa_2$} & \minitab[c]{8 \\ it} & \minitab[c]{ \\ $\kappa_2$} & \minitab[c]{16 \\ it} & \minitab[c]{ \\ $\kappa_2$} & \minitab[c]{32 \\ it} & \minitab[c]{ \\ $\kappa_2$} & \minitab[c]{64 \\ it} & \minitab[c]{ \\ $\kappa_2$} & \minitab[c]{128 \\ it} \\
				\hline
				2 & 1,82 & 7 & 2,11 & 7	& 2,37 & 7 & 2,80 & 7 & 3,24 & 8 \\
				\hline
				4 & x & x & 4,40 & 9 & 5,75 & 10 & 7,20 & 11 & 8,76 & 12 \\
				\hline
				8 & x & x & x & x & 5,78 & 13 & 7,81 & 15 & 10,09 & 16\\
				\hline
				16 & x & x & x & x	& x & x & 6,16 & 16 & 8,46 & 19 \\
				\hline
				32 & x & x & x & x & x & x & x & x & 6,29 & 16 \\
				\hline
			\end{tabular}
		}
	}
	\subtable[HEXA meshes.\label{BDDCNEexa}]{%
		\scalebox{0.75}{
			\begin{tabular}{|c|c c| c c | c c | c c| c c |}
				\hline
				\backslashbox{1/H}{1/h} & \minitab[c]{ \\ $\kappa_2$} & \minitab[c]{8 \\ it} & \minitab[c]{ \\ $\kappa_2$} & \minitab[c]{16 \\ it} & \minitab[c]{ \\ $\kappa_2$} & \minitab[c]{32 \\ it} & \minitab[c]{ \\ $\kappa_2$} & \minitab[c]{64 \\ it} & \minitab[c]{ \\ $\kappa_2$} & \minitab[c]{128 \\ it}\\
				\hline
				2 & 3,35 & 9 & 4,45 & 9 & 5,49 & 9 & 6,64 & 9 & 6,95 & 9 \\
				\hline
				4 & x & x & 5,32 & 13 & 6,86 & 14 & 8,45 & 15 & 10,17 & 16 \\
				\hline
				8 & x & x & x & x & 7,34 & 17 & 10,12 & 19 & 12,37 & 20\\
				\hline
				16 & x & x & x & x	& x & x & 7,97 & 18 & 11,07 & 22 \\
				\hline
				32 & x & x & x & x & x & x & x & x & 8,20 & 18 \\
				\hline
			\end{tabular}
		}
	}
	\subtable[TRI meshes.\label{BDDCNEtri}]{%
		\scalebox{0.75}{
			\begin{tabular}{|c|c c| c c | c c | c c| c c |}
				\hline
				\backslashbox{1/H}{1/h} & \minitab[c]{ \\ $\kappa_2$} & \minitab[c]{8 \\ it} & \minitab[c]{ \\ $\kappa_2$} & \minitab[c]{16 \\ it} & \minitab[c]{ \\ $\kappa_2$} & \minitab[c]{32 \\ it} & \minitab[c]{ \\ $\kappa_2$} & \minitab[c]{64 \\ it} & \minitab[c]{ \\ $\kappa_2$} & \minitab[c]{128 \\ it} \\
				\hline
				2 & 2,73 & 8 & 3,51 & 8 & 4,28 & 8 & 5,12 & 9 & 6,32 & 9 \\
				\hline
				4 & x & x & 4,01 & 11 & 5,20 & 12 & 6,54 & 13 & 7,98 & 14 \\
				\hline
				8 & x & x & x & x & 5,01 & 15 & 6,83 & 16 & 8,93 & 18\\
				\hline
				16 & x & x & x & x	& x & x & 5,25 & 15 & 7,28 & 17 \\
				\hline
				32 & x & x & x & x & x & x & x & x & 5,32 & 15 \\
				\hline
			\end{tabular}
		}
	}
	\subtable[CVT meshes.\label{BDDCVNvor}]{%
		\scalebox{0.75}{
			\begin{tabular}{|c|c c| c c | c c | c c| c c |}
				\hline
				\backslashbox{1/H}{1/h} & \minitab[c]{ \\ $\kappa_2$} & \minitab[c]{8 \\ it} & \minitab[c]{ \\ $\kappa_2$} & \minitab[c]{16 \\ it} & \minitab[c]{ \\ $\kappa_2$} & \minitab[c]{32 \\ it} & \minitab[c]{ \\ $\kappa_2$} & \minitab[c]{64 \\ it} & \minitab[c]{ \\ $\kappa_2$} & \minitab[c]{128 \\ it} \\
				\hline
				2 & 5,82 & 14 & 6,97 & 15 & 8,16 & 16 & 9,32 & 16 & 10,23 & 16 \\
				\hline
				4 & x & x & 10,20 & 20 & 13,87 & 21 & 15,98 & 22 & 17,22 & 23 \\
				\hline
				8 & x & x & x & x & 22,24 & 27 & 21,43 & 28 & 23,13 & 28\\
				\hline
				16 & x & x & x & x	& x & x & 30,12 & 34 & 28,34 & 33 \\
				\hline
				32 & x & x & x & x & x & x & x & x & 30,28 & 35 \\
				\hline
			\end{tabular}
		}
	}	
\end{table}

\begin{table}[htbp]
		\caption{PCG iteration counts to solve the interface saddle-point problem with BDDC preconditioner, varying the mesh size $h$ and the number of subdomains $N=1/H^2$. The primal coarse space is spanned by the subdomain vertices and two basis functions per subdomain edge.}\label{BDDCVE}
	\centering
	\subtable[QUAD meshes.\label{BDDCVEqua}]{%
		\scalebox{0.75}{
			\begin{tabular}{|c|c c| c c | c c | c c| c c |}
				\hline
				\backslashbox{1/H}{1/h} & \minitab[c]{ \\ $\kappa_2$} & \minitab[c]{8 \\ it} & \minitab[c]{ \\ $\kappa_2$} & \minitab[c]{16 \\ it} & \minitab[c]{ \\ $\kappa_2$} & \minitab[c]{32 \\ it} & \minitab[c]{ \\ $\kappa_2$} & \minitab[c]{64 \\ it} & \minitab[c]{ \\ $\kappa_2$} & \minitab[c]{128 \\ it} \\
				\hline
				2 & 1,48 & 7 & 1,68 & 7	& 1,90 & 7 & 2,18 & 8 & 2,51 & 7 \\
				\hline
				4 & x & x & 2,80 & 9 & 3,73 & 10 & 4,78 & 10 & 5,93 & 11 \\
				\hline
				8 & x & x & x & x & 2,99 & 10 & 4,05 & 11 & 5,20 & 13\\
				\hline
				16 & x & x & x & x	& x & x & 2,72 & 9 & 3,67 & 10 \\
				\hline
				32 & x & x & x & x & x & x & x & x & 2,64 & 8 \\
				\hline
			\end{tabular}
		}
	} \quad
	\subtable[HEXA meshes.\label{BDDCVEexa}]{%
		\scalebox{0.75}{
			\begin{tabular}{|c|c c| c c | c c | c c| c c |}
				\hline
				\backslashbox{1/H}{1/h} & \minitab[c]{ \\ $\kappa_2$} & \minitab[c]{8 \\ it} & \minitab[c]{ \\ $\kappa_2$} & \minitab[c]{16 \\ it} & \minitab[c]{ \\ $\kappa_2$} & \minitab[c]{32 \\ it} & \minitab[c]{ \\ $\kappa_2$} & \minitab[c]{64 \\ it} & \minitab[c]{ \\ $\kappa_2$} & \minitab[c]{128 \\ it}\\
				\hline
				2 & 3,33 & 9 & 4,29 & 	10	& 5,36 & 10 & 6,68 & 10 & 8.08 & 11\\
				\hline
				4 & x & x & 4,21 & 12 & 5,29 & 13 & 6,58 & 15 & 7,90 & 15\\
				\hline
				8 & x & x & x & x & 4,59 & 13 & 5,90 & 14 & 6,65 & 15\\
				\hline
				16 & x & x & x & x & x & x & 4,79 & 14 & 5,12 & 13\\
				\hline
				32 & x & x & x & x & x & x & x & x & 4,35 & 12\\
				\hline
			\end{tabular}
		}
	} \quad
	\subtable[TRI meshes.\label{BDDCVEtri}]{%
		\scalebox{0.75}{
			\begin{tabular}{|c|c c| c c | c c | c c| c c |}
				\hline
				\backslashbox{1/H}{1/h} & \minitab[c]{ \\ $\kappa_2$} & \minitab[c]{8 \\ it} & \minitab[c]{ \\ $\kappa_2$} & \minitab[c]{16 \\ it} & \minitab[c]{ \\ $\kappa_2$} & \minitab[c]{32 \\ it} & \minitab[c]{ \\ $\kappa_2$} & \minitab[c]{64 \\ it} & \minitab[c]{ \\ $\kappa_2$} & \minitab[c]{128 \\ it} \\
				\hline
				2 & 2,49 & 9 & 3,41 & 9 & 4,38 & 10 & 5,40 & 10 & 6,55 & 10 \\
				\hline
				4 & x & x & 2,96 & 10 & 3,85 & 11 & 5,17 & 12 & 6,36 & 14 \\
				\hline
				8 & x & x & x & x & 3,26 & 10 & 4,26 & 12 & 5,33 & 13\\
				\hline
				16 & x & x & x & x	& x & x & 3,44 & 9 & 4,42 & 11 \\
				\hline
				32 & x & x & x & x & x & x & x & x & 3,49 & 8 \\
				\hline
			\end{tabular}
		}
	}\quad
	\subtable[CVT meshes.\label{BDDCVEvor}]{%
		\scalebox{0.75}{
			\begin{tabular}{|c|c c| c c | c c | c c| c c |}
				\hline
				\backslashbox{1/H}{1/h} & \minitab[c]{ \\ $\kappa_2$} & \minitab[c]{8 \\ it} & \minitab[c]{ \\ $\kappa_2$} & \minitab[c]{16 \\ it} & \minitab[c]{ \\ $\kappa_2$} & \minitab[c]{32 \\ it} & \minitab[c]{ \\ $\kappa_2$} & \minitab[c]{64 \\ it} & \minitab[c]{ \\ $\kappa_2$} & \minitab[c]{128 \\ it} \\
				\hline
				2 & 4,35 & 13 & 5,27 & 14	& 6,63 & 15 & 7,31 & 16 & 8,28 & 16 \\
				\hline
				4 & x & x & 5,20 & 15 & 10,22 & 18 & 13,00 & 19 & 15,63 & 20 \\
				\hline
				8 & x & x & x & x & 9,03 & 20 & 17,52 & 23 & 18,41 & 22\\
				\hline
				16 & x & x & x & x	& x & x & 12,29 & 21 & 19,21 & 24 \\
				\hline
				32 & x & x & x & x & x & x & x & x & 14,43 & 23 \\
				\hline
			\end{tabular}
		}
	}	
\end{table}

In the following tables, we report the number of iterations to solve the global interface saddle-point problem (\ref{globInt}) with the non-preconditioned GMRES method or the PCG method, accelerated by BDDC.
Where possible, we estimate the extreme eigenvalues using the Lanczos trick. Both in case of PCG and GMRES, we set the tolerance for the relative residual error to $10^{-6}$. Note that in the tables we marked with an "x" the numerical tests that we do not have performed because they are not significant.\\
Our tests have been executed on different types of polygonal meshes and using the VEM discretization with degree $k=2$ with the divergence free approach, that means having polynomials of degree 2 on the boundary of each element for the velocity and piecewise constant functions for the pressure. We underline the fact that we would have obtained the same behavior, both in terms of number of iterations and spectral condition number number, also in the case of neglecting the divergence free property, because the interface problem and the preconditioner are exactly the same due to the decomposition technique used in (\ref{discVQ}) and (\ref{discViQi}).

The polygonal meshes considered are quadrilateral (QUAD), hexagonal (HEXA), triangular (TRI) and Voronoi (CVT) (Figure \ref{meshfig}).

Table \ref{GMRES} reports the number of iterations to solve the interface saddle-point problem with the non-preconditioned GMRES. 
As expected, we observe that the iteration counts grow when the number of subdomains increases and the mesh size decreases. 

Table \ref{BDDCV} reports the number of iterations to solve the interface saddle-point problem with
PCG, preconditioned by BDDC, considering as primal constraints only the subdomain vertices.
In this case the solver appears to be scalable, since, moving along the diagonals of the table, the iterations remain bounded when the number of subdomains increase,
and quasi-optimal, since, moving along the rows of the table, the growth of iterations seems logarithmic.
The results also show that the solver suffers more on the Voronoi meshes than on the others. We recall that with this choice of primal constraints the assumption \ref{ass1} is not satisfied, therefore the preconditioned system is not positive definite and we are not able to give an estimate on the eigenvalues.

Table \ref{BDDCVN} reports the spectral condition number of the preconditioned system and the iteration counts to solve the interface problem with the PCG method, preconditioned by BDDC, where the primal constraints are the subdomain vertices and one basis function for each subdomain edge.
In this case both the assumptions are satisfied, therefore the system is symmetric and positive definite and we are able to give an estimate of the eigenvalues. 
The results confirm the theoretical estimates, since both the condition number and the number of iterations are independent of number of subdomains (scalability) and exhibit a logarithmic growth with respect to the ratio $H/h$ (quasi-optimality). 

Table \ref{BDDCVE} reports the spectral condition number of the preconditioned system and the iteration counts to solve the interface problem with the PCG method, preconditioned by BDDC, where the primal constraints are the subdomain vertices and two basis functions for each subdomain edge.
In this case the system is again symmetric and positive definite, thus we are able to give an estimate of the eigenvalues. 
Both the condition number and the iteration counts exhibit a scalable and quasi-optimal behavior as before, but in this case the convergence is faster since the coarse problem is slightly larger.

We recall that our code is implemented in Matlab and the tests were performed in serial, therefore we 
do not provide an analysis on the time of computations.
\begin{figure}[!t]
	\centering
	\subfigure[The primal space consists of the subdomain vertices and two basis functions per subdomain edge. The number of subdomains is fixed to $N=16$.]{
		\includegraphics[width=5.5cm]{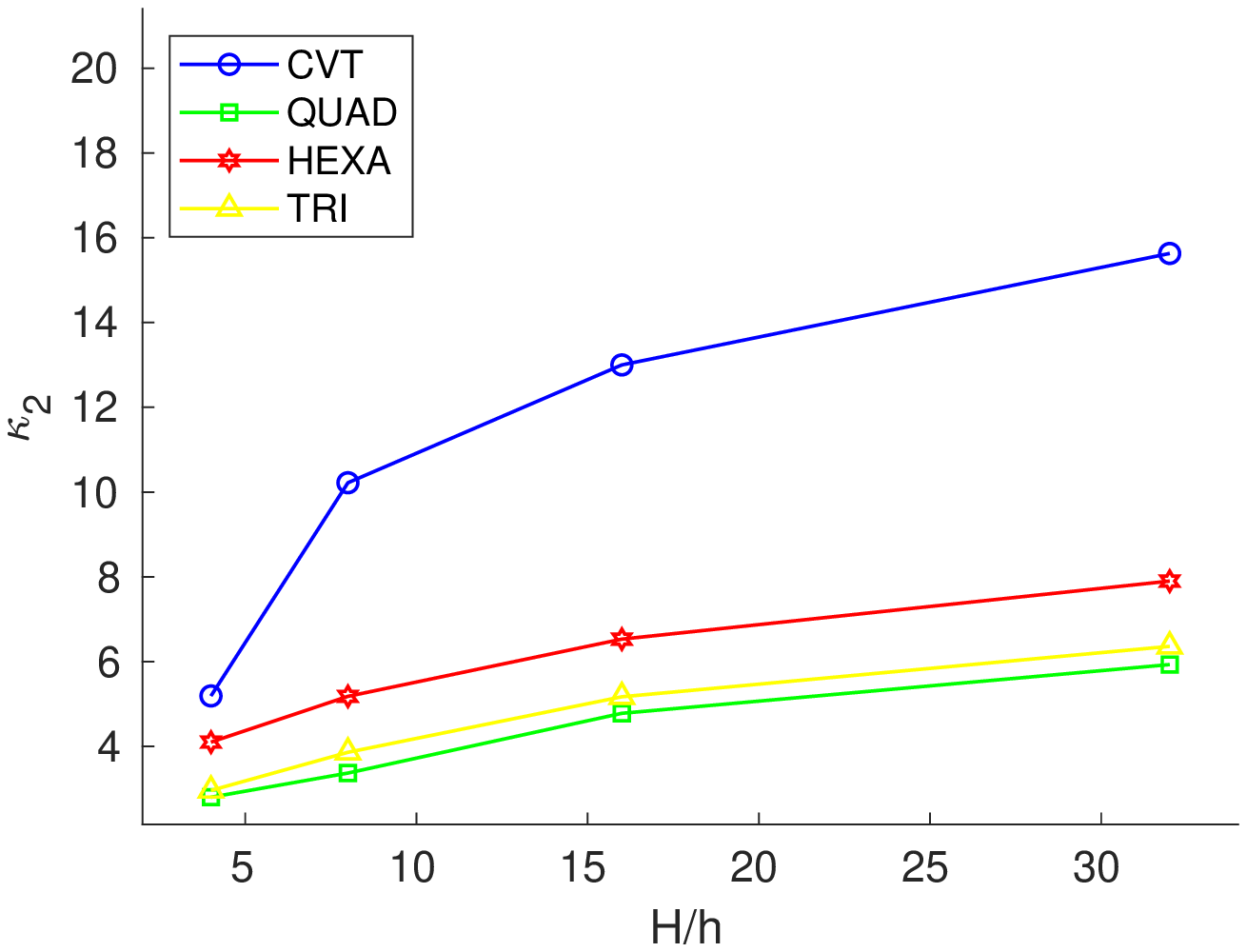}
	}
	\hspace{0.2cm} \subfigure[The primal space consists of the subdomain vertices and two basis functions per subdomain edge. The ratio $H/h$ is fixed to 4.]{
		\includegraphics[width=5.5cm]{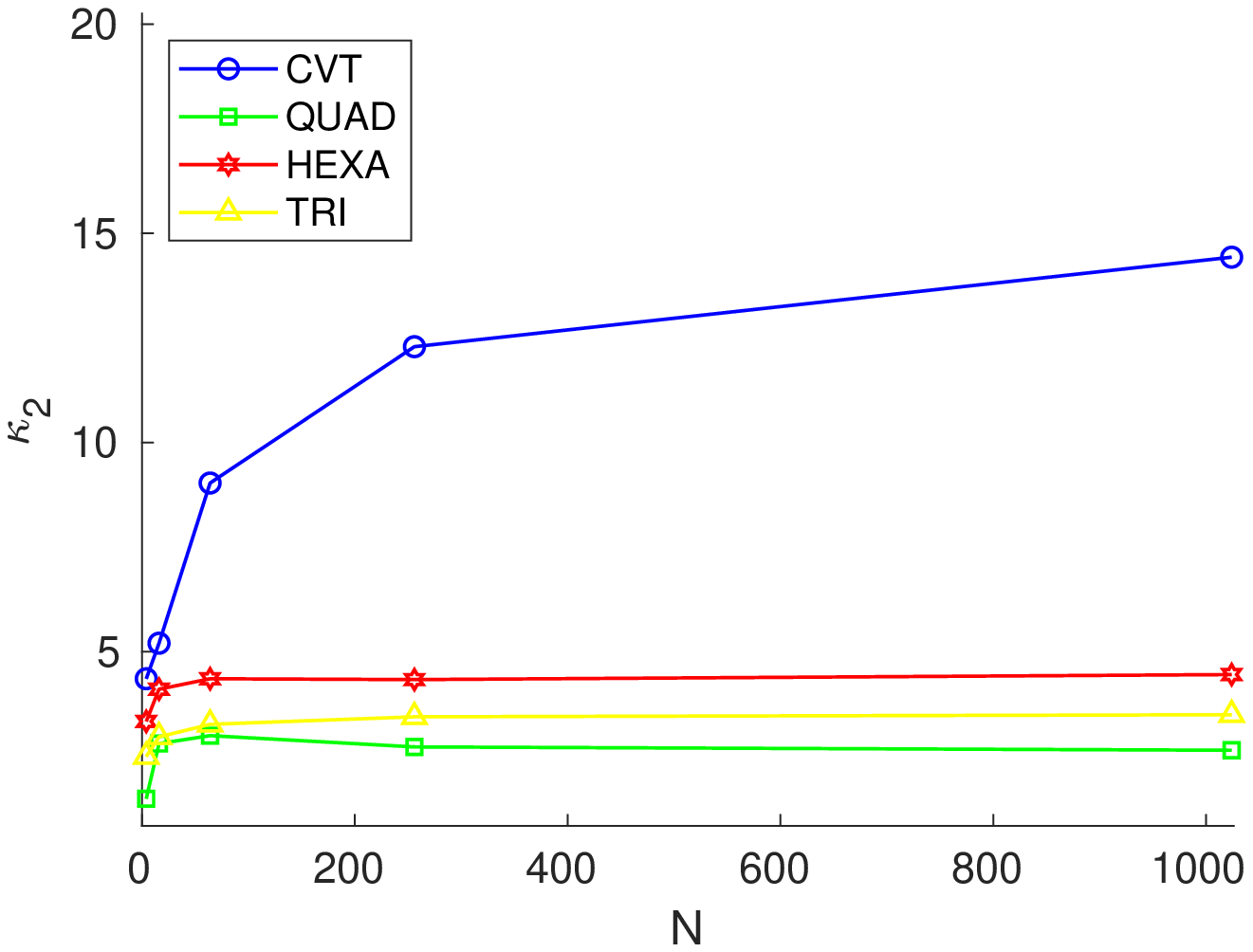}
	}
	\\
	\subfigure[The primal space consists of the subdomain vertices and one basis function per subdomain edge. The number of subdomains is fixed to $N=16$.]{
		\includegraphics[width=5.5cm]{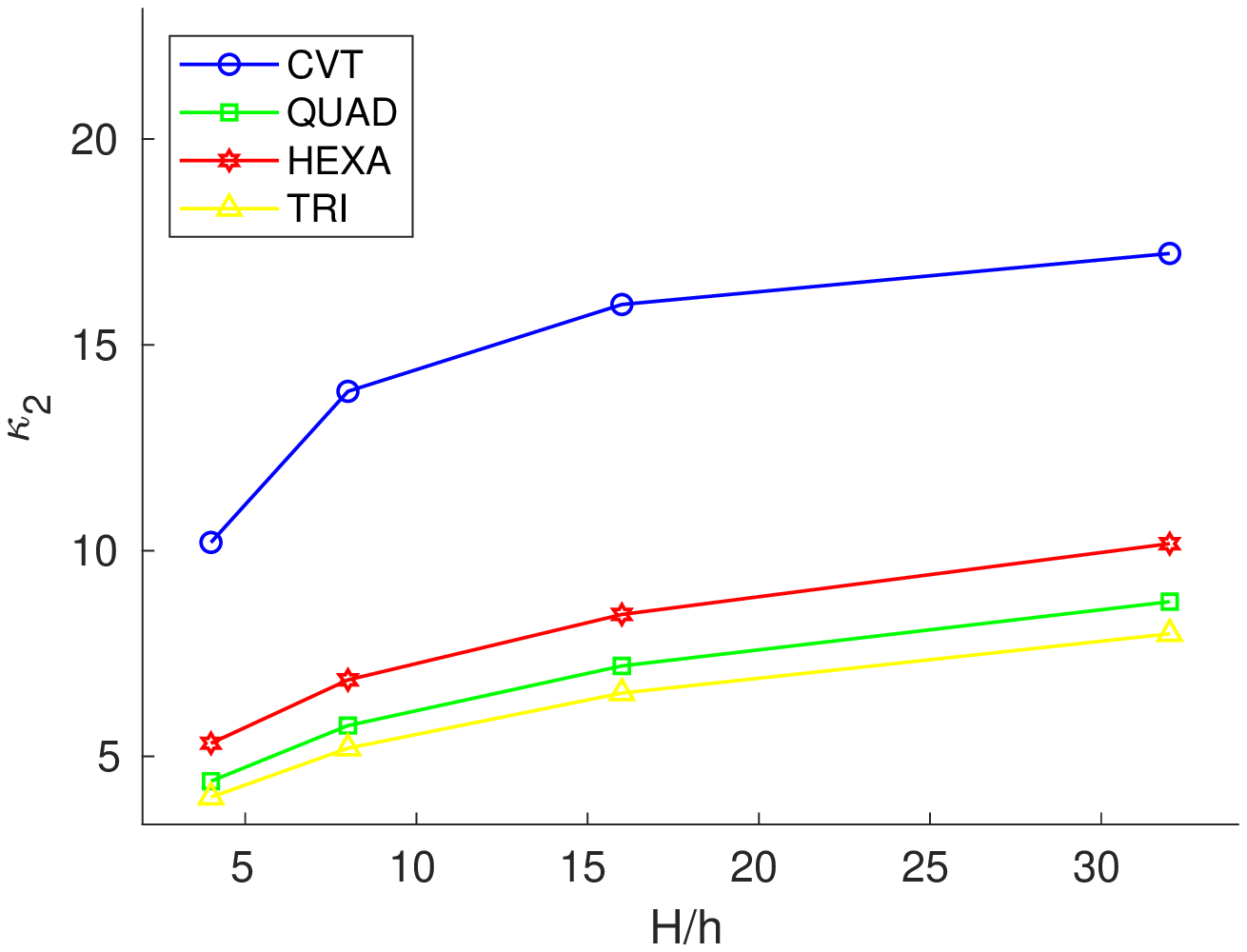}
	}
	\hspace{0.2cm} \subfigure[The primal space consists of the subdomain vertices and one basis function per subdomain edge. The ratio $H/h$ is fixed to 4.]{
		\includegraphics[width=5.5cm]{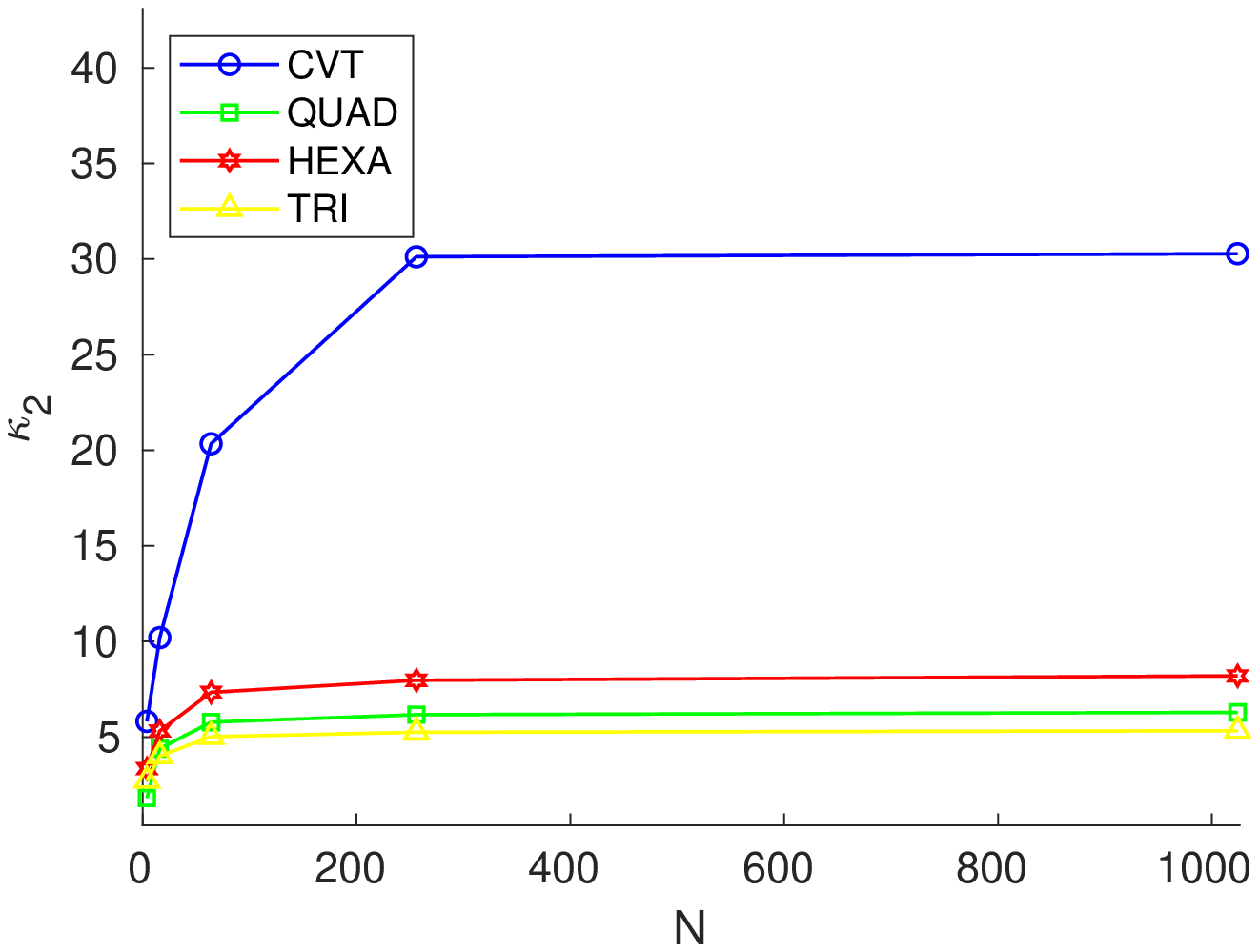}
	}
	\caption{Plots of the spectral condition number ($\kappa_2$) of the BDDC preconditioned linear system as a function of the ratio $H/h$ (left) and of the number of subdomains $N$ (right) for different types of mesh and choices of the primal space.}
	\label{k2fig}
\end{figure}

\begin{figure}[!t]	
	\centering
	\subfigure[QUAD meshes, 16 subdomains.]{
		\includegraphics[width=5.5cm]{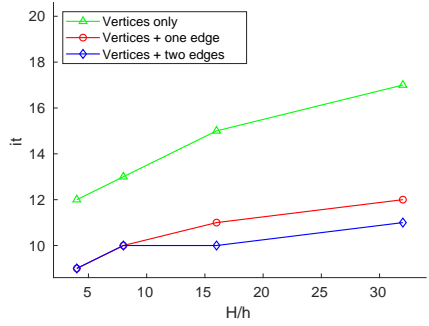}
	}
	\hspace{0.2cm} \subfigure[QUAD meshes, fixed local size (H/h=4).]{
		\includegraphics[width=5.5cm]{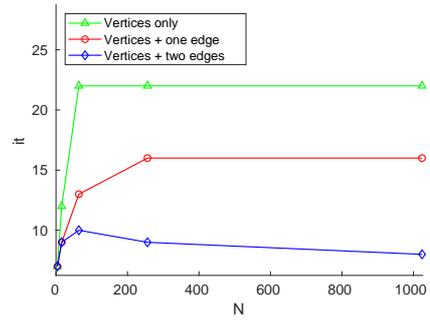}
	}\\
	\subfigure[HEXA meshes, 16 subdomains.]{
		\includegraphics[width=5.5cm]{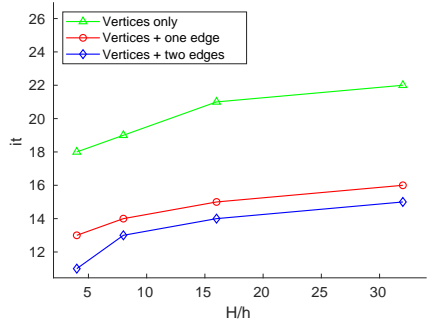}
	}
	\hspace{0.2cm} \subfigure[HEXA meshes, fixed local size (H/h=4).]{
		\includegraphics[width=5.5cm]{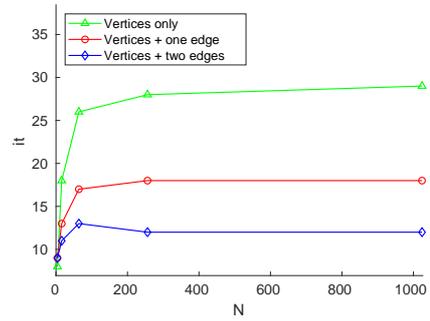}
	}
	\caption{Plots of the PCG iteration counts of the BDDC preconditioner for different choices of primal space on quadrilateral (QUAD) and hexagonal (HEXA) meshes.}
	\label{itfig}
\end{figure}

\begin{figure}[!t]
	\subfigure[TRI meshes, 16 subdomains.]{
		\includegraphics[width=5.5cm]{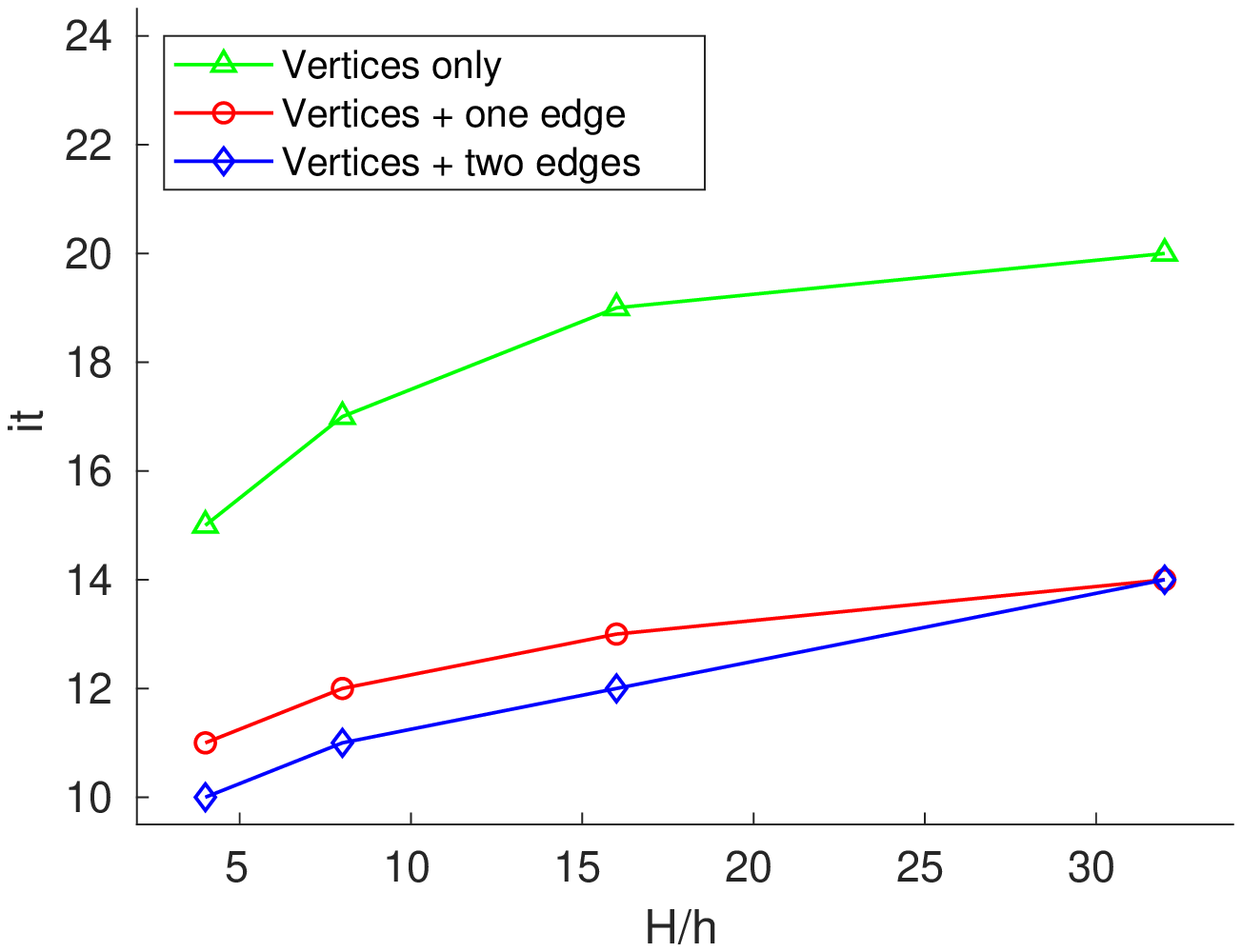}
	}
	\hspace{0.2cm} \subfigure[TRI meshes, fixed local size (H/h=4).]{
		\includegraphics[width=5.5cm]{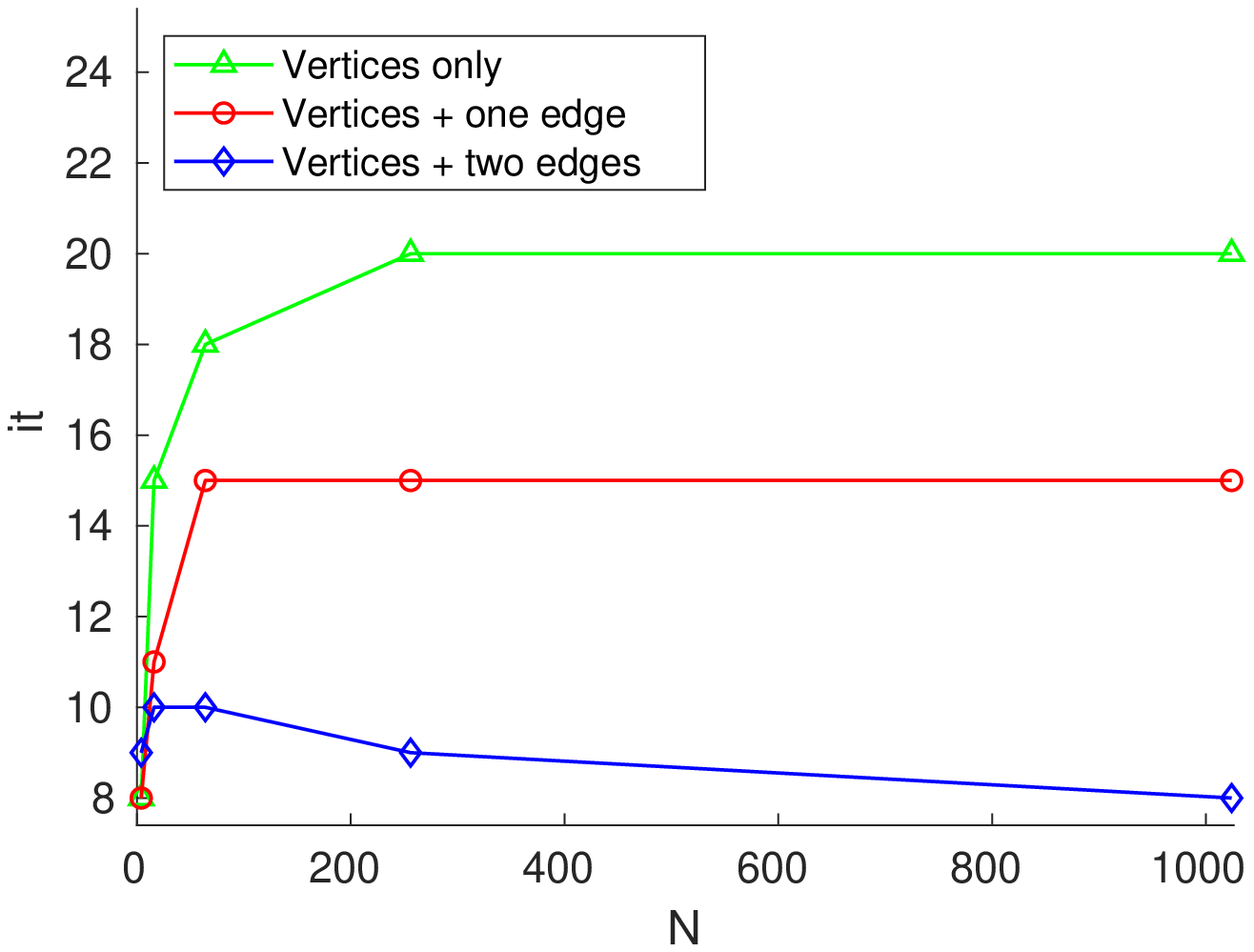}
	}
	\\
	\subfigure[CVT meshes, 16 subdomains.]{
		\includegraphics[width=5.5cm]{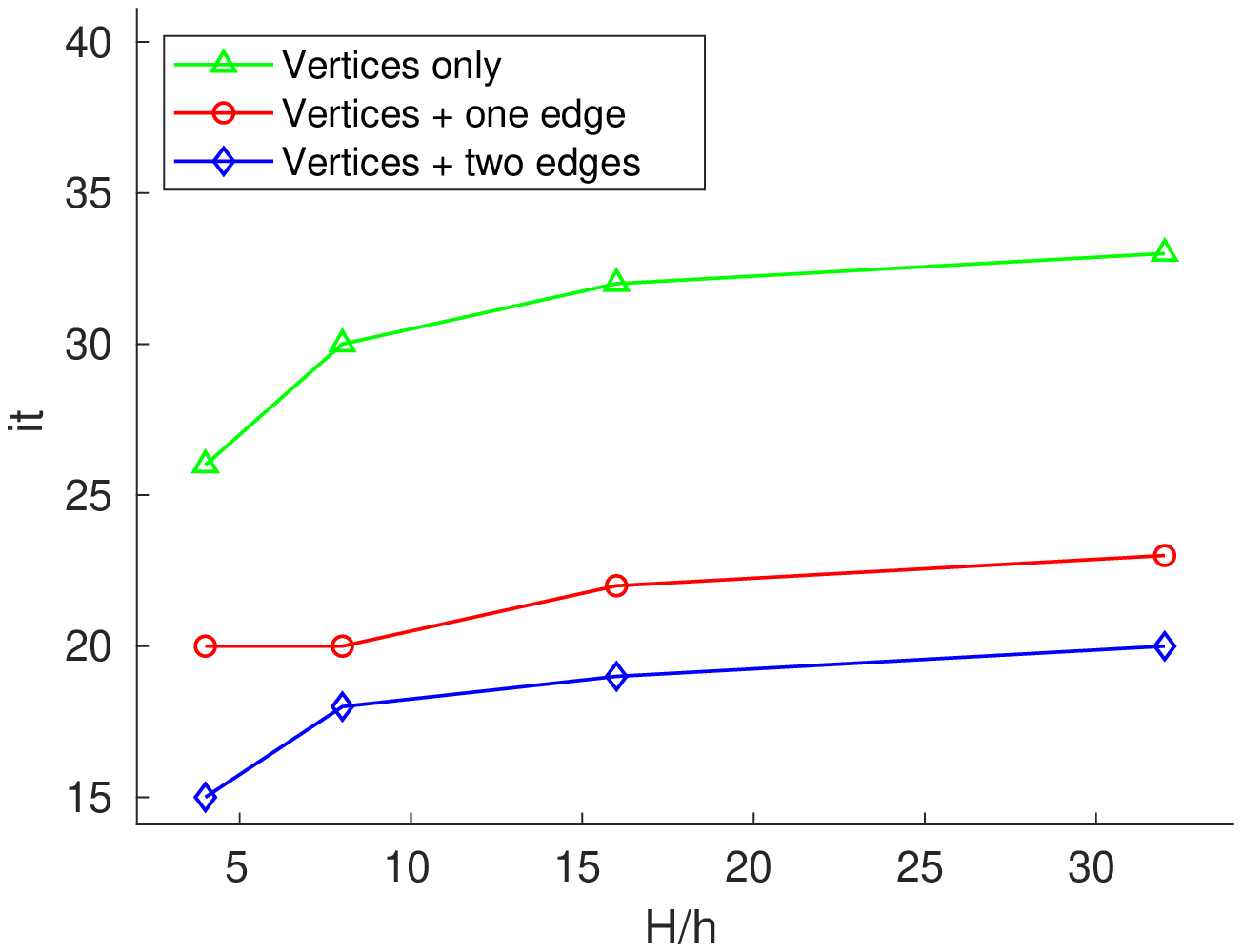}
	}
	\hspace{0.2cm} \subfigure[CVT meshes, fixed local size (H/h=4).]{
		\includegraphics[width=5.5cm]{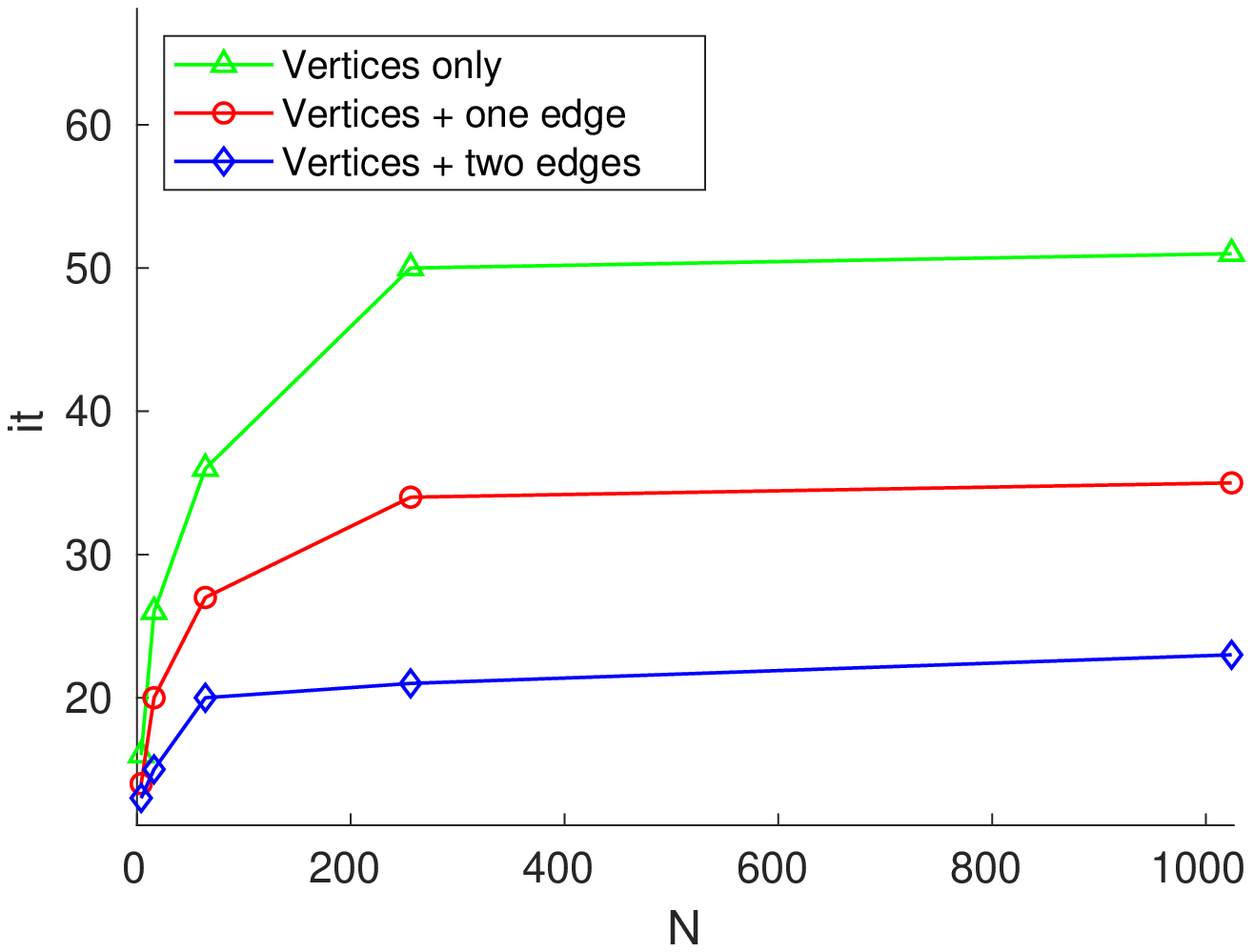}
	}
	\caption{Plots of the PCG iteration counts of the BDDC preconditioner for different choices of primal space on triangular (TRI) and Voronoi (CVT) meshes.}
	\label{itfig_2}
\end{figure}

In Figure \ref{k2fig}, we plot the spectral condition number of the BDDC preconditioner with the two different choices of primal constraints that satisfy the assumptions.
The left column displays an optimality test, fixing at 16 the number of subdomains and increasing the ratio $H/h$. We observe the logarithmic growth of the condition number.
The right column displays a weak scalability test, fixing the ratio $H/h=4$  and increasing the number of subdomains. In this case we see that the condition number remains bounded when the number of subdomains increases. We observe a worse behavior for the Voronoi meshes due to the fact that the boundary of the subdomains are quite irregular.

Finally, in Figures \ref{itfig} and \ref{itfig_2}, we plot the PCG iteration counts of the BDDC preconditioner for different choices of primal constraints and meshes.
The left column reports an optimality test with 16 subdomains and we observe that the logarithmic growth is respected, with a smaller number of iterations when the coarse space is enriched.
The right column displays the number of PCG iterations for a fixed local problem size ($H/h=4$) and we observe that the number of iterations remains bounded when the number of subdomains increase, again with a smaller number of iterations for richer primal spaces.

\section{Conclusions}
\label{sec:11}
In this work, we have analyzed BDDC preconditioners to solve the saddle-point linear system deriving from a divergence free VEM discretization of the steady two-dimensional Stokes equations. The numerical tests have validated the convergence estimates, showing the scalability and quasi-optimality of the algorithm, under appropriate choices of the primal coarse space. We have also obtained a better behavior and a faster convergence of the method for an enriched primal space, easy to implement.

\section{Acknowledgements}
The Authors are grateful to INdAM-GNCS for the support, we are also grateful to Giuseppe Vacca who provided us the initial code for the VEM discretization of the Stokes system.

\section*{Data Availability Statement}
The numerical experiments have been performed using an in-house matlab code available upon request to the authors.

%
\section*{Conflict of interest}

The authors declare that they have no conflict of interest.

\bibliographystyle{spmpsci}      
\bibliography{literature}   

%
%

\end{document}